\newtheorem{theorem}{Theorem}[section]
\newtheorem*{theorem*}{Theorem}
\newtheorem{lemma}[theorem]{Lemma}
\newtheorem{conjecture}[theorem]{Conjecture}
\theoremstyle{definition}
\theoremstyle{remark}
\numberwithin{equation}{section}
\def\bfa{{\mathbf a}}
\def\bfb{{\mathbf b}}
\def\bfc{{\mathbf c}}
\def\bfd{{\mathbf d}}
\def\bfm{{\mathbf m}}
\def\bfx{{\mathbf x}}
\def\bfy{{\mathbf y}}
\def\bfz{{\mathbf z}}
\def\calA{\mathscr{A}}   
\def\calB{\mathscr{B}} 
\def\calC{\mathscr{C}}
\def\calE{{\mathcal E}}
\def\calL{\mathscr{L}}
\def\atil{\widetilde a} \def\btil{\widetilde b} \def\ctil{\widetilde c}
\def\dtil{\widetilde{d}} 
\def\Ftil{\widetilde F} \def\Gtil{\widetilde G} \def\Htil{\widetilde H}
\def\ltil{\widetilde l} \def\mtil{\widetilde m}
\def\ntil{\widetilde n} \def\Ntil{\widetilde N}
\def\dbC{{\mathbb C}}\def\dbF{{\mathbb F}}\def\dbN{{\mathbb N}}
\def\dbR{{\mathbb R}}
\def\dbZ{{\mathbb Z}}
\def\grB{{\mathfrak B}}
\def\grC{{\mathfrak C}}
\def\grD{{\mathfrak D}}
\def\grg{\mathfrak g} 
\def\grJ{{\mathfrak J}}
\def\grm{{\mathfrak m}}\def\grM{{\mathfrak M}}\def\grN{{\mathfrak N}}
\def\grn{{\mathfrak n}}\def\grS{{\mathfrak S}}
\def\grB{{\mathfrak B}}\def\grC{{\mathfrak C}}
\def\grk{{\mathfrak k}}\def\grK{{\mathfrak K}}
 \def\grX{{\mathfrak X}}
\def\alp{{\alpha}} 
\def\bet{{\beta}}  
\def\gam{{\gamma}} \def\Gam{{\Gamma}}
\def\del{{\delta}} \def\Del{{\Delta}}
\def\zet{{\zeta}} \def\bfzet{{\boldsymbol \zeta}} \def\Zet{{\rm Z}} \def\Zettil{{\widetilde \Zet}}
\def\bfeta{{\boldsymbol \eta}}
\def\tet{{\theta}}  
\def\lam{{\lambda}}   \def\bflam{{\boldsymbol \lambda}}
\def\bfxi{{\boldsymbol \xi}} \def\Xitil{\widetilde \Xi}
\def\Ups{{\Upsilon}} 
\def\ome{{\omega}} \def\Ome{{\Omega}}
\def\d{{\partial}}
\def\eps{\varepsilon}
\def\le{\leqslant} \def\ge{\geqslant}
\def\d{{\,{\rm d}}}
\begin{document}
\title[Subconvexity for additive equations]{Subconvexity for additive equations:\\ pairs of undenary cubic forms}
\author[J\"org Br\"udern]{J\"org Br\"udern}
\address{JB: Mathematisches Institut, Bunsenstrasse 3--5, D-37073 G\"ottingen, Germany}
\email{bruedern@uni-math.gwdg.de}
\author[Trevor D. Wooley]{Trevor D. Wooley$^*$}
\address{TDW: School of Mathematics, University of Bristol, University Walk, Clifton, Bristol BS8 1TW, United Kingdom}
\email{matdw@bristol.ac.uk}
\thanks{$^*$Supported by a Royal Society Wolfson Research Merit Award.}
\subjclass[2010]{11D72, 11P55}
\keywords{Diophantine equations, Hardy-Littlewood method}
\date{}
\begin{abstract} We investigate pairs of diagonal cubic equations with integral coefficients. For a class of such Diophantine systems with $11$ or more variables, we are able to establish that the number of integral solutions in a large box is at least as large as the expected order of magnitude.\end{abstract}
\maketitle

\section{Introduction} The convexity barrier in the Hardy-Littlewood method presents an apparently insurmountable obstacle to the analysis of Diophantine systems in which the underlying number of variables is smaller than twice the total degree of the system. As is well-known, this obstruction arises from the relative sizes of the product of local densities associated with the system, and the square-root of the available reservoir of variables that is a limiting feature of associated exponential sum estimates. In this paper, we establish a lower bound of the anticipated magnitude for the number of integral zeros of certain pairs of diagonal cubic forms in $11$ variables, thereby breaking this convexity barrier.\par

In order to introduce the Diophantine systems central to our discussion, take $l$, $m$, $n$ to be non-negative integers with $m\ge n$, and fix non-zero integers $a_i$, $b_i$, $c_j$, $d_k$, where $1\le i\le l$, $1\le j\le m$ and $1\le k\le n$. We then define $N(B)$ to be the number of integral solutions to the system
\begin{equation}\label{1.1}
\left.\begin{aligned}
&a_1x_1^3+\ldots +a_lx_l^3+c_1y_1^3+\ldots +c_my_m^3&&=0,\\
&b_1x_1^3+\ldots +b_lx_l^3&+d_1z_1^3+\ldots +d_nz_n^3&=0,
\end{aligned}\, \right\}
\end{equation}
with $x_i,y_j,z_k\in [-B,B]$. Associated to the system (\ref{1.1}) are the total number of variables $s=l+m+n$, and a measure of the minimal number of variables across equations $q_0^*=\min\{s-l,s-m,s-n\}$. Before announcing the principal conclusion of this paper, we direct the reader to \S6 for a description of Hooley's Riemann Hypothesis (which we call HRH).

\begin{theorem}\label{theorem1.1} Let $s\ge 11$ and $q_0^*\ge 7$, and suppose that the system $(\ref{1.1})$ admits non-singular $p$-adic solutions for each prime $p$. Then, with the possible exception of the case $(l,m,n)=(5,5,2)$, one has $N(B)\gg B^{s-6}$. In the latter exceptional case one recovers the same conclusion by appealing to HRH. 
\end{theorem}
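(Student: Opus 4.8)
The plan is to attack the system (\ref{1.1}) by the Hardy-Littlewood (circle) method in two variables, writing
\begin{equation*}
N(B)=\int_{[0,1)^2}F(\bfalp)\,\d\bfalp,
\end{equation*}
where $F(\bfalp)=\prod_{i=1}^l f(a_i\alp+b_i\bet)\prod_{j=1}^m f(c_j\alp)\prod_{k=1}^n f(d_k\bet)$ and $f(\tet)=\sum_{\abs{x}\le B}e(\tet x^3)$. The anticipated main term of order $B^{s-6}$ comes from the major arcs, where a standard pruning argument together with the hypothesis of non-singular $p$-adic (and real) solvability delivers a positive product of local densities; since $s\ge 11>2\cdot 3$, the singular series and singular integral converge absolutely, so this part is routine. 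The entire difficulty is the minor arc contribution, and here the hypotheses $s\ge 11$ and $q_0^*\ge 7$ are exactly what one has to exploit: the convexity barrier says that a naive estimate $\abs{F(\bfalp)}\le B^{\abs{\text{(something)}}}$ loses, so one must instead argue via mean values.

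First I would fix the shape $(l,m,n)$; by the symmetry $m\ge n$ and the constraints $s=l+m+n\ge 11$, $q_0^*=\min\{m+n,\,l+n,\,l+m\}\ge 7$, there are only finitely many admissible triples to consider, and I would enumerate them (e.g.\ $(5,5,2)$, $(4,5,3)$, $(5,4,3)$, $(4,4,4)$, and those with larger $s$). For each triple the strategy is to split the $11$ (or more) variables into two blocks and apply H\"older's inequality so as to reduce matters to (i) an eighth-moment or sixth-moment estimate for a single cubic Weyl sum over a minor-arc set, of the strength furnished by Vaughan's work and its refinements (here the diagonal behaviour of $f$ on minor arcs, i.e.\ $\sup_{\bfalp\in\grm}\abs{f(\cdot)}\ll B^{3/4+\eps}$, enters), and (ii) a mean value over the full square $[0,1)^2$ of a product of the remaining sums, which by orthogonality counts solutions of an auxiliary pair of diagonal cubic equations in few variables and is bounded by its own (smaller) circle-method heuristic. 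The key auxiliary input is a good estimate for $\int_0^1\abs{f(\tet)}^t\,\d\tet$ for $t$ slightly below $8$, and for the two-dimensional mean value $\int_{[0,1)^2}\prod\abs{f(a_i\alp+b_i\bet)}\,\d\bfalp$; these are precisely the tools developed in the sections preceding this theorem (the quoted diagonal behaviour, Weyl-type and Hua-type inequalities, and the two-dimensional analogue).

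The main obstacle, and the reason for the exceptional case, is the balance in Step (ii) for the triple $(l,m,n)=(5,5,2)$: there the pair of equations shares $l=5$ common variables $x_i$ and the two "tails" have $m=5$ and $n=2$ variables respectively, so after any H\"older split one is left needing a mean-value estimate that sits exactly on the critical line where the number of variables equals twice the degree, and the unconditional bounds for the relevant two-dimensional (or diagonal) mean value fall just short of what is needed to beat $B^{s-6}$. To close this last case one invokes HRH, which supplies the extra saving in the relevant Kloosterman-type or divisor-type sum (as described in \S6) and tips the estimate over the threshold. For all other admissible triples the slack provided by the more favourable distribution of variables among the equations—quantified by $q_0^*\ge 7$—means the unconditional mean-value estimates already suffice, and one assembles the minor-arc bound $o(B^{s-6})$ and hence the theorem. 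I expect the genuinely delicate point to be the bookkeeping in the H\"older exponents: one must choose the split of variables and the exponents so that \emph{both} factors (i) and (ii) are simultaneously admissible, and for the borderline triples this leaves essentially no room, which is exactly why a new idea (the two-dimensional mean value, or HRH) is forced.
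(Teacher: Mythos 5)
Your outline describes the conventional two-dimensional circle method — split the eleven variables by H\"older, estimate one block by a sixth/eighth-moment minor-arc bound and the other by a mean value over $[0,1)^2$ — but this is precisely the approach that the convexity barrier rules out when $s=11$. After saving a factor $B^{1/4}$ from Weyl's inequality on one variable, you are left with a two-dimensional mean value of ten cubic exponential sums, and by Parseval/Bessel that mean value is $\gg B^{5}$ (the square root of the reservoir), so the product is at best $B^{5-1/4}\cdot B^{?}$ with no room to spare; in fact the mean value cannot be pushed below $B^5$ at all. This is the obstruction the paper is built to overcome, and your sketch does not contain the idea that does it.

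The two ingredients you are missing are these. First, the paper does not work with the classical Weyl sum $f$ but with the smooth sums $g$ and $h$, because the mean value $\int_0^1|g(a\tet)^2h(b\tet)^4|\,\d\tet\ll B^{13/4-\tau_1}$ of \cite{Woo2000} (equation (2.1)) is genuinely below the classical fourth-moment threshold $B^{13/4}$; this sub-threshold saving propagates through Lemma 2.4 and is what makes all the minor-arc estimates in \S\S3--5 close. Second, and more importantly, the paper does not bound $N(B;\grm\times\grm)$ directly by a mean value. Instead it introduces the representation function $\rho(u,v)$ for the common block of $l$ variables, writes $N(B;\grC\times\grD)=\sum\rho(u,v)R_1(u;\grC)R_2(v;\grD)$, and \emph{truncates}: it discards the set $\grX_1\cup\grX_2$ where $\rho_1(u)$ or $\rho_2(v)$ exceeds a threshold $T$, bounds that discarded contribution on the major arcs only (Lemmata 3.2 and 3.3), and then on the remaining set $\grX_0$ applies Cauchy--Schwarz together with Bessel's inequality so that the loss is only $T^{1/2}$ per factor, not the full mean value (Lemmata 3.4 and 3.5). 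It is exactly this ``arithmetic smoothing'' that breaks the convexity barrier — one pays $T\ll B^{l-11/4}$ instead of the $L^\infty$-type loss $B^{l-2}$. Your sketch has neither step, and without some device of this kind the bookkeeping you allude to for the borderline triples cannot be made to close. Finally, HRH enters for $(l,m,n)=(5,5,2)$ not via a Kloosterman-type estimate but via Hooley's conditional sixth-moment bound $\int_0^1|f|^6\,\d\tet\ll B^{3+\eps}$ (Lemma 6.2), which supplies the missing mean value for the short block $n=2$.
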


Examples allied to the system
\begin{equation*}
\left.\begin{aligned}
&49(x_1^3+2x_2^3+3x_3^3)+y_1^3+2y_2^3+7y_3^3+14y_4^3&&=0,\\
&49(x_1^3+4x_2^3+4x_3^3)&+z_1^3+2z_2^3+7z_3^3+14z_4^3&=0,
\end{aligned}\, \right\}
\end{equation*}
demonstrate that the $p$-adic solubility hypothesis in the theorem is required, as the reader may easily verify.\par

The conclusion of Theorem \ref{theorem1.1} establishes the Hasse Principle for those systems (\ref{1.1}) with $s\ge 11$ and $q_0^*\ge 7$, and indeed an appropriate modification of our methods would confirm the weak approximation property for the same systems. In view of the convexity barrier, the cases in which $s=11$ are of particular interest. We note that when $s\ge 13$, the conclusion of Theorem \ref{1.1} follows from our previous work \cite{BW2007a, BW2007b} concerning pairs of diagonal cubic forms (in particular, see \cite[Theorem 2]{BW2007b}). When $s\ge 11$, moreover, the special case in which $\bfa$ and $\bfb$ are in rational ratio is covered by \cite[Theorem 10]{BW2007b}. Previous results on pairs of diagonal cubic equations, meanwhile, apply only to systems having $14$ or more variables (see, in chronological order, the references \cite{DL1966, Coo1972, Vau1977, BB1988, Bru1990}).\par

When $s\ge 12$, which is the threshold of the convexity barrier, we are able to refine the asymptotic lower bound of Theorem \ref{theorem1.1}. In this context, it is useful to introduce the product of local densities associated with the system (\ref{1.1}). The latter we define by $\calC=v_\infty \prod_p v_p$, in which $v_\infty$ is the area of the manifold defined by (\ref{1.1}) in the box $[-1, 1]^s$, and for each prime number $p$ one takes
$$v_p=\lim_{h\rightarrow \infty}p^{h(2-s)}M(p^h),$$
where $M(q)$ denotes the number of solutions of (\ref{1.1}) with $\bfx\in (\dbZ/q\dbZ)^s$.

\begin{theorem}\label{theorem1.2} When $s\ge 12$ and $q_0^*\ge 8$, one has $N(B)\ge (\calC+o(1))B^{s-6}$. 
\end{theorem}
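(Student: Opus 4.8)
The plan is to run the Hardy–Littlewood circle method with the two generating exponential sums, but to design the major-arc dissection around the structure of the system rather than a single equation. Write $f(\alp;\bfc)=\sum_{|x|\le B}e(\alp c x^3)$ for the relevant univariate cubic Weyl sums, and form the product generating function encoding the two linear forms in \eqref{1.1}, integrated over $(\alp,\bet)\in[0,1)^2$. With $s\ge 12$ one has enough variables that the complete singular series and singular integral converge absolutely and jointly factor, so the target is to show that the major-arc contribution is $(\calC+o(1))B^{s-6}$ while the minor-arc contribution is $o(B^{s-6})$. The first step is therefore the standard major-arc analysis: on arcs $\grM$ of the usual shape one replaces each $f$ by its expected approximation, and a pruning argument (thinning the arcs, cf.\ the treatment in \cite{BW2007b}) delivers the product $v_\infty\prod_p v_p\cdot B^{s-6}$ exactly, with the $o(1)$ absorbing the pruning error. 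This part is routine given $q_0^*\ge 8$, which guarantees that even the equation with fewest variables has at least $8$ of them and hence enough mean-value input.

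The substantive work is the minor-arc estimate, and here the point is to avoid the crude bound that would cost a factor $B^{1/2}$ and reinstate the convexity barrier. First I would split the minor arcs according to which of $\alp$, $\bet$, or a rational combination $\lam\alp+\mu\bet$ is well-approximable: when \emph{some} linear combination of the two dual variables lies on a genuine minor arc for a cubic Weyl sum, one brings in Weyl's inequality (or Vaughan's version, or the Hua-type mean value $\int_0^1|f(\alp)|^4\,d\alp\ll B^{2+\eps}$) to extract a small power saving on the associated block of variables, and then uses a trivial bound together with an eighth-moment or twelfth-moment estimate on the remaining variables. The bookkeeping must be arranged so that the worst partition of the $s\ge 12$ variables between the two equations — which, given $q_0^*\ge 8$, is the case $(l,m,n)=(8,0,0)$-type concentration or the balanced split — still leaves strictly more than $2\cdot3=6$ "effective" variables after the square-root-barrier losses are tallied. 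I expect this to be the main obstacle: one needs a mean-value estimate of the strength
$$\int_{\grm}|f(\alp;\bfc_1)\cdots f(\alp;\bfc_r)f(\bet;\bfd_1)\cdots f(\bet;\bfd_t)|\,d\alp\,d\bet=o(B^{r+t-6}),$$
and closing this requires either Hooley's delta-method-type input or the sharpest available pairs-of-cubics mean values from \cite{BW2007a,BW2007b}, which is precisely why the analogous theorem with $s=11$ in Theorem \ref{theorem1.1} is harder and in one case needs HRH.

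Finally I would assemble the pieces: major arcs give $(\calC+o(1))B^{s-6}$, minor arcs give $o(B^{s-6})$, and since the count $N(B)$ of integral solutions is exactly the full integral, one concludes $N(B)\ge(\calC+o(1))B^{s-6}$. (The inequality rather than asymptotic equality is stated because the minor-arc treatment only yields an upper bound of the right order there, not an asymptotic; equivalently, one is discarding a non-negative minor-arc remainder after the pruning.) The $p$-adic solubility hypothesis of Theorem \ref{theorem1.1} is not needed separately here, since for $s\ge 12$ with $q_0^*\ge 8$ the local densities $v_p$ are automatically positive by a standard Hensel-lifting count, so $\calC>0$ unless it vanishes through the archimedean factor, in which case the asserted lower bound is vacuous.
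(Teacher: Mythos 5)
Your major-arc outline and the observation that the inequality comes from discarding a non-negative remainder are both correct, but the central device of the paper is missing, and without it the argument does not close at $s=12$. You propose to bound the full minor-arc integral directly via Weyl's inequality plus high moments, and you even write the target estimate as $\iint_\grm|f(\alp;\bfc_1)\cdots f(\bet;\bfd_t)|\,d\alp\,d\bet=o(B^{r+t-6})$, which omits the cross-block factors $f(a_i\alp+b_i\bet)$ entirely and in any case is not achievable at this number of variables. For $(l,m,n)=(4,4,4)$, every arrangement of the available inputs --- Hua's lemma, Boklan's minor-arc eighth moment $\int_\grm|f|^8\ll B^5L^{\eps-3}$, the Weyl bound $\sup_\grm|f|\ll B^{3/4+\eps}$ --- leaves a positive power of $B$ uncovered. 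That is exactly the convexity barrier, and it is why a straight circle-method treatment only reaches $s\ge 14$ (as in \cite{BW2011}). You sense this in your penultimate paragraph, but do not supply the mechanism that overcomes it.

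What the paper actually does is an arithmetic smoothing of the $\bfx$-block. Introduce $\rho(u,v)$ counting solutions of $\sum a_iy_i^3=u$, $\sum b_iy_i^3=v$ in the first block, with marginals $\rho_1(u)=\sum_v\rho(u,v)$ and $\rho_2(v)=\sum_u\rho(u,v)$, and write
$N(B;\grC\times\grD)=\sum_{u,v}\rho(u,v)R_1(u;\grC)R_2(v;\grD)$
with $R_1,R_2$ the Fourier coefficients of $G$, $H$ over $\grC$, $\grD$. The key step is to \emph{truncate}: restrict the sum to the set $\grX_0$ of $(u,v)$ with $\rho_1(u)\le T$ and $\rho_2(v)\le T$ for a truncation parameter $T\asymp B^{l-3}L$. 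The resulting $N_0$ is still a lower bound for $N(B)$, since $\rho$, $R_1(\cdot;[0,1))$ and $R_2(\cdot;[0,1))$ are non-negative. The dividend is that one can now apply Cauchy's inequality and then Bessel's inequality to obtain, e.g.,
$N_0(B;\grm,[0,1))\le T\bigl(\int_\grm|G(\alp)|^2\d\alp\bigr)^{1/2}\bigl(\int_0^1|H(\bet)|^2\d\bet\bigr)^{1/2}$,
which the eighth-moment inputs do render $o(B^{s-6})$. The discarded sets $\grX_1$, $\grX_2$ (where $\rho_1$ or $\rho_2$ is large) carry little mass, controlled by moment bounds such as $\sum_u\rho_1(u)^2$ coming from Lemma \ref{lemma7.4}. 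This truncation-and-Bessel machinery is the substance of the theorem and is absent from your plan. Finally, your assertion that $q_0^*\ge 8$ makes the local densities automatically positive is incorrect --- pairs of diagonal cubics in $12$ or $13$ variables can certainly have $p$-adic obstructions; the theorem simply does not need a solubility hypothesis because the conclusion becomes vacuous when $\calC=0$.
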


For comparison, our earlier work \cite{BW2011} establishes a conclusion which implies Theorem \ref{theorem1.2} when $s\ge 14$ (see \cite[Theorem 1.1]{BW2011}).\par

Thus far we have discussed only problems involving simultaneous cubic equations, but for problems of very low degree alternative approaches may be applicable. Thus, for systems of linear equations, one has recent work of Green and Tao \cite{GT2010} for prime numbers, and work of the first author \cite{Bru2009} for limit periodic sequences. It is worth remarking also that the Kloosterman method provides conclusions on the edge of subconvexity for problems of quadratic type (for some of the relevant literature, see \cite{Est1962, HB1996, Klo1927}). Problems within the orbit of our methods are not limited to diagonal cubic examples alone, and in \S11 we outline some of what may be said concerning problems of higher degree.\par

In this paper we employ the well-known symbols of Landau and Vinogradov. The constants implicit in the use of these symbols depend at most on $s$, $\bfa$, $\bfb$, $\bfc$, $\bfd$ and $\eps$, unless otherwise indicated. In an effort to simplify our analysis, we adopt the following convention concerning the number $\eps$. Whenever $\eps$ appears in a statement, either implicitly or explicitly, we assert that the statement holds for each $\eps>0$. Note that the ``value'' of $\eps$ may consequently change from statement to statement. Throughout, we take $B$ to be a positive real number with $B\gg 1$, in the sense indicated.

\section{Preliminary considerations} We initiate our discussion by introducing the notation and technical infrastructure necessary for our application of the circle method. Consider a system of the shape (\ref{1.1}) subject to the hypotheses of the statement of Theorem \ref{theorem1.1}. Since the conclusion of this theorem is already supplied by \cite[Theorem 2]{BW2007b}\footnote{We correct an oversight here in the statement of \cite[Theorem 9]{BW2007b}, an ingredient in the proof of \cite[Theorem 2]{BW2007b} relevant to our discussion when the system (\ref{1.1}) has the shape $(1,m,n)$ with $m\ge n\ge 6$. The statement of the former theorem should read as follows.
\begin{theorem*} Suppose that $t$ is a natural number with $t\ge 6$, and let $c_1,\ldots ,c_t$ be natural numbers satisfying $(c_1,\ldots ,c_t)=1$. Then for each natural number $d$ there is a positive number $\Del$, depending at most on $\bfc$ and $d$, with the property that the set $\calE_t(P)$, defined by
$$\calE_t(P)=\{n\in \dbN : \text{$\nu Pd^{-1/3}<n\le Pd^{-1/3}$, $(n,c_1\ldots c_t)=1$ and $R_t(dn^3;\bfc)<\Del P^{t-3}$}\},$$
has at most $P^{1-\tau}$ elements.
\end{theorem*}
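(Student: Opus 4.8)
The plan is to treat $R_t(dn^3;\bfc)$ by the circle method and to show that the $n$ for which the minor arcs are not dominated by the major arc main term form a slim set, by estimating a mean value over $n$. Write $f(\alp)=\sum_{x}e(\alp x^3)$, the sum over the range of $x$ implicit in $R_t$, which for $n$ in the relevant range is comparable with $[1,P]$ since then $dn^3\asymp P^3$; set $\grK(\alp)=\prod_{i=1}^{t}f(c_i\alp)$, so that $R_t(dn^3;\bfc)=\int_{0}^{1}\grK(\alp)e(-dn^3\alp)\,d\alp$. With $\grM$ the major arcs about rationals of height $O(P^{3/4})$ and $\grm$ the complementary minor arcs, the usual pruning and local approximation of $f$ deliver, for a fixed $\del>0$ and uniformly in $n$,
\[
\int_{\grM}\grK(\alp)e(-dn^3\alp)\,d\alp=\grS(dn^3)J(dn^3)+O(P^{t-3-\del}).
\]
Here $J(dn^3)\asymp P^{t-3}$, since $dn^3\asymp P^3$ and $t\ge 3$; and the hypotheses $t\ge 6$ and $(n,c_1\cdots c_t)=1$ guarantee — via the theory of diagonal cubic congruences of Davenport and Lewis, the coprimality condition controlling the primes dividing $c_1\cdots c_t$ — that the $p$-adic densities behind $\grS(dn^3)$ are all positive and bounded below uniformly in $n$ and $p$; at $t=6$ the pertinent singular series sits at the edge of absolute convergence, and it is here that one exploits that $dn^3$ is a perfect cube times $d$. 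Consequently there is a constant $c_0>0$, depending only on $\bfc$ and $d$, with $\int_{\grM}\grK(\alp)e(-dn^3\alp)\,d\alp\ge c_0 P^{t-3}$ for every $n$ under consideration, and on taking $\Del=c_0/2$ we see that every $n\in\calE_t(P)$ satisfies $|\int_{\grm}\grK(\alp)e(-dn^3\alp)\,d\alp|\ge\tfrac12 c_0 P^{t-3}$.

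It thus suffices to bound the number $Z$ of $n$ in the range with $|I(n)|\ge\tfrac12 c_0 P^{t-3}$, where $I(n)=\int_{\grm}\grK(\alp)e(-dn^3\alp)\,d\alp$. Since the summands below are non-negative, $Z\,P^{2(t-3)}\ll\sum_{n}|I(n)|^{2}$, and expanding the square gives
\[
\sum_{n}|I(n)|^{2}=\int_{\grm}\int_{\grm}\grK(\alp)\overline{\grK(\bet)}\,T(\bet-\alp)\,d\alp\,d\bet ,
\]
where $T(\gam)=\sum_{n}e(d\gam n^3)$ and, in both sums, $n$ runs over $(\nu Pd^{-1/3},Pd^{-1/3}]$. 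This double integral is nothing other than the contribution of the arcs $\grm\times\grm$ to the number of integral solutions of the pair of diagonal cubic equations $c_1u_1^3+\cdots+c_tu_t^3=dn^3=c_1v_1^3+\cdots+c_tv_t^3$, a system of the shape $(1,t,t)$ in $2t+1$ variables with $q_0^*=t+1$; and $\grm\times\grm$ lies among the two-dimensional minor arcs of this pair, since $\alp\in\grm$ already keeps $(\alp,\bet)$ away from every pair of rationals of small common denominator, whatever $\bet$ may be. Because $t\ge 6$ one has $2t+1\ge 13$ and $q_0^*\ge 7$, and in this regime the double integral admits a power-saving bound: it is $O(P^{2t-5-\tau})$ for a suitable $\tau>0$, this minor arc estimate being the crux of the matter. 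Inserting it yields $Z\ll P^{1-\tau}$, which is the theorem.

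The minor arc estimate I would carry out directly, rather than deduce from the pairs-of-cubics results it is designed to support. After writing $\bet=\alp+\gam$, the double integral becomes $\int_{0}^{1}T(\gam)\Psi(\gam)\,d\gam$, with $\Psi(\gam)=\int\grK(\alp)\overline{\grK(\alp+\gam)}\,d\alp$ the self-correlation of $\grK$ across the minor arcs. Where $\gam$ lies far from every rational of small denominator, Weyl's inequality gives $T(\gam)\ll P^{3/4+\eps}$, and that part is
\[
\ll P^{3/4+\eps}\int_{0}^{1}|\Psi(\gam)|\,d\gam\ll P^{3/4+\eps}\Bigl(\int_{0}^{1}|\grK(\alp)|\,d\alp\Bigr)^{2}\ll P^{3/4+\eps}\Bigl(\int_{0}^{1}|f(\alp)|^{t}\,d\alp\Bigr)^{2}\ll P^{2t-5-1/4+\eps},
\]
in which the estimate $\int_{0}^{1}|f(\alp)|^{t}\,d\alp\ll P^{t-3+\eps}$ is where the hypothesis $t\ge 6$ first bites, this moment being instead of size $\asymp P^{t/2}$ when $t<6$. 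The hard part — and the main obstacle — is the neighbourhoods of the rationals of small denominator, where $T(\gam)$ is large: there one runs the circle method in $\gam$, and must extract the saving by showing that $\Psi(\gam)$ is correspondingly small, exploiting that each of $f(c_i\alp)$ and $f(c_i(\alp+\gam))$ remains a minor arc value and feeding in the standard mean value estimates for cubic Weyl sums (the sixth moment bound $\int_{0}^{1}|f(\alp)|^{6}\,d\alp\ll P^{3+\eps}$, Hua's inequality, and their refinements). It is this balance, at $t=6$, that is the heart of the matter.
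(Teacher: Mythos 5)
The paper does not prove this statement; it only records, in a footnote, a correction to Theorem~9 of the cited prior work [BW2007b] (the insertion of the hypothesis $(n,c_1\cdots c_t)=1$), where the actual proof resides. So there is no proof in the present paper to compare against, and I assess your proposal against what the argument must look like.

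Your framework --- a circle method lower bound for the major arc contribution to $R_t(dn^3;\bfc)$, with the coprimality hypothesis keeping the singular series bounded below, followed by a mean square over minor arcs to bound the count of exceptional $n$ --- is the standard one and is surely close in outline to [BW2007b, \S 6]. You also correctly locate the role of $(n,c_1\cdots c_t)=1$ as protecting the positivity of $\grS(dn^3)$, which is precisely what the footnoted correction addresses.

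The genuine gap is at the minor arc mean square. Your bound for $\sum_n|I(n)|^2$ relies, explicitly in your final paragraph and implicitly in the display that uses $\int_0^1|f(\alp)|^t\d\alp\ll P^{t-3+\eps}$ at $t=6$, on the estimate $\int_0^1|f(\alp)|^6\d\alp\ll P^{3+\eps}$ for the classical cubic Weyl sum. That estimate is not available unconditionally: the present paper's own Lemma~6.2 records it only under HRH, and the best unconditional moment (Hua's eighth, $\int_0^1|f(\alp)|^8\d\alp\ll P^{5+\eps}$) only salvages the computation for $t\ge 8$. Since the theorem is asserted unconditionally for all $t\ge 6$, a different device is required when $t=6$ or $7$. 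That device is smooth Weyl sums: replacing at least two of the factors $f(c_i\alp)$ in $\grK(\alp)$ by exponential sums over smooth numbers gives access to mean values of the strength of $(2.1)$ in this paper, $\int_0^1|g(a\tet)^2h(b\tet)^4|\d\tet\ll B^{13/4-\tau_1}$, which supply the required unconditional power saving at the sixth-moment level. This forces a corresponding adjustment in the major arc analysis (one now counts only a positive proportion of all representations, which is still sufficient to show $R_t(dn^3;\bfc)\ge\Del P^{t-3}$ for most $n$). Without this substitution your mean square is either conditional on HRH or fails at $t=6,7$; and in any case the portion of the argument you yourself flag as ``the heart of the matter'' --- the contribution when $\bet-\alp$ lies near a rational of small denominator --- is described but not actually carried out.
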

\noindent The inserted condition $(n,c_1\ldots c_t)=1$ should also be imposed in the subsequent application of this theorem in \cite[\S6]{BW2007b}. Our forthcoming work \cite{BW2012} discusses the case $(l,m,n)=(1,6,6)$ as a particular instance of more general investigations of diagonal senary cubic forms.
}
 when $s\ge 13$, there is no loss of generality in restricting to the situations wherein $s=11$ or $s=12$. A modicum of computation reveals that the triple $(l,m,n)$ associated with the system (\ref{1.1}) must take one of four shapes, namely:
\begin{enumerate}
\item[(A)] $(3,4,4)$ or $(3,5,4)$,
\item[(B)] $(4,4,3)$, $(4,4,4)$, $(4,5,3)$ or $(5,4,3)$,
\item[(C)] $(2,5,5)$,
\item[(D)] $(5,5,2)$.
\end{enumerate}

\par Systems of type A and B we analyse by very similar methods in \S\S3 and 4, respectively. The reader will find that the ideas developed in these sections serve as a model for the treatment of the remaining cases relevant to the proof of Theorem \ref{theorem1.1}, as well as the cases required to establish Theorem \ref{theorem1.2}. In order to handle systems of type C, we `borrow' a variable from each of the long blocks of $5$, adding them to the short block of $2$. In this way we obtain a system superficially resembling those of type B, though sharing characteristics with those of type A. In this way, we are able in \S5 to offer an economical treatment of systems of type C that rests heavily on the work of \S\S3 and 4. Readers may care to challenge themselves with the task of developing an alternative treatment based on our work \cite{ARTS1} joint with Kawada, in which the system (\ref{1.1}) is understood in terms of an exceptional set problem involving the representation of values of a binary diagonal form by a diagonal form in five variables. Finally, in order to accommodate systems of type D, we first develop mean value estimates for exponential sums conditional on HRH, and then adapt the methods used for our analysis of systems of type A and B. We offer an abbreviated account of this work in \S6.\par

It is apparent that the system (\ref{1.1}) possesses a real solution $(\bfx,\bfy,\bfz)=(\bfxi,\bfeta,\bfzet)\in (-1,1)^s$ in which $\xi_i$, $\eta_i$ and $\zet_i$ are each positive for $i\ne 1,2$. We put
$$\nu=\tfrac{1}{2}\min_{i\ne 1,2}\{ \xi_i,\eta_i,\zet_i\}.$$
Let $\eta$ be a small positive number to be fixed in due course, and write
$$\calA_\eta (B)=\{ n\in \dbZ\cap [1,B]: \text{$p$ prime and $p|n$}\Rightarrow p\le B^\eta\}.$$
We then put
$$\calA_\eta^*(B)=\{n\in [-B,B]: \text{$|n|\in \calA_\eta(B)$ or $n=0$}\}.$$
Define the exponential sums
$$f(\tet)=\sum_{|x|\le B}e(\tet x^3),\quad g(\tet)=\sum_{\nu B<x\le B}e(\tet x^3),\quad h(\tet)=\sum_{x\in \calA_\eta^*(B)}e(\tet x^3),$$
where, as usual, we write $e(z)$ for $e^{2\pi iz}$. Let $\tau_0$ be the positive number defined via the relation $\tau_0^{-1}=852+16\sqrt{2833}=1703.6\ldots $. Then, when $a$ and $b$ are fixed non-zero integers and $\tau_1$ is any real number with $\tau_1<\tau_0$, the methods of \cite{Woo2000} may be applied to confirm that whenever $\eta$ is a sufficiently small positive number, one has
\begin{equation}\label{2.1}
\int_0^1|g(a\tet)^2h(b\tet)^4|\d\tet \ll B^{13/4-\tau_1}.
\end{equation}
We direct the reader to \cite[\S5]{Woo1995b} and \cite[\S2]{Woo2000} for the necessary ideas, the presence of the coefficients $a$ and $b$ leading to superficial complications only. We put $\tau=\frac{1}{10}\tau_0$, and for the remainder of this paper we fix our choice of $\eta>0$ to be sufficiently small in the context of the upper bound (\ref{2.1}) with $\tau_1=9\tau$.\par 

Having introduced the cast of exponential sums to appear in our application of the circle method, we next introduce the generating functions
\begin{equation}\label{2.2}
F(\alp,\bet)=h(a_1\alp+b_1\bet)h(a_2\alp+b_2\bet)\prod_{i=3}^lg(a_i\alp+b_i\bet),
\end{equation}
\begin{equation}\label{2.3}
G(\alp)=h(c_1\alp)h(c_2\alp)\prod_{j=3}^mg(c_j\alp),\quad H(\bet)=h(d_1\bet)h(d_2\bet)\prod_{k=3}^ng(d_k\bet).
\end{equation}
Here we adopt the convention that an empty product is equal to unity. When $\grB\subseteq [0,1)^2$ is measurable, we define
\begin{equation}\label{2.4}
N(B;\grB)=\iint_\grB F(\alp,\bet)G(\alp)H(\bet)\d\alp\d\bet.
\end{equation}
Then, by orthogonality, one finds that
\begin{equation}\label{2.5}
N(B)\ge N(B;[0,1)^2).
\end{equation}

At this stage we introduce the primary Hardy-Littlewood dissection. We take the major arcs $\grM$ to be the union of the intervals
$$\grM(q,a)=\{\alp\in [0,1):|q\alp-a|\le B^{-9/4}\},$$
with $0\le a\le q\le B^{3/4}$ and $(a,q)=1$. The corresponding set of minor arcs $\grm$ is defined by putting $\grm=[0,1)\setminus \grM$. In addition, we define a two-dimensional Hardy-Littlewood dissection as follows. With an eye towards concision in future sections, we put
$$L=\log B,\quad \calL=\log L\quad \text{and}\quad Q=L^{1/100}.$$
We then define the narrow major arcs $\grN$ to be the union of the boxes
$$\grN(q,a,b)=\{ (\alp,\bet)\in [0,1)^2:\text{$|\alp-a/q|\le QB^{-3}$ and $|\bet-b/q|\le QB^{-3}$}\},$$
with $0\le a,b\le q\le Q$ and $(a,b,q)=1$. The complementary set of minor arcs is $\grn=[0,1)^2\setminus \grN$. Finally, we write
\begin{equation}\label{2.6}
\grK=(\grM\times \grM)\setminus \grN.
\end{equation}
We regard sets of major and minor arcs throughout as subsets of $\dbR/\dbZ$, or the appropriate higher dimensional analogue of the latter. Thus, for example, when we write $\gam\in \grm$, then we are implicitly asserting that $\gam\in \grm+\dbZ$.\par

Our strategy for obtaining a lower bound for $N(B;[0,1)^2)$ employs the Hardy-Littlewood method, of course, though in a somewhat unconventional manner. We begin by analysing the contribution of the narrow set of major arcs $\grN$.

\begin{lemma}\label{lemma2.1} Suppose that the system (\ref{1.1}) admits non-singular $p$-adic solutions for each prime number $p$. Then for systems of type A, B, C and D, one has $N(B;\grN)\gg B^{s-6}$.
\end{lemma}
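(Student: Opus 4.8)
The plan is to run a standard major-arc analysis on the narrow arcs $\grN$, separating the singular integral and singular series and showing each is positive under the $p$-adic solubility hypothesis. First I would record pointwise approximations for $f$, $g$ and $h$ on $\grN$. On the narrow box $\grN(q,a,b)$ one has $\alp=a/q+\gam$ with $|\gam|\le QB^{-3}$ and similarly for $\bet$, so each argument $a_i\alp+b_i\bet$, $c_j\alp$, $d_k\bet$ lies on a one-dimensional major arc of the primary dissection $\grM$. The exponential sum $g$ differs from the full cubic sum $f$ only in the truncated range $\nu B<x\le B$, and $h$ differs by restriction to $\calA_\eta^*(B)$; in both cases the classical Weyl–Hua major-arc approximation (sum over residues times an oscillatory integral, plus a power-saving error) remains valid, with the sieved sum $h$ contributing a positive proportion of the smooth count — here one uses that the density of $B^\eta$-smooth numbers up to $B$ is bounded below by a positive constant depending on $\eta$, so $h$ on a major arc is $\gg$ the archimedean integral against a positive local factor. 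Assembling these, I obtain on $\grN$ the approximation
$$
F(\alp,\bet)G(\alp)H(\bet) = B^{s-6} q^{-s} S_{q}(a,b)\, I(B^3\gam_1, B^3\gam_2) + \text{(error)},
$$
where $S_q(a,b)$ is the complete exponential sum attached to the pair of forms modulo $q$ and $I$ is the two-dimensional oscillatory integral coming from $f$'s with the appropriate weight from the $g$'s and $h$'s.

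Next I would integrate over $\grN$. Summing the main term over the boxes and extending the $\gam$-integrations to $\dbR^2$ at the cost of an acceptable error (permissible since $s\ge 11$ gives more than enough decay in $I$), and completing the sum over $q\le Q$ to all $q$, produces
$$
N(B;\grN) = \mathfrak{S}\,\mathfrak{J}\, B^{s-6} + o(B^{s-6}),
$$
with $\mathfrak{J}$ the singular integral and $\mathfrak{S}=\prod_p \sigma_p$ the singular series, the local factor $\sigma_p$ being the $p$-adic density of the weighted counting problem. The singular integral $\mathfrak{J}$ is positive because the system has a nonsingular real point in the relevant box with the positivity of coordinates built into the definition of $\nu$, so the Fourier-analytic expression for $\mathfrak{J}$ detects a genuine piece of the real manifold with positive $(s-2)$-dimensional measure. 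Absolute convergence of $\mathfrak{S}$ for $s\ge 11$ follows from standard bounds on $S_q(a,b)$ (the $q$ coprime to all coefficients contributing $O(q^{2-s/3+\eps})$, which is summable), and each $\sigma_p>0$ because a nonsingular $p$-adic zero, via Hensel's lemma, forces $M(p^h)\gg p^{h(s-2)}$. The sieving by smoothness only reduces $\sigma_p$ by a factor bounded away from zero uniformly in $p$ (for $p>B^\eta$ the constraint is vacuous, and there are only $O(1)$ small primes), so $\mathfrak{S}\gg 1$.

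The main obstacle, and the step requiring genuine care rather than bookkeeping, is controlling the error terms in the pointwise approximation uniformly over all of $\grN$ simultaneously — in particular ensuring the aggregate error after integrating over the $\gam$'s and summing over $q\le Q$ is $o(B^{s-6})$. This is delicate because $\grN$ is defined with the very short radius $QB^{-3}$ and modulus range $q\le Q$ with $Q=L^{1/100}$ only a tiny power of $\log B$, so the power-saving in the individual approximations must be leveraged against the loss of up to $Q^{O(1)}$ from summing over $(q,a,b)$; the saving $B^{-\delta}$ from Weyl's inequality on the complementary part, together with the fact that $s-6\ge 5$ leaves slack, makes this go through, but verifying the uniformity of the smooth-sum approximation (following the treatment in the references to Vaughan and Wooley cited around (2.1)) against the coefficients $a_i,b_i,c_j,d_k$ is where the real work lies. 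Once that is in hand, the lower bound $N(B;\grN)\gg B^{s-6}$ is immediate from $\mathfrak{S}\mathfrak{J}\gg 1$.
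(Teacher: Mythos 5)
Your overall plan---approximate the generating functions on the narrow boxes, assemble the truncated singular series and singular integral, and argue positivity of each---is the same strategy the paper uses; the paper simply delegates the computation to \cite[\S7]{BW2007a}, working with truncated objects $\grS(Q)$ and $\grJ(Q)$ and an intermediate bound $N(B;\grN)-C\grS(Q)\grJ(Q)\ll B^{s-6}L^{-1/4}$, rather than passing directly to the full $\grS$ and $\grJ$. The error control over $\grN$ that you flag as the ``delicate'' step is actually the tame part, precisely because $Q=L^{1/100}$ is only a power of $\log B$ and the individual approximation errors already save a power of $B$; the paper dispatches it with a single citation.

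There are two places where your sketch does not hold up as written. First, your convergence argument for the singular series handles only the moduli $q$ coprime to all coefficients, where indeed $A(q)\ll q^{2-s/3+\eps}$ is summable for $s\ge 11$; but the entire difficulty lies in the moduli sharing prime factors with the $a_i,b_i,c_j,d_k$. There the bound degrades, and one must track how the variables are distributed across the two equations. The paper's bound $A(q)\ll q^{-s/3}\sum q^2(v_1^{r_1}\cdots v_t^{r_t})^{1/3}/(v_1\cdots v_t)$ with $\max_i r_i\le 5$ is exactly where the hypothesis $q_0^*\ge 7$ (equivalently, the restriction to types A--D) enters; without that structural constraint the exponent can be too large and the series may fail to converge absolutely. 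Your proposal never invokes this hypothesis, so as written the singular-series step has a real gap. Second, the remark that ``sieving by smoothness only reduces $\sigma_p$ by a factor bounded away from zero'' misattributes the effect: the restriction of $h$ to smooth arguments does not touch the local $p$-adic densities (the residue sums $S(q,\cdot)$ are complete), but rather damps the archimedean approximant, replacing $w(\theta)$ by a constant multiple thereof (the Dickman density). This is why the paper's asymptotic carries an explicit positive constant $C$ in front of $\grS(Q)\grJ(Q)$; the singular series itself is unaffected by the smooth restriction.
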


\begin{proof} We begin by defining the Gauss sum
$$S(q,a)=\sum_{r=1}^qe(ar^3/q),$$
and then introduce the expression
\begin{equation}\label{2.7}
A(q)=\underset{(u,v,q)=1}{\sum_{u=1}^q\sum_{v=1}^q}\,T(q,u,v),
\end{equation}
where
$$T(q,u,v)=\prod_{i=1}^lS(q,a_iu+b_iv)\prod_{j=1}^mS(q,c_ju)\prod_{k=1}^nS(q,d_kv).$$
In addition, we write
$$v(\tet)=\int_{\nu B}^Be(\tet \gam^3)\d\gam \quad \text{and}\quad w(\tet)=\int_{-B}^Be(\tet \gam^3)\d\gam ,$$
and then put
$$V(\xi,\zet)=V_F(\xi,\zet)V_G(\xi)V_H(\zet),$$
where
$$V_F(\xi,\zet)=w(a_1\xi+b_1\zet)w(a_2\xi+b_2\zet)\prod_{i=3}^lv(a_i\xi+b_i\zet),$$
$$V_G(\xi)=w(c_1\xi)w(c_2\xi)\prod_{j=3}^mv(c_j\xi),\quad V_H(\zet)=w(d_1\zet)w(d_2\zet)\prod_{k=3}^nv(d_k\zet).$$
Finally, we define
$$\grJ (X)=\iint_{\calB(X)}V(\xi,\zet)\d\xi\d\zet \quad \text{and}\quad \grS(X)=\sum_{1\le q\le X}A(q),$$
in which we have written $\calB(X)$ for the box $[-XB^{-3},XB^{-3}]^2$. Then by following the argument of \cite[\S7]{BW2007a} leading to \cite[equation (7.8)]{BW2007a}, one finds that there exists a positive constant $C$ with the property that
\begin{equation}\label{2.8}
N(B;\grN)-C\grS(Q)\grJ(Q)\ll B^{s-6}L^{-1/4}.
\end{equation}

\par In order to estimate the truncated singular series $\grS(Q)$, we begin by following the argument of the proof of the estimate \cite[(7.14)]{BW2007a} presented on page 890 of the latter paper. In the present context we find that there is an integer $t\ge 3$, and a non-zero integer $\Del$ depending on $\bfa$, $\bfb$, $\bfc$, $\bfd$, such that
$$A(q)\ll q^{-s/3}\sum_{\substack{v_1,\ldots,v_t\\ v_1\ldots v_t|\Del q}}\frac{q^2}{v_1\ldots v_t}(v_1^{r_1}\ldots v_t^{r_t})^{1/3},$$
in which $r_1,\ldots,r_t$ are positive integers satisfying $r_1+\ldots +r_t=s$, and further $\max_ir_i=4$ when $s=11$, and $\max_ir_i\le 5$ when $s=12$ . Consequently, one has $A(q)\ll q^{\eps-4/3}$. An inspection of the proof of \cite[Lemma 12]{BW2007a}, noting \cite[equation (7.14)]{BW2007a}, reveals that $\grS=\underset{X\rightarrow \infty}{\lim}\grS(X)$ exists, that $\grS-\grS(X)\ll X^{-1/4}$, and thus $\grS(Q)\gg 1$. A pedestrian modification of the proof of \cite[Lemma 13]{BW2007a}, on the other hand, reveals that in the present context one has $\grJ(Q)\gg B^{s-6}$. In combination with the lower bound $\grS(Q)\gg 1$ just obtained, the conclusion of the lemma is evident from the relation (\ref{2.8}).
\end{proof}

A detailed account of the next step in our analysis, a comparison of $N(B;\grN)$ with $N(B;\grM\times \grM)$, depends on the particular case at hand, and this we defer to the following four sections. However, we take the opportunity now to record several auxiliary estimates of significance in the discussion to come. We begin by considering certain major arc integrals.

\begin{lemma}\label{lemma2.2} Let $a$ be a fixed non-zero integer, and let $b$ be a non-zero rational number. Then one has
\begin{equation}\label{2.9}
\sup_{\lam\in \dbR}\int_\grM |g(a\tet)h(b\tet+\lam)|^2\d\tet \ll B^{1+\eps},
\end{equation}
and when $\del>0$,
\begin{equation}\label{2.10}
\sup_{\lam\in \dbR}\int_\grM |g(a\tet)|^{2+\del}|h(b\tet+\lam)|^2\d\tet \ll B^{1+\del}.
\end{equation}
\end{lemma}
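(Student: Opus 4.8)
The plan is to treat both estimates by the standard major-arc technology for cubic Weyl sums, pulling the modulus of $g(a\theta)$ through the Hardy--Littlewood dissection while using the restricted sum $h$ only through the crude bound $|h(\beta)|\le g(\beta/a\cdot a)+O(1)\ll B$ or, more usefully, through the observation that $h$ is dominated on the relevant intervals by the same local approximations as $g$. Concretely, on the major arc $\grM(q,c)$ one has the familiar pruning estimate $g(a\theta)\ll q^{\eps}B(q+B^3|q\theta-c|)^{-1/3}$, valid uniformly in the coefficient $a$ (the presence of $a$ causes only a bounded loss, since $a$ is fixed), and one has the trivial bound $h(b\theta+\lambda)\ll B$.

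For (\ref{2.9}) I would first dispose of the contribution of $|h(b\theta+\lambda)|^2$ by the trivial bound, reducing matters to $B^2\int_\grM|g(a\theta)|^2\,\d\theta$, which is too lossy; so instead I would keep one factor of $h$ and square out, writing $|g(a\theta)h(b\theta+\lambda)|^2$ and integrating term by term over $\grM$. The cleaner route is: estimate $\int_\grM|g(a\theta)h(b\theta+\lambda)|^2\,\d\theta$ by splitting $\grM$ into the arcs $\grM(q,c)$ and on each using $|h(b\theta+\lambda)|\ll B$ together with $\int_{\grM(q,c)}|g(a\theta)|^2\,\d\theta\ll q^\eps B^{-1}(1+\ldots)$, or better, bound $h$ by the number of its summands on the relevant subarc and invoke the standard estimate that, over all of $\grM$, $\int_\grM|g(a\theta)|^4\,\d\theta\ll B^{1+\eps}$ (Hua-type on the major arcs). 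Then by Cauchy--Schwarz,
$$\int_\grM|g(a\theta)h(b\theta+\lambda)|^2\,\d\theta\le\Bigl(\int_\grM|g(a\theta)|^4\,\d\theta\Bigr)^{1/2}\Bigl(\int_\grM|h(b\theta+\lambda)|^4\,\d\theta\Bigr)^{1/2}\ll B^{1+\eps},$$
where the second factor is handled by the change of variables $\phi=b\theta+\lambda$ (legitimate since $b$ is a fixed non-zero rational, costing only a bounded factor and a harmless enlargement of the arcs) together with the trivial inclusion $h(\phi)\le f(\phi)$ in absolute value on the image set, or directly by the major-arc fourth moment bound for $f$.

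For (\ref{2.10}), the exponent $2+\delta$ on $|g(a\theta)|$ is supercritical for the fourth-moment bootstrap, so I would interpolate: write $|g(a\theta)|^{2+\delta}=|g(a\theta)|^{2}\cdot|g(a\theta)|^{\delta}$ and use $|g(a\theta)|^\delta\ll B^\delta$ trivially, reducing to $B^\delta\int_\grM|g(a\theta)h(b\theta+\lambda)|^2\,\d\theta\ll B^{1+\delta+\eps}$ by (\ref{2.9}). To remove the $\eps$ and obtain the sharp $B^{1+\delta}$ claimed, one instead keeps the Weyl-type savings: on $\grM(q,c)$ one has $|g(a\theta)|\ll q^\eps B(q+B^3\beta)^{-1/3}$ with $\beta=|q\theta-c|$, hence $|g(a\theta)|^{2+\delta}\ll q^\eps B^{2+\delta}(q+B^3\beta)^{-(2+\delta)/3}$, and with $|h(b\theta+\lambda)|^2\ll B^2$ the sum over $q\le B^{3/4}$ and integral over $\beta\le B^{-9/4}$ of $q^\eps(q+B^3\beta)^{-(2+\delta)/3}$ converges (the exponent $(2+\delta)/3>2/3$ gives a convergent $q$-sum after the $\beta$-integral contributes $B^{-3}(q)^{1-(2+\delta)/3}$), yielding the bound $B^{2+\delta}\cdot B^2\cdot B^{-3}\cdot(\text{convergent})=B^{1+\delta}$ up to the constant. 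The main obstacle is bookkeeping the uniformity in $\lambda$ and $b$: one must check that the shift by $\lambda$ and the dilation by $b$ do not destroy the major-arc structure, i.e. that $h(b\theta+\lambda)$ is still controlled by local data on $\grM$. This is handled by the trivial bound $|h(b\theta+\lambda)|\le 2B+1$, which is all that is needed since the decay is extracted entirely from the factor $g(a\theta)$; the supremum over $\lambda$ is then immediate because no property of $h$ beyond its size is used.
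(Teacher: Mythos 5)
Your proposal attempts a self-contained proof, whereas the paper's own proof of this lemma is a one-line citation to \cite[Lemma 9]{BW2007a} (with (\ref{2.9}) obtained by taking $\del=0$ in that argument, and a pointer to \cite[Lemma 3.4]{ARTS1}). Unfortunately your self-contained route contains two genuine gaps, one in each half.

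For (\ref{2.9}), after the Cauchy--Schwarz step you need $\sup_\lam\int_\grM|h(b\tet+\lam)|^4\d\tet\ll B^{1+\eps}$. You propose to justify this by the change of variables $\phi=b\tet+\lam$ together with either ``$|h(\phi)|\le|f(\phi)|$'' or a major-arc fourth-moment bound for $f$. Both justifications fail. The pointwise inequality $|h|\le|f|$ is false: $h$ is a \emph{sub-sum} of $f$, and exponential sums are not monotone in absolute value under removal of terms. More seriously, the image of $\grM$ under $\tet\mapsto b\tet+\lam$ is a union of short intervals \emph{centred at the points $bc/q+\lam$}, which for generic $\lam$ are nowhere near rationals of small denominator; so the set of $\phi$ over which you are integrating $|h(\phi)|^4$ is not a major-arc set, and no major-arc moment estimate applies. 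The only $\lam$-uniform bound available is $\int_0^1|h|^4\ll B^{2+\eps}$, which in the Cauchy--Schwarz product yields only $B^{3/2+\eps}$.

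For (\ref{2.10}), you assert that the trivial bound $|h(b\tet+\lam)|\ll B$, combined with the pointwise decay of $g(a\tet)$ on $\grM(q,c)$, already produces $B^{1+\del}$, i.e.\ that $\int_\grM|g(a\tet)|^{2+\del}\d\tet\ll B^{\del-1}$. This is false unless $\del$ is quite large. Already from the single arc $\grM(1,0)$ one has, for $0<\del<1$,
\begin{equation*}
\int_{\grM(1,0)}|g(a\tet)|^{2+\del}\d\tet\gg B^{2+\del}\int_{0}^{B^{-9/4}}(1+B^3\gam)^{-(2+\del)/3}\d\gam\gg B^{3(\del-1)/4},
\end{equation*}
since the $\gam$-integral does \emph{not} converge for $(2+\del)/3\le 1$ and instead picks up the factor $(B^{3/4})^{(1-\del)/3}$. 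Multiplying by $B^2$ gives at least $B^{(5+3\del)/4}$, which exceeds $B^{1+\del}$ for $\del<1$; including the arcs with $q>1$ makes matters worse, since $\sum_q\sum_c$ also diverges, giving in total a bound of the order $B^{7/4+5\del/8+\eps}$, not $B^{1+\del}$. The step ``the $\beta$-integral contributes $B^{-3}q^{1-(2+\del)/3}$'' is only valid for $\del>1$, and even then the outer $q$-sum $\sum_q q^{1+\eps}\cdot q^{-(2+\del)/3}$ does not converge for $\del<4$. In short, the claim in your final paragraph that ``the decay is extracted entirely from the factor $g(a\tet)$'' and that ``no property of $h$ beyond its size is used'' cannot be correct; you even note at the outset that the trivial $|h|\ll B$ bound is ``too lossy'', and this remains true for all $\del$ in the relevant range. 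Some non-trivial input on $h$ — in practice its mean-square behaviour $\int_0^1|h(\phi)|^2\d\phi\ll B$, or its Fourier coefficients, deployed through a Bessel/Parseval or majorant argument as in \cite[Lemma 9]{BW2007a} — is essential to reach $B^{1+\del}$.
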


\begin{proof} The upper bound (\ref{2.10}) is immediate from \cite[Lemma 9]{BW2007a}. By taking $\del=0$ in the argument of the proof of the latter, meanwhile, one obtains (\ref{2.9}) (see also \cite[Lemma 3.4]{ARTS1}).
\end{proof}

As an immediate consequence of this lemma, we obtain major arc estimates for $G(\alp)$ and $H(\bet)$.

\begin{lemma}\label{lemma2.3} For systems of type A and C, one has
$$\int_\grM |G(\alp)|\d\alp \ll B^{m-3+\eps} \quad \text{and}\quad \int_\grM |H(\bet)|\d\bet \ll B^{n-3+\eps}.$$
The former estimate holds also for systems of type B and D.
\end{lemma}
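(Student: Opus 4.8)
The plan is to deduce both bounds from Lemma \ref{lemma2.2}: a single application of the estimate (\ref{2.9}) with $\lam=0$, combined with Cauchy's inequality and, when a block of $5$ variables is present, a trivial bound for one copy of $g$, will suffice.

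Consider first the integral involving $G$. For systems of all four types one has $m\in\{4,5\}$, so in every case $G(\alp)$ carries the factors $h(c_1\alp)$, $h(c_2\alp)$, $g(c_3\alp)$ and $g(c_4\alp)$. When $m=4$ we write $G(\alp)=\bigl(h(c_1\alp)g(c_3\alp)\bigr)\bigl(h(c_2\alp)g(c_4\alp)\bigr)$, so that Cauchy's inequality yields
\begin{equation*}
\int_\grM|G(\alp)|\d\alp\le\Bigl(\int_\grM|g(c_3\alp)h(c_1\alp)|^2\d\alp\Bigr)^{1/2}\Bigl(\int_\grM|g(c_4\alp)h(c_2\alp)|^2\d\alp\Bigr)^{1/2}.
\end{equation*}
Since $c_1$ and $c_2$ are non-zero integers, the bound (\ref{2.9}) (with $\lam=0$) shows each factor on the right to be $O(B^{1+\eps})$, whence $\int_\grM|G(\alp)|\d\alp\ll B^{1+\eps}=B^{m-3+\eps}$. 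When $m=5$ we first invoke the trivial bound $|g(c_5\alp)|\ll B$ and then apply the case just treated to the residual product $h(c_1\alp)h(c_2\alp)g(c_3\alp)g(c_4\alp)$, obtaining $\int_\grM|G(\alp)|\d\alp\ll B\cdot B^{1+\eps}=B^{m-3+\eps}$.

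For systems of type A and C one has $n\in\{4,5\}$, and the identical argument, with $(\bfd,n)$ in place of $(\bfc,m)$, gives $\int_\grM|H(\bet)|\d\bet\ll B^{n-3+\eps}$. For systems of type B and D one again has $m\in\{4,5\}$, so the reasoning for $G$ carries over verbatim; the companion estimate for $H$ is not asserted in these cases, and when $n=3$ it does not follow from this approach, since bounding one copy of $g$ trivially leaves only two copies of $h$, to which (\ref{2.9}) cannot be applied.

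There is no genuine obstacle here: the one point requiring attention is the bookkeeping over the admissible triples $(l,m,n)$, together with the observation that $G$ and $H$ each consist of precisely two copies of $h$ and the remaining copies of $g$, so that the factors always pair up as needed for Cauchy's inequality. (One could equally replace one use of (\ref{2.9}) by the variant (\ref{2.10}), but the route above is the most economical.)
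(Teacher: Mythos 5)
Your proof is correct and follows essentially the same route as the paper's: the paper likewise applies the trivial bound $g(0)^{m-4}$ to the surplus $g$ factors, then Schwarz's inequality to pair $g(c_{2+i}\alp)$ with $h(c_i\alp)$ for $i=1,2$, and concludes via the estimate (\ref{2.9}) of Lemma \ref{lemma2.2}, arguing symmetrically for $H$ when $n\ge 4$. Your explicit case split $m\in\{4,5\}$ (and $n\in\{4,5\}$) is merely a more concrete phrasing of the paper's $m\ge 4$; the substance is identical, and your remark explaining why $n=3$ is excluded for $H$ in types B and D correctly identifies the obstruction.
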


\begin{proof} For systems of type A, B, C and D, one has $m\ge 4$. Thus, by applying a trivial estimate for $g(\tet)$ in combination with Schwarz's inequality and Lemma \ref{lemma2.2}, one obtains
$$\int_\grM|G(\alp)|\d\alp \le g(0)^{m-4}\prod_{i=1}^2\Bigl( \int_\grM |g(c_{2+i}\alp)h(c_i\alp)|^2\d\alp\Bigr)^{1/2}\ll B^{m-4}(B^{1+\eps}).$$
For systems of type A and C, meanwhile, one has $n\ge 4$. Hence, by following a similar argument to that just described, with $H(\bet)$ in place of $G(\alp)$, the second assertion of the lemma is confirmed in like manner.
\end{proof}

\par We finish by recording two mean value estimates of some generality. In this context, when $r\ge t\ge 3$ and $\lam_1,\ldots,\lam_r$ are fixed non-zero integers, we write
$$E_t(\tet)=h(\lam_1\tet)h(\lam_2\tet)\prod_{i=3}^tg(\lam_i\tet).$$

\begin{lemma}\label{lemma2.4} One has
$$\int_0^1|E_t(\tet)|^2\d\tet \ll_\bflam B^{2t-11/4-9\tau}\quad (3\le t\le r),$$
and
$$\int_\grm |E_t(\tet)|^2\d\tet \ll_\bflam B^{2t-13/4-8\tau}\quad (4\le t\le r).$$
\end{lemma}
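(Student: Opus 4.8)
The plan is to reduce both estimates of Lemma~\ref{lemma2.4} to the basic bound (\ref{2.1}) by means of H\"older's inequality applied to the exponential sums $g(\lam_i\tet)$. The key observation is that the integrand $\abs{E_t(\tet)}^2$ contains exactly two sixth powers of $h$-sums and $2(t-2)$ fourth powers (counting with the square) of $g$-sums; since $t\ge 3$, there is always at least one $g$-factor present, and the worst case for the exponents is $t=3$, which forces us to deploy (\ref{2.1}) with only a single $g^2$ available.

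First I would treat the minor arc estimate with $t=3$. Here $\abs{E_3(\tet)}^2 = \abs{h(\lam_1\tet)h(\lam_2\tet)}^2\abs{g(\lam_3\tet)}^2$. On the minor arcs $\grm$ one has the pointwise Weyl-type bound $g(\lam_3\tet)\ll B^{3/4+\eps}$ arising from the definition of $\grM$ (via Weyl's inequality in the shape used for $f$, hence for $g$); combining this with $\sup_\tet\abs{g(\lam_3\tet)}\ll B$ one obtains, for any $\del>0$, a bound $\abs{g(\lam_3\tet)}^2 \ll B^{3/2+2\eps}\abs{g(\lam_3\tet)}^\del$ on $\grm$ --- but this is not yet quite enough on its own. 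The cleaner route is: apply H\"older in the form
$$\int_\grm \abs{h(\lam_1\tet)h(\lam_2\tet)}^2\abs{g(\lam_3\tet)}^2\d\tet \le \Bigl(\sup_{\tet\in\grm}\abs{g(\lam_3\tet)}\Bigr)^{2-\sigma}\int_0^1\abs{g(\lam_3\tet)}^\sigma\abs{h(\lam_1\tet)h(\lam_2\tet)}^2\d\tet,$$
and then bound the remaining integral by H\"older again so as to split it into copies of $\int_0^1\abs{g(a\tet)^2h(b\tet)^4}\d\tet \ll B^{13/4-9\tau}$ (our fixed choice $\tau_1=9\tau$) and the trivial $\int_0^1\abs{h}^6\d\tet$, which is $O(B^{3+\eps})$ by restricting to the smooth set $\calA_\eta^*$. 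A short arithmetic with the exponents, using the Weyl saving $B^{3/4}$ for the factor of $g$ that is pulled out on $\grm$, then yields the exponent $2t-13/4-8\tau$ with a little room to absorb the $\eps$'s into the $\tau$-saving; for $t\ge 4$ the extra factors $\abs{g(\lam_i\tet)}^2$ are simply estimated trivially by $B^2$ each, or folded in via (\ref{2.1}) again, which is strictly easier.

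For the first (full unit interval) estimate one argues identically but without the Weyl saving: $\int_0^1\abs{E_3(\tet)}^2\d\tet$ is handled by H\"older splitting $\abs{g(\lam_3\tet)}^2\abs{h(\lam_1\tet)h(\lam_2\tet)}^2$ into a geometric-mean combination of $\int_0^1\abs{g(a\tet)^2h(b\tet)^4}\d\tet\ll B^{13/4-9\tau}$ and $\int_0^1\abs{h}^6\ll B^{3+\eps}$; interpolating gives an exponent of the form $3+\theta(1/4-9\tau)+\eps$ for a suitable weight $\theta<1$, and choosing the split so that $\theta$ is bounded away from $1$ leaves enough of the $9\tau$ saving to dominate $\eps$ and to land on $2t-11/4-9\tau$ when $t=3$. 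Again $t\ge 4$ only adds trivial $B^2$ factors per extra $g$. The main obstacle --- and the one point requiring genuine care rather than bookkeeping --- is arranging the H\"older exponents so that the fourth-moment input (\ref{2.1}) is applied with a coefficient pair $(a,b)$ to which it legitimately applies (i.e. fixed non-zero integers, after clearing denominators in the $\lam_i$), while simultaneously keeping the loss from the $\abs{h}^6$-factor small enough that a fixed positive multiple of $\tau_0$ survives; since (\ref{2.1}) holds for every $\tau_1<\tau_0$ and we have deliberately set $\tau=\tfrac1{10}\tau_0$ with $\tau_1=9\tau$, there is a comfortable margin, and the verification is routine once the weights are pinned down.
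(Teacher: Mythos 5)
Your treatment of the first estimate is, once the confusion is cleared away, the same as the paper's. You split $|g(\lam_3\tet)|^2|h(\lam_1\tet)|^2|h(\lam_2\tet)|^2$ by Schwarz into $(\int_0^1|g(\lam_3\tet)^2h(\lam_1\tet)^4|\d\tet)^{1/2}(\int_0^1|g(\lam_3\tet)^2h(\lam_2\tet)^4|\d\tet)^{1/2}$ and estimate the remaining $|g(\lam_i\tet)|$ $(4\le i\le t)$ trivially by $B$; combining with $(\ref{2.1})$ gives $B^{2t-6}\cdot B^{13/4-9\tau}=B^{2t-11/4-9\tau}$, which is exactly the claim. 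Your worry about needing a weight $\theta<1$ for the $|g^2h^4|$ brick and introducing a $|h|^6$ brick is a red herring: because the bricks $|g^2h^4|$ and $|h|^6$ both have total degree $6$, a H\"older split of $|g|^\sig|h_1|^2|h_2|^2$ (degree $\sig+4$) into such bricks forces $\sig=2$, and then $\theta=1$; one recovers precisely the Schwarz split and hits the exponent $2t-11/4-9\tau$ with nothing to spare. No interpolation room exists.

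The second estimate contains a genuine gap. You propose to prove the minor arc bound first for $t=3$ and then add trivial factors of $B^2$ for $t\ge 4$. But the lemma asserts the minor arc bound only for $t\ge 4$, and the bound you wish to establish for $t=3$, namely $\int_\grm|E_3(\tet)|^2\d\tet\ll B^{11/4-8\tau}$, is false: the integrand includes $|h(\lam_1\tet)h(\lam_2\tet)|^2$, and $\int_0^1|h(\lam_1\tet)h(\lam_2\tet)|^2\d\tet\gg B^2$ from diagonal solutions alone, so even with the best possible $\sup_\grm|g|\ll B^{3/4+\eps}$ one cannot descend below $B^{7/2+\eps}$ by these means. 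More concretely, the interpolation you propose --- pulling out $\sup_\grm|g|^{2-\sig}$ and treating $\int_0^1|g|^\sig|h_1|^2|h_2|^2\d\tet$ via H\"older against $|g^2h^4|$ and $|h|^6$ --- is blocked by the same degree count as above: the exponent matching $4p+6q=2$ together with $2p+2q=1$ forces $p=1/2$, $q=0$, $\sig=4p=2$, so no supremum is actually extracted and one only recovers the $\int_0^1$ bound. The working argument for $4\le t\le r$ extracts exactly one Weyl factor on $\grm$ and then applies the already-proved full-interval bound to $E_{t-1}$:
\begin{equation*}
\int_\grm|E_t(\tet)|^2\d\tet\le\Bigl(\sup_{\tet\in\grm}|g(\lam_t\tet)|\Bigr)^2\int_0^1|E_{t-1}(\tet)|^2\d\tet\ll(B^{3/4+\eps})^2B^{2(t-1)-11/4-9\tau},
\end{equation*}
which yields $B^{2t-13/4-9\tau+2\eps}\ll B^{2t-13/4-8\tau}$. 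This requires $t-1\ge 3$ so that $E_{t-1}$ still has a $g$-factor and $(\ref{2.1})$ applies; that is exactly why the lemma restricts to $t\ge 4$ in the minor arc case.
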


\begin{proof} An application of Schwarz's inequality, combined with the trivial estimate $|g(\lam_i\tet)|\le B$, reveals that
$$\int_0^1|E_t(\tet)|^2\d\tet \le B^{2t-6}\prod_{i=1}^2\Bigl( \int_0^1|g(\lam_3\tet)^2h(\lam_i\tet)^4|\d\tet \Bigr)^{1/2}.$$
From here, the first estimate of the lemma follows from (\ref{2.1}). In order to confirm the second estimate, we begin by noting that a modified version of Weyl's inequality (see \cite[Lemma 1]{Vau1986}) supplies the bound
\begin{equation}\label{2.11}
\sup_{\tet\in \grm}|g(\lam_t\tet)|\ll B^{3/4+\eps}.
\end{equation}
Thus, on utilising the mean value estimate just obtained, we deduce that
\begin{align*}\int_\grm |E_t(\tet)|^2\d\tet &\le \Bigl( \sup_{\tet\in\grm}|g(\lam_t\tet)|\Bigr)^2\int_0^1|E_{t-1}(\tet)|^2\d\tet\\
&\ll (B^{3/4+\eps})^2B^{2(t-1)-11/4-9\tau},
\end{align*}
and the second estimate of the lemma follows.
\end{proof}

\section{Systems of type A} The strategy employed in our proof of Theorem \ref{theorem1.1} is somewhat circuitous, and we illustrate ideas in this section by concentrating on systems of type A. We may assume for the present, therefore, that $l=3$ and $m\ge n\ge 4$. Our strategy for obtaining a lower bound for the number of solutions of the system (\ref{1.1}) counted by $N(B)$ involves constraining the first block of $l$ variables. Restricting in such a manner that the corresponding diagonal forms $a_1x_1^3+\ldots +a_lx_l^3$ and $b_1x_1^3+\ldots +b_lx_l^3$ behave essentially as expected so far as multiplicity of representations is concerned, we obtain a modified counting function $N_0(B)$ whose behaviour is mollified by this arithmetic smoothing. In Lemma \ref{lemma3.3} we show that the major arc contribution within $N_0(B)$ is close to the corresponding contribution within $N(B)$. By means of a pruning operation discussed in Lemma \ref{lemma3.1}, this contribution is seen via Lemma \ref{lemma2.1} to have order of growth $B^{s-6}$. Meanwhile, the minor arc contribution within $N_0(B)$ exploits the available smoothing by means of Bessel's inequality, and is described in mixed form in Lemma \ref{lemma3.4}, and pure minor arc form in Lemma \ref{lemma3.5}. In this way, we aim to show that
$$N(B)\ge N_0(B)\ge N(B;\grN)+o(B^{s-6})\gg B^{s-6},$$
and thereby establish Theorem \ref{theorem1.1}.\par

Our first step in the above plan is to prune from the set of arcs $\grM\times \grM$ to the narrow major arcs $\grN$. We apply Lemmata \ref{lemma2.2} and \ref{lemma2.3} in order to estimate the contribution of the set of arcs $\grK$ defined in (\ref{2.6}), a set we divide into the two subsets
$$\grK_0=\{(\alp,\bet)\in \grK:a_l\alp+b_l\bet\in \grM\}$$
and
$$\grK_1=\{ (\alp,\bet)\in \grK:a_l\alp+b_l\bet\in \grm\}.$$

\begin{lemma}\label{lemma3.1} For systems of type A, one has $N(B;\grK)\ll B^{s-6}\calL^{-1}$.
\end{lemma}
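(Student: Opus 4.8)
The plan is to bound the contributions of $\grK_0$ and $\grK_1$ separately, in each case exploiting the factorisation $N(B;\grK_i)=\iint_{\grK_i}F(\alp,\bet)G(\alp)H(\bet)\d\alp\d\bet$ together with the structure of $F$, $G$ and $H$ from (\ref{2.2}) and (\ref{2.3}). Since we are in type A we have $l=3$, so the generating function $F(\alp,\bet)=h(a_1\alp+b_1\bet)h(a_2\alp+b_2\bet)g(a_3\alp+b_3\bet)$ contains exactly one $g$-factor with argument $a_l\alp+b_l\bet$, and this is the source of the dichotomy defining $\grK_0$ and $\grK_1$. First I would observe that on $\grK\subseteq \grM\times\grM$ we may bound $|G(\alp)|$ and $|H(\bet)|$ by their major-arc behaviour, while reserving the $h$-factors inside $F$ for a Cauchy--Schwarz argument in the variable $a_l\alp+b_l\bet$.

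For $\grK_0$, where $a_l\alp+b_l\bet\in\grM$, the idea is that all three `slots' of $F$ then live over major arcs. I would estimate
$$\int\int_{\grK_0}|F(\alp,\bet)G(\alp)H(\bet)|\,\d\alp\,\d\bet \le \Bigl(\sup_\bet\int_\grM|H(\bet)|\,\d\bet\Bigr)\cdot\Bigl(\text{something over }\alp\Bigr),$$
more precisely by first integrating out $\bet$ using $\int_\grM|H|\ll B^{n-3+\eps}$ from Lemma \ref{lemma2.3}, then handling the remaining $\alp$-integral of $|F||G|$ over the appropriate slice by Schwarz's inequality, pairing each $h$-factor of $F$ with a corresponding factor and invoking the major-arc mean value (\ref{2.9}) of Lemma \ref{lemma2.2} together with the trivial bound $B$ for surplus $g$-factors. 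Counting powers of $B$: $G$ contributes roughly $B^{m-3}$, $H$ contributes $B^{n-3}$, and the three slots of $F$ contribute $B^{l-3}\cdot B = B^{l-2}$ after the $h^2$-pairings give $B^{1+\eps}$ twice against a $B^{l-3}$ of trivial factors — the arithmetic is arranged so that the total is $B^{s-6+\eps}$, which is slightly too weak; the saving of $\calL^{-1}$ must come from the fact that $\grK_0$ excludes the narrow arcs $\grN$, so that on $\grK_0$ at least one of the denominators $q$ exceeds $Q=L^{1/100}$, and a standard argument (as in the pruning of \cite{BW2007a}) converts this into a factor $Q^{-\del}\ll\calL^{-1}$ in the singular-series-type sum. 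I would make this explicit by splitting $\grM$ dyadically in $q$ and summing a convergent series whose tail beyond $Q$ is small.

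For $\grK_1$, where $a_l\alp+b_l\bet\in\grm$, I would instead exploit the minor-arc Weyl bound (\ref{2.11}), namely $\sup_{\tet\in\grm}|g(\lam\tet)|\ll B^{3/4+\eps}$, to replace the single $g$-factor $g(a_l\alp+b_l\bet)$ in $F$ by $B^{3/4+\eps}$. This costs $B^{3/4+\eps}$ rather than the trivial $B$, a genuine saving of $B^{1/4-\eps}$, which comfortably beats the logarithmic target. The remaining integral is $B^{3/4+\eps}\int\int_{\grM\times\grM}|h(a_1\alp+b_1\bet)h(a_2\alp+b_2\bet)G(\alp)H(\bet)|\,\d\alp\,\d\bet$, and this I would bound by Schwarz's inequality after integrating $\bet$ out against $H$ on $\grM$ via Lemma \ref{lemma2.3}, then pairing the two remaining $h$-factors with the $h$-factors in $G$ (type A has $m\ge4$, so there are at least two such) via (\ref{2.9}). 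The power count now comes in at $B^{s-6-1/4+\eps}$, which is $\ll B^{s-6}\calL^{-1}$ with room to spare.

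The main obstacle I anticipate is bookkeeping the power of $B$ precisely enough in the $\grK_0$ case to see that one lands exactly at the convexity threshold $B^{s-6+\eps}$ and not above it, and then correctly extracting the $\calL^{-1}$ from the exclusion of $\grN$. Unlike $\grK_1$, there is no polynomial saving available on $\grK_0$: every available Weyl-type bound is already `used up' by the major-arc mean values, so the entire gain is the logarithmic factor coming from the lower bound $q>Q$ on $\grK_0$. Making that rigorous requires tracking the $q$-dependence through the major-arc estimates of Lemma \ref{lemma2.2} — effectively one needs the refinement that $\int_{\grM(q,a)}|g(a\tet)h(b\tet+\lam)|^2\d\tet$ carries a factor decaying like a negative power of $q$, summed against the Gauss-sum bounds $A(q)\ll q^{\eps-4/3}$ already established in the proof of Lemma \ref{lemma2.1}. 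This is routine in spirit but is where the real care lies; the $\grK_1$ estimate, by contrast, should be essentially immediate once (\ref{2.11}) is in hand.
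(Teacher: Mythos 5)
Your treatment of $\grK_1$ is in the right spirit and would work: the single $g$-factor of $F$ has argument in $\grm$, and the Weyl bound (\ref{2.11}) gives a polynomial saving. The paper's own implementation is simpler still — it bounds $F$ uniformly by $\sup_{(\alp,\bet)\in\grK_1}|F(\alp,\bet)|\ll B^{l-1/4+\eps}$ (trivial bound $B$ on each $h$-factor, Weyl on the $g$-factor) and then integrates $G$ and $H$ separately via Lemma \ref{lemma2.3}, getting $N(B;\grK_1)\ll B^{s-49/8}$. Your plan to ``integrate $\bet$ out against $H$ via Lemma \ref{lemma2.3}'' while keeping the $h$-factors of $F$ around is awkward, since those $h$-factors also depend on $\bet$; the uniform bound is cleaner and costs nothing.

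The $\grK_0$ case is where the proposal has a genuine gap. You correctly diagnose that the naive power count lands at the convexity threshold $B^{s-6+\eps}$, that there is no polynomial room, and that the saving must come from the exclusion of $\grN$. But the mechanism you propose for extracting it — dyadic decomposition of $\grM$ in $q$ and tracking $q$-dependence through refined, $q$-localized versions of Lemma \ref{lemma2.2} — is not what the paper does, and it is substantially harder than you acknowledge: Lemma \ref{lemma2.2} involves the smooth Weyl sum $h$, for which obtaining $q$-localized major-arc estimates with controlled decay in $q$ is a nontrivial undertaking that the paper neither carries out nor needs. The paper's actual device is a H\"older inequality that isolates a product $\Psi(\alp,\bet)=\prod_{i=1}^2|h(c_i\alp)h(d_i\bet)h(a_i\alp+b_i\bet)^3|$ of ten $h$-factors, for which one has the \emph{pointwise} bound $\sup_{(\alp,\bet)\in\grn}\Psi(\alp,\bet)\ll B^{10}Q^{-1/10}$ (equation (\ref{3.7}), quoting \cite[Lemma 10]{BW2007a}); this single supremum carries all of the logarithmic saving. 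The remaining H\"older factors are the mean values $J_0$ and $J_{i,l}^G$, $J_{i,l}^H$, all of size $O(B^3)$ by Lemma \ref{lemma2.2}, and the exponents $(1/5,\,4\times 1/10,\,3/5)$ are chosen so that the $B$-power lands exactly at $B^{s-6}$ with a residual $Q^{-1/50}\ll\calL^{-1}$. Without identifying this pointwise bound on the $h$-product, the crucial $\calL^{-1}$ saving on $\grK_0$ remains unestablished in your argument, and the route you do propose would require additional lemmas that are not in hand.
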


\begin{proof} We first consider the contribution of the set $\grK_1$. Observe that as a consequence of the modified version of Weyl's inequality (\ref{2.11}), one has
$$\sup_{(\alp,\bet)\in \grK_1}|F(\alp,\bet)|\ll B^{l-1}\sup_{a_l\alp+b_l\bet\in \grm}|g(a_l\alp+b_l\bet)|\ll B^{l-1/4+\eps}.$$
Then we deduce from Lemma \ref{lemma2.3} that
\begin{align}
N(B;\grK_1)&\ll \Bigl(\sup_{(\alp,\bet)\in \grK_1}|F(\alp,\bet)|\Bigr)\int_\grM |G(\alp)|\d\alp\int_\grM|H(\bet)|\d\bet \notag \\
&\ll B^{l-1/4+\eps}(B^{m-3+\eps})(B^{n-3+\eps})\ll B^{s-49/8}.\label{3.1}
\end{align}

\par Turning our attention next to the contribution from the set $\grK_0$, we begin by considering the functions
\begin{align}
\Phi_G(\alp)&=|g(c_3\alp)g(c_4\alp)|^{5/4}|h(c_1\alp)h(c_2\alp)|,\label{3.2}\\
\Phi_H(\bet)&=|g(d_3\bet)g(d_4\bet)|^{5/4}|h(d_1\bet)h(d_2\bet)|,\notag
\end{align}
and their mean values
$$I_G=\int_\grM \Phi_G(\alp)\d\alp \quad \text{and}\quad I_H=\int_\grM\Phi_H(\bet)\d\bet .$$
With an application of Schwarz's inequality mirroring that employed in the proof of Lemma \ref{lemma2.3}, followed by recourse to Lemma \ref{lemma2.2}, one obtains
\begin{equation}\label{3.3}
I_G\le \prod_{i=1}^2\Bigl( \int_\grM |g(c_{2+i}\alp)|^{5/2}|h(c_i\alp)|^2\d\alp\Bigr)^{1/2}\ll B^{3/2},
\end{equation}
and a symmetric argument yields $I_H\ll B^{3/2}$. Writing
\begin{equation}\label{3.4}
J_0=\int_\grM\int_\grM\Phi_G(\alp)\Phi_H(\bet)\d\alp\d\bet ,
\end{equation}
therefore, we deduce that $J_0=I_GI_H\ll B^3$.\par

Next, when $i\in \{1,2\}$, $k\in \{l-1,l\}$ and $E$ is either $G$ or $H$, define
\begin{equation}\label{3.5}
J_{i,k}^E=\iint_{\grK_0}|g(a_k\alp+b_k\bet)|^{5/2}|\Phi_E(\alp)h(a_i\alp+b_i\bet)^2|\d\alp\d\bet .
\end{equation}
We note that at present we require these integrals only when $k=l$, though in \S4 we make use of them also when $k=l-1$. By means of a change of variable, one discerns from (\ref{3.3}) and Lemma \ref{lemma2.2} that
\begin{align}
J_{i,l}^G&\ll \int_\grM \Phi_G(\alp)\sup_{\lam\in \dbR}\int_\grM|g(\gam)|^{5/2}|h(b_ib_l^{-1}\gam+\lam)|^2\d\gam\d\alp \notag \\
&\ll B^{3/2}I_G\ll B^3,\label{3.6}
\end{align}
and in an analogous manner one obtains $J_{i,l}^H\ll B^3$. Finally, we put
$$\Psi(\alp,\bet)=\prod_{i=1}^2|h(c_i\alp)h(d_i\bet)h(a_i\alp+b_i\bet)^3|.$$
Then, as a consequence of the argument of the proof of \cite[Lemma 10]{BW2007a} (see in particular the display preceding \cite[equation (6.14)]{BW2007a}), one has
\begin{equation}\label{3.7}
\sup_{(\alp,\bet)\in \grn}\Psi(\alp,\bet)\ll B^{10}Q^{-1/10}.
\end{equation}

\par By applying H\"older's inequality in combination with the estimates assembled above, and applying a trivial bound for $g(\tet)$, we conclude that
\begin{align*}
N(B;\grK_0)&\le g(0)^{s-11}\Bigl(\sup_{(\alp,\bet)\in \grn}\Psi(\alp,\bet)\Bigr)^{1/5}(J_{1,l}^GJ_{2,l}^GJ_{1,l}^HJ_{2,l}^H)^{1/10}J_0^{3/5}\\
&\ll B^{s-11}(B^{10}Q^{-1/10})^{1/5}(B^{12})^{1/10}(B^3)^{3/5}=B^{s-6}Q^{-1/50}.\end{align*}
On recalling (\ref{3.1}), therefore, we obtain the upper bound
$$N(B;\grK)=N(B;\grK_0)+N(B;\grK_1)\ll B^{s-6}Q^{-1/50},$$
and this completes the proof of the lemma.
\end{proof}

Thus far our argument presents the appearance of a conventional application of the Hardy-Littlewood method. It is at this point that unconventional elements are introduced. When $u,v\in \dbZ$, denote by $\rho(u,v)$ the number of integral solutions of the system
\begin{align}
a_1y_1^3+\ldots +a_ly_l^3&=u,\label{3.8}\\
b_1y_1^3+\ldots +b_ly_l^3&=v,\label{3.9}
\end{align}
with $y_1,y_2\in \calA_\eta^*(B)$ and $\nu B<y_i\le B$ $(3\le i\le l)$. In addition, write $\rho_1(u)$ for the number of integral solutions of (\ref{3.8}), and $\rho_2(v)$ for the number of integral solutions of (\ref{3.9}), subject to the same conditions on $\bfy$. Then if we put
$$\Ome=\sum_{i=1}^l(|a_i|+|b_i|)\quad \text{and}\quad \grX=[-\Ome B^3,\Ome B^3]\cap \dbZ,$$
one finds that
\begin{equation}\label{3.10}
\rho_1(u)=\sum_{v\in \grX}\rho(u,v)\quad \text{and}\quad \rho_2(v)=\sum_{u\in \grX}\rho(u,v). 
\end{equation}
The arithmetic smoothing to which we alluded in the introduction of this section is achieved by dividing the set $\grX^2$ into three subsets, calibrated by a truncation parameter $T$. We define the sets $\grX_i=\grX_i(T)$ for $i=0,1,2$ by taking
\begin{align}
\grX_0(T)&=\{ (u,v)\in \grX^2:\text{$\rho_1(u)\le T$ and $\rho_2(v)\le T$}\},\notag \\
\grX_1(T)&=\{ (u,v)\in \grX^2:\text{$\rho_1(u)>T$ and $\rho_2(v)\le T$}\},\label{3.11}\\
\grX_2(T)&=\{ (u,v)\in \grX^2:\rho_2(v)>T\},\notag 
\end{align}
so that
\begin{equation}\label{3.12}
\grX_0(T)=\grX^2\setminus (\grX_1(T)\cup \grX_2(T)).
\end{equation}
For systems of type A, we fix the truncation parameter to be $T=B^{l-11/4}$.\par

At this point we pause to establish an auxiliary estimate for the quantity
$$\Xi_i=\sum_{(u,v)\in \grX_i}\rho(u,v)\quad (i=1,2).$$

\begin{lemma}\label{lemma3.2}
For systems of type A, one has $\Xi_i\ll B^{l-9\tau}$ $(i=1,2)$.
\end{lemma}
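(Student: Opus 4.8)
The plan is to bound $\Xi_i$ by relating the sum $\sum_{(u,v)\in\grX_i}\rho(u,v)$ to a count that is controlled by the pair of diagonal cubic forms in the first block, and then to invoke the ``few large fibers'' type estimate that is implicit in Lemma \ref{lemma2.4} together with the definition of the truncation parameter $T=B^{l-11/4}$. By symmetry in the roles of $u$ and $v$ (interchanging $\bfa$ with $\bfb$ and $G$-type with $H$-type data), it suffices to treat $\Xi_2$, say, where the defining condition is $\rho_2(v)>T$.

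First I would use the relations \eqref{3.10} to write
\[
\Xi_2=\sum_{(u,v)\in\grX_2}\rho(u,v)=\sum_{\substack{v\in\grX\\ \rho_2(v)>T}}\sum_{u\in\grX}\rho(u,v)=\sum_{\substack{v\in\grX\\ \rho_2(v)>T}}\rho_2(v).
\]
Next I would bound this weighted count over the ``popular'' values $v$ by a standard device: since $\rho_2(v)>T$ on this range, one has $\rho_2(v)\le T^{-1}\rho_2(v)^2$, whence $\Xi_2\le T^{-1}\sum_{v\in\grX}\rho_2(v)^2$. The quantity $\sum_{v}\rho_2(v)^2$ counts solutions of $b_1y_1^3+\dots+b_ly_l^3=b_1y_1'^3+\dots+b_ly_l'^3$ with all variables subject to the same smooth/interval constraints, i.e. it is exactly a mean value of the shape $\int_0^1|E_l(\tet)|^2\,\d\tet$ with $\bflam$ a suitable rescaling of $\bfb$: here two of the variables (namely $y_1,y_2$ and their primed counterparts, contributing four $h$-factors) are smooth, and the remaining $l-2$ variables on each side contribute $g$-factors. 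Since $l\ge 3$ for systems of type A, Lemma \ref{lemma2.4} (first estimate, with $t=l$, which is legitimate provided $l\le r$ — and we may take $r$ as large as needed) yields
\[
\sum_{v\in\grX}\rho_2(v)^2\ll B^{2l-11/4-9\tau}.
\]
Combining, $\Xi_2\ll T^{-1}B^{2l-11/4-9\tau}=B^{11/4-l}\cdot B^{2l-11/4-9\tau}=B^{l-9\tau}$, which is the claimed bound; the argument for $\Xi_1$ is identical with $u$ in place of $v$ and $\bfa$ in place of $\bfb$.

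The main obstacle I anticipate is the bookkeeping needed to express $\sum_v\rho_2(v)^2$ honestly as one of the mean values covered by Lemma \ref{lemma2.4}: one must check that the coefficients arising (the $b_i$, possibly after clearing denominators or absorbing signs) are nonzero integers as required, that exactly the right number of $h$ versus $g$ factors appear so that the exponent $2l-\tfrac{11}{4}-9\tau$ is the one that comes out, and that the parameter $t=l$ is within the admissible range $3\le t\le r$ — which simply means choosing $r$ at the outset to dominate every value of $l$ in play (at most $5$). A secondary point is that the variables $y_1,y_2$ range over $\calA_\eta^*(B)$ rather than a full interval, but this only restricts the solution set, so all upper bounds remain valid; and since the estimate of Lemma \ref{lemma2.4} already has the smoothed sum $h$ built in, there is no loss. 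No genuinely new idea is required beyond the orthogonality identity and the elementary inequality $\rho_2(v)\le T^{-1}\rho_2(v)^2$ on the popular range.
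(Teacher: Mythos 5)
Your argument is correct and follows the paper's proof essentially step for step: bound $\Xi_i$ by $T^{-1}$ times the relevant second moment $\sum \rho_i^2$, identify that second moment via orthogonality with $\int_0^1|F(\alpha,0)|^2\,\d\alpha$ (resp. $\int_0^1|F(0,\beta)|^2\,\d\beta$), and apply the first estimate of Lemma~\ref{lemma2.4} with $t=l$. The only cosmetic difference is that you treat $\Xi_2$ first and the paper treats $\Xi_1$ first (dropping the extra constraint $\rho_2(v)\le T$ in the definition of $\grX_1$), which is the same inequality in either order.
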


\begin{proof} Observe first that in view of (\ref{3.10}) we have
$$\Xi_1\le \sum_{\substack{u\in \grX\\ \rho_1(u)>B^{l-11/4}}}\sum_{v\in \grX}\rho(u,v)\le B^{11/4-l}\sum_{u\in \grX}\rho_1(u)^2.$$
On considering the underlying Diophantine equation and applying Lemma \ref{2.4}, one sees that
\begin{equation}\label{3.13}
\sum_{u\in \grX}\rho_1(u)^2=\int_0^1|F(\alp,0)|^2\d\alp\ll B^{2l-11/4-9\tau}.
\end{equation}
We therefore conclude that when $i=1$, one has $\Xi_i\ll B^{l-9\tau}$, and a symmetrical variant of this argument delivers the same bound when $i=2$. 
\end{proof}

Next, when $\grC$ and $\grD$ are measurable subsets of $[0,1)$, we put
$$R_1(u;\grC)=\int_\grC G(\alp)e(\alp u)\d\alp \quad \text{and}\quad R_2(v;\grD)=\int_\grD H(\bet)e(\bet v)\d\bet .$$
One then obtains
\begin{equation}\label{3.14}
N(B;\grC\times \grD)=\sum_{(u,v)\in \grX^2}\rho(u,v)R_1(u;\grC)R_2(v;\grD).
\end{equation}
Writing
\begin{equation}\label{3.15}
N_0(B;\grC,\grD)=\sum_{(u,v)\in \grX_0}\rho(u,v)R_1(u;\grC)R_2(v;\grD),
\end{equation}
the starting point for our analysis is the lower bound
\begin{equation}\label{3.16}
N(B)\ge N_0(B;[0,1),[0,1)).
\end{equation}
Our Hardy-Littlewood dissection is now executed by disassembling the set $[0,1)\times [0,1)$ into the four pieces
$$\grM\times \grM,\quad \grM\times \grm,\quad \grm\times \grM\quad \text{and}\quad \grm\times \grm.$$
We examine each of these subsets in turn. 

\begin{lemma}\label{lemma3.3} For systems of type A, one has
$$N_0(B;\grM,\grM)-N(B;\grM\times \grM)\ll B^{s-6-\tau}.$$
\end{lemma}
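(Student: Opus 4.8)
The plan is to estimate the difference
$$N(B;\grM\times\grM)-N_0(B;\grM,\grM)=\sum_{(u,v)\in\grX_1\cup\grX_2}\rho(u,v)R_1(u;\grM)R_2(v;\grM),$$
which follows immediately from comparing (\ref{3.14}) with (\ref{3.15}) and recalling the decomposition (\ref{3.12}). The whole task is therefore to control a sum over the ``bad'' pairs $(u,v)$ where one of the diagonal forms is represented anomalously often, and here we want to exploit the restriction of $\alp$ and $\bet$ to the one-dimensional major arcs $\grM$, where $G$ and $H$ are well-behaved. First I would bound $R_1(u;\grM)$ and $R_2(v;\grM)$ by quantities that, after summing against $\rho(u,v)$, reconstitute the mean-square integrals that Lemma \ref{lemma2.2} and Lemma \ref{lemma2.3} already furnish; the point of the weights $\rho_1(u)$, $\rho_2(v)$ appearing in the definition of $\grX_1,\grX_2$ is precisely that summation of $\rho(u,v)$ over $v\in\grX$ (or $u\in\grX$) collapses via (\ref{3.10}) to these single-form counting functions.

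Concretely, split according to whether $(u,v)\in\grX_1$ or $(u,v)\in\grX_2$. For the $\grX_2$ contribution, use $|R_2(v;\grM)|\le\int_\grM|H(\bet)|\d\bet\ll B^{n-3+\eps}$ from Lemma \ref{lemma2.3}, and then bound $|R_1(u;\grM)|$ by the same trivial integral $\ll B^{m-3+\eps}$, so that the whole $\grX_2$ sum is at most $B^{s-l-6+\eps}\,\Xi_2\ll B^{s-l-6+\eps}\cdot B^{l-9\tau}=B^{s-6-9\tau+\eps}$ by Lemma \ref{lemma3.2}. The same estimate handles $\grX_1$, using $\Xi_1\ll B^{l-9\tau}$. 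Since $\tau=\tfrac1{10}\tau_0$ is a fixed positive constant, $9\tau-\eps>\tau$ for suitably small $\eps$, and we obtain the claimed bound $B^{s-6-\tau}$. In fact this crude argument already suffices, because Lemma \ref{lemma3.2} has done the heavy lifting: it converted the smoothing hypothesis into a genuine power saving on $\Xi_i$, and the major-arc $L^1$-bounds of Lemma \ref{lemma2.3} are lossless up to $B^\eps$ relative to the trivial count $B^{m-3}$, respectively $B^{n-3}$, that one expects.

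The one point that requires a little care is that in the $\grX_1$ case we are using the \emph{trivial} bound $|R_2(v;\grM)|\ll B^{n-3+\eps}$ rather than exploiting $\rho_2(v)\le T$, and symmetrically for $\grX_2$; one should check that this does not cost anything we cannot afford, which it does not, since the full force of the saving is already present in $\Xi_i$. An alternative, slightly sharper route --- should one wish to avoid even a whiff of wastefulness --- is to apply Cauchy--Schwarz in the $(u,v)$ sum against $\rho(u,v)$, writing $\rho(u,v)\le\rho(u,v)^{1/2}\rho(u,v)^{1/2}$ and distributing one factor to each of $R_1,R_2$; this reproduces integrals of the shape $\int_\grM|G(\alp)|^2\d\alp$ weighted by $\rho_1$, to which Lemma \ref{lemma2.2} applies after the change of variables used in Lemma \ref{lemma2.3}. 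But I expect the first, direct argument to be cleanest, and the main (very mild) obstacle is simply bookkeeping the exponents $l,m,n$ uniformly across the several shapes in class A so that $s-l-6+l-9\tau=s-6-9\tau$ in every case. No new input beyond Lemmata \ref{lemma2.3} and \ref{lemma3.2} is needed.
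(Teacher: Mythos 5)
Your proposal is correct and follows exactly the paper's argument: bound $R_1(u;\grM)$ and $R_2(v;\grM)$ trivially via Lemma \ref{lemma2.3} (i.e.\ the bounds in (\ref{3.17})), then sum $\rho(u,v)$ over $\grX_1\cup\grX_2$ to bring in $\Xi_1+\Xi_2$, and apply Lemma \ref{lemma3.2}. The Cauchy--Schwarz refinement you float at the end is unnecessary, as you correctly anticipated.
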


\begin{proof} By applying the triangle inequality in combination with Lemma \ref{lemma2.3}, we find that
\begin{equation}\label{3.17}
R_1(u;\grM)\ll B^{m-3+\eps}\quad \text{and}\quad R_2(v;\grM)\ll B^{n-3+\eps}.
\end{equation}
Consequently, on recalling (\ref{3.12}), (\ref{3.14}) and (\ref{3.15}), and then applying Lemma \ref{lemma3.2}, we deduce that
\begin{align*}
N(B;\grM\times \grM)-N_0(B;\grM,\grM)&=\sum_{(u,v)\in \grX_1\cup \grX_2}\rho(u,v)R_1(u;\grM)R_2(v;\grM)\notag \\
&\ll B^{s-l-6+\eps}(\Xi_1+\Xi_2)\ll B^{s-6-9\tau+\eps}.
\end{align*}
This completes the proof of the lemma.
\end{proof}

\begin{lemma}\label{lemma3.4} For systems of type A, one has
$$N_0(B;\grM,\grm)\ll B^{s-6-\tau}\quad \text{and}\quad N_0(B;\grm,\grM)\ll B^{s-6-\tau}.$$
\end{lemma}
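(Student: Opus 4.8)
The plan is to bound $N_0(B;\grM,\grm)$ by exploiting the arithmetic smoothing encoded in the constraint $(u,v)\in \grX_0$, which forces $\rho_1(u)\le T=B^{l-11/4}$, together with Bessel's inequality applied to the coefficients $R_2(v;\grm)$. The symmetric statement for $N_0(B;\grm,\grM)$ follows by interchanging the roles of the two equations, so I would treat only the first bound in detail.

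First I would start from the representation (\ref{3.15}) and split off the first block of $l$ variables. Since $\grX_0\subseteq \grX^2$ and all $\rho$-weights are non-negative, one has
\begin{equation*}
N_0(B;\grM,\grm)\le \sum_{\substack{(u,v)\in \grX^2\\ \rho_1(u)\le T}}\rho(u,v)\bigl|R_1(u;\grM)\bigr|\bigl|R_2(v;\grm)\bigr|.
\end{equation*}
On (\ref{3.17}) we have $R_1(u;\grM)\ll B^{m-3+\eps}$ uniformly in $u$, so the inner sum over $v$ can be handled by Cauchy--Schwarz: bounding $\sum_v \rho(u,v)|R_2(v;\grm)| \le (\sum_v\rho(u,v))^{1/2}(\sum_v\rho(u,v)|R_2(v;\grm)|^2)^{1/2}$. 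Here $\sum_{v}\rho(u,v)=\rho_1(u)\le T$ by (\ref{3.10}) and the truncation. For the weighted mean square of $R_2(v;\grm)$ I would sum over $u$ first, using $\sum_u\rho(u,v)=\rho_2(v)$ and then Bessel's inequality: since $R_2(v;\grm)=\int_\grm H(\bet)e(\bet v)\d\bet$ are Fourier coefficients of $H\mathbf{1}_\grm$, one gets $\sum_v \rho_2(v)|R_2(v;\grm)|^2 \ll (\sup_v\rho_2(v))\sum_v|R_2(v;\grm)|^2$ — but $\sup_v\rho_2(v)$ need not be small, so instead I would organise the double sum more carefully: interchange, writing $\sum_{u,v}\rho(u,v)|R_2(v;\grm)|^2 = \sum_v\rho_2(v)|R_2(v;\grm)|^2$, and recognise the right side as $\int_\grm\int_\grm H(\bet_1)\overline{H(\bet_2)}\bigl(\sum_v\rho_2(v)e((\bet_1-\bet_2)v)\bigr)\d\bet_1\d\bet_2 \ll \int_\grm|H(\bet)|^2\bigl(\sup_v\rho_2(v)\bigr)\d\bet$ is again not quite what I want. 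The clean route is: bound $|R_1(u;\grM)|\le B^{m-3+\eps}$, pull out $\sum_u\rho_1(u)\le$ (total count of the $l$-block) $\ll B^l$, but retain the $\rho_1(u)\le T$ cap on one factor, and then apply Cauchy--Schwarz in $u$ to land on $\sum_u\rho_1(u)^{\!}$-type quantities of the form $\int_0^1|F(\alp,0)|^2\d\alp$ controlled by Lemma \ref{lemma2.4}, together with $\sum_v\rho_2(v)|R_2(v;\grm)|^2 = \int_\grm|H(\bet)|^2 F(0,\bet)\,$-type integrals, which after the trivial bound $g(0)^{s-l-6}$ for the uninvolved variables reduces to $\int_\grm|E_t(\bet)|^2\d\bet$ with $t=n$ — controlled by the second estimate of Lemma \ref{lemma2.4}.

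Carrying the exponents: the $h$-pair in $H$ and the at-least-four $g$'s (using $n\ge 4$ for type A) give, via Lemma \ref{lemma2.4}, $\int_\grm|H(\bet)|^2\d\bet \ll B^{2n-13/4-8\tau}$ after inserting $g(0)^{n-4}=B^{n-4}$ for the superfluous factors, so $\sum_v|R_2(v;\grm)|^2 = \int_\grm|H(\bet)|^2\d\bet \ll B^{2n-13/4-8\tau}$. Combining $|R_1(u;\grM)|\ll B^{m-3+\eps}$, the cap $\rho_1(u)\le T=B^{l-11/4}$, the identity $\sum_u\rho_1(u)\ll B^{l}$, and $\sum_v\rho_2(v)\le$ (a trivial $B^l$-type bound is wasteful; instead I use $\rho(u,v)\le\rho_1(u)$ and sum $v$ against $|R_2|^2$ directly). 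Schematically, $N_0(B;\grM,\grm)\le B^{m-3+\eps}\sum_u\sum_v\rho(u,v)|R_2(v;\grm)| \le B^{m-3+\eps}(\sum_{u,v}\rho(u,v))^{1/2}(\sum_{u,v}\rho(u,v)|R_2(v;\grm)|^2)^{1/2}$ would lose too much; the correct weighting, used already in \cite[\S7]{BW2007a}, is to keep the truncation: $\sum_v\rho(u,v)|R_2(v;\grm)|\le \rho_1(u)^{1/2}(\sum_v\rho(u,v)|R_2(v;\grm)|^2)^{1/2}\le T^{1/2}(\sum_v\rho(u,v)|R_2(v;\grm)|^2)^{1/2}$, then sum over $u$ and apply Cauchy--Schwarz once more to get $\le T^{1/2}(\sum_u 1_{\rho_1(u)>0})^{1/2}(\sum_{u,v}\rho(u,v)|R_2(v;\grm)|^2)^{1/2}$, with $\sum_u 1\ll B^3$ and $\sum_{u,v}\rho(u,v)|R_2(v;\grm)|^2 = \int_\grm|H(\bet)|^2 F(0,-\bet)\d\bet \ll B^{l-2}\cdot B^{2n-13/4-8\tau+\eps}$ on inserting trivial bounds $B$ for the $l-2$ full sums in $F(0,\cdot)$ and keeping the two $h$-factors, which are $O(B)$ each — more carefully one uses Lemma \ref{lemma2.4} with the $h$'s from $F(0,\cdot)$ absorbed. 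Assembling the powers of $B$ and recalling $T=B^{l-11/4}$, $s=l+m+n$, all the $\tau$-savings survive and one reaches an exponent $s-6-\tau$ (with room to spare, since the genuine saving is a fixed multiple of $\tau$); choosing the implicit $\eps$ smaller than $\tau$ absorbs the $B^\eps$ factors.

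The main obstacle I anticipate is bookkeeping the exponents so that the $\tau$-savings from Lemma \ref{lemma2.4} are not swamped when one trades the arithmetic cap $\rho_1(u)\le T$ against the mean-square of $R_2(v;\grm)$: the trivial factors $g(0)=B$ inserted for the $s-l-6$ variables not appearing in the key mean values, together with the $B^{m-3+\eps}$ from $R_1(u;\grM)$ and the $B^{l-11/4}$ from $T$, must combine with $B^{2n-13/4-8\tau}$ to give exactly $B^{s-6-c\tau}$ for some $c>0$. The delicate point is that one must use the minor-arc mean value (second estimate of Lemma \ref{lemma2.4}, valid because $n\ge 4$ for type A) for the $v$-integral, while the major-arc bound (Lemma \ref{lemma2.3}) suffices for the $u$-side; verifying $n\ge 4$ is exactly where the type A hypothesis $m\ge n\ge 4$ enters. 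Once the arithmetic is arranged so that the cap contributes $B^{l-11/4}$ rather than $B^{l-3}$, the extra saving of $B^{-1/4}$ over the trivial truncation, compounded with the $B^{-8\tau}$ from the minor-arc mean value, comfortably yields the claimed bound $B^{s-6-\tau}$.
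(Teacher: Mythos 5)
Your proposal circles the right neighbourhood but never lands on the step that makes the argument close, and the ``clean route'' you eventually commit to yields a bound that is too weak to be useful. After pulling out $|R_1(u;\grM)|\ll B^{m-3+\eps}$ via (\ref{3.17}) and discarding the restriction to $\grX_0$ (which plays no role here), the sum to control is $\sum_{v\in\grX}\rho_2(v)|R_2(v;\grm)|$. The paper applies Cauchy's inequality in the form
$$\sum_{v\in\grX}\rho_2(v)|R_2(v;\grm)|\le\Bigl(\sum_{v\in\grX}\rho_2(v)^2\Bigr)^{1/2}\Bigl(\sum_{v\in\grX}|R_2(v;\grm)|^2\Bigr)^{1/2},$$
and the crucial ingredient is the observation that $\sum_v\rho_2(v)^2=\int_0^1|F(0,\bet)|^2\d\bet\ll B^{2l-11/4-9\tau}$ by the \emph{first} estimate of Lemma~\ref{lemma2.4}, while $\sum_v|R_2(v;\grm)|^2\le\int_\grm|H(\bet)|^2\d\bet\ll B^{2n-13/4-8\tau}$ by the second. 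Multiplying out gives the exponent $(m-3)+(l-\tfrac{11}{8}-\tfrac{9\tau}{2})+(n-\tfrac{13}{8}-4\tau)=s-6-\tfrac{17\tau}{2}$, comfortably within the target. You never form the quantity $\sum_v\rho_2(v)^2$; your attempts use either $\sup_v\rho_2(v)$ (which you correctly reject) or the truncation cap $\rho_1(u)\le T$ combined with a crude count $\sum_u 1_{\rho_1(u)>0}\ll B^3$. Neither recovers the needed saving; in particular, the truncation parameter $T$ is simply not the right tool for this lemma.

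Concretely, carrying through your ``clean route'' gives
$$N_0(B;\grM,\grm)\ll B^{m-3+\eps}\cdot T^{1/2}\cdot(B^3)^{1/2}\cdot\Bigl(\sum_v\rho_2(v)|R_2(v;\grm)|^2\Bigr)^{1/2}.$$
Even granting your estimate $B^{l-2+2n-13/4-8\tau+\eps}$ for the inner sum, the exponent works out to $m-3+\tfrac{1}{2}(l-\tfrac{11}{4})+\tfrac{3}{2}+\tfrac{1}{2}(l-2+2n-\tfrac{13}{4}-8\tau)=s-\tfrac{11}{2}-4\tau$, which exceeds $s-6$ and is therefore worse than trivial; the factor $B^3$ from counting non-empty residues $u$ is far too wasteful. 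Moreover the identity you invoke, $\sum_v\rho_2(v)|R_2(v;\grm)|^2=\int_\grm|H(\bet)|^2F(0,-\bet)\d\bet$, is false: expanding $|R_2(v;\grm)|^2$ and summing over $v$ produces the genuine double integral $\iint_{\grm\times\grm}H(\bet_1)\overline{H(\bet_2)}F(0,\bet_1-\bet_2)\d\bet_1\d\bet_2$, not a single integral with $\bet_1=\bet_2$. The missing idea throughout is to trade the count of $u$'s for the mean square $\sum_v\rho_2(v)^2$ controlled by Lemma~\ref{lemma2.4}; once that replacement is made, the truncation and the $B^3$ count both disappear from the argument.
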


\begin{proof} 
Applying the first upper bound of (\ref{3.17}) in concert with (\ref{3.10}), one sees that
\begin{align}
N_0(B;\grM,\grm)&\ll B^{m-3+\eps}\sum_{(u,v)\in \grX_0}\rho(u,v)|R_2(v;\grm)|\notag \\
&\le B^{m-3+\eps}\sum_{v\in \grX}\rho_2(v)|R_2(v;\grm)|.\label{3.18}
\end{align}
On the one hand, by applying Bessel's inequality together with the first estimate of Lemma \ref{lemma2.4}, one has
$$\sum_{v\in \grX}\rho_2(v)^2=\int_0^1|F(0,\bet)|^2\d\bet \ll B^{2l-11/4-9\tau}.$$
On the other hand, Bessel's inequality in combination with the second estimate of Lemma \ref{lemma2.4} yields the bound
$$\sum_{v\in \grX}|R_2(v;\grm)|^2\le \int_\grm|H(\bet)|^2\d\bet\ll B^{2n-13/4-8\tau}.$$
Consequently, by applying Cauchy's inequality to (\ref{3.18}), we obtain
$$N_0(B;\grM,\grm)\ll B^{m-3+\eps}\Bigl( \sum_{v\in \grX}\rho_2(v)^2\Bigr)^{1/2}\Bigl( \sum_{v\in \grX}|R_2(v;\grm)|^2\Bigr)^{1/2}\ll B^{s-6-8\tau}.$$
A symmetrical argument shows similarly that $N_0(B;\grm,\grM)\ll B^{s-6-8\tau}$, and thus the proof of the lemma is complete.
\end{proof}

\begin{lemma}\label{lemma3.5} For systems of type A, one has $N_0(B;\grm,\grm)\ll B^{s-6-\tau}$.
\end{lemma}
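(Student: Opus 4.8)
The plan is to bound $N_0(B;\grm,\grm)$ by splitting the sum over $(u,v)\in\grX_0$ into the two ranges governed by the minor arc $\grm$ in each variable, exactly mirroring the proof of Lemma~\ref{lemma3.4} but now using the smoothing condition $\rho_1(u)\le T$, $\rho_2(v)\le T$ in an essential way. Recall that $T=B^{l-11/4}$ for systems of type~A. First I would write
$$N_0(B;\grm,\grm)=\sum_{(u,v)\in\grX_0}\rho(u,v)R_1(u;\grm)R_2(v;\grm),$$
and observe that since $(u,v)\in\grX_0$ forces $\rho_1(u)\le T$, we have $\rho(u,v)\le\rho_1(u)\le T$, whence $\rho(u,v)\le\rho(u,v)^{1/2}T^{1/2}$. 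Applying Cauchy's inequality in the form
$$N_0(B;\grm,\grm)\le T^{1/2}\Bigl(\sum_{(u,v)\in\grX_0}\rho(u,v)|R_1(u;\grm)|^2|R_2(v;\grm)|^2\Bigr)^{1/2}\Bigl(\sum_{(u,v)\in\grX_0}\rho(u,v)\Bigr)^{1/2}$$
seems promising, but a cleaner route is to peel off $\rho(u,v)\le T^{1/2}\rho(u,v)^{1/2}$ and then exploit $\sum_{u,v}\rho(u,v)=F(0,0)\ll B^l$ for one factor while keeping a full $|R_1R_2|^2$ weighted by $\rho$ in the other. I would then reduce, via $\sum_u\rho(u,v)=\rho_2(v)$ and $\sum_v\rho(u,v)=\rho_1(u)$, to sums of the shape $\sum_v\rho_2(v)|R_2(v;\grm)|^2$, and bound $\rho_2(v)\le T$ on $\grX_0$ to pull it out, leaving $\sum_v|R_2(v;\grm)|^2\le\int_\grm|H(\bet)|^2\d\bet\ll B^{2n-13/4-8\tau}$ by Bessel and Lemma~\ref{lemma2.4}, and symmetrically for the $\grm$-integral in $\alp$.

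Concretely, the cleanest organisation is: bound $\rho(u,v)^{1/2}$ by splitting $\rho(u,v)=\rho(u,v)^{1/2}\cdot\rho(u,v)^{1/2}$, Cauchy–Schwarz over $\grX_0$ to get
$$N_0(B;\grm,\grm)\le\Bigl(\sum_{(u,v)\in\grX_0}\rho(u,v)|R_1(u;\grm)|^2\Bigr)^{1/2}\Bigl(\sum_{(u,v)\in\grX_0}\rho(u,v)|R_2(v;\grm)|^2\Bigr)^{1/2}.$$
In the first factor, summing out $v$ gives $\sum_u\rho_1(u)|R_1(u;\grm)|^2$; since $(u,v)\in\grX_0$ entails $\rho_1(u)\le T$, this is at most $T\sum_u|R_1(u;\grm)|^2\le T\int_\grm|G(\alp)|^2\d\alp$. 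But $\int_\grm|G(\alp)|^2\d\alp\ll B^{2m-13/4-8\tau}$ by the second estimate of Lemma~\ref{lemma2.4} (with the $\lam_i$ taken to be the $c_j$ and $t=m$, using $m\ge4$). Hence the first factor is $\ll(T B^{2m-13/4-8\tau})^{1/2}$, and symmetrically the second factor is $\ll(T B^{2n-13/4-8\tau})^{1/2}$. Multiplying, with $T=B^{l-11/4}$ and $s=l+m+n$,
$$N_0(B;\grm,\grm)\ll T\,B^{m+n-13/4-8\tau}=B^{l-11/4+m+n-13/4-8\tau}=B^{s-6-8\tau}\ll B^{s-6-\tau},$$
which is the claimed bound.

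The only genuinely delicate point is making sure the bookkeeping with $\grX_0$ is valid: one must keep the restriction $(u,v)\in\grX_0$ through the Cauchy–Schwarz step so that the truncations $\rho_1(u)\le T$ and $\rho_2(v)\le T$ are both available when needed, which they are because $\grX_0$ imposes \emph{both} simultaneously. A subsidiary check is that Lemma~\ref{lemma2.4} applies to $G$ and $H$ with $t=m$ and $t=n$ respectively: for type~A one has $m\ge n\ge4$, so the hypothesis $4\le t\le r$ of the second estimate is met, and the coefficient vectors $(c_1,\dots,c_m)$ and $(d_1,\dots,d_n)$ have nonzero entries as required. I expect no real obstacle here; the argument is a direct and slightly more efficient rerun of Lemma~\ref{lemma3.4}, the efficiency coming precisely from the fact that on the double minor arc we may afford to invoke the minor-arc mean value Lemma~\ref{lemma2.4} \emph{twice} rather than once.
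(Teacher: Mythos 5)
Your ``cleanest organisation'' is exactly the paper's proof: Cauchy--Schwarz over $\grX_0$ to separate the $R_1$ and $R_2$ factors, summing out the complementary variable to reach $\sum_u\rho_1(u)|R_1(u;\grm)|^2$ restricted to $\rho_1(u)\le T$ (and symmetrically), pulling out $T=B^{l-11/4}$, then Bessel plus the second estimate of Lemma~\ref{lemma2.4} for $\int_\grm|G|^2$ and $\int_\grm|H|^2$. The only cosmetic blemish is that ``summing out $v$ gives'' should read ``is bounded above by,'' since the $\grX_0$ restriction drops some nonnegative terms, but the inequality runs the right way and the argument is sound.
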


\begin{proof} Recall that for systems of type A, we take $T=B^{l-11/4}$ for the truncation parameter. First applying Cauchy's inequality, and then applying (\ref{3.10}) and (\ref{3.11}), therefore, in our first step we deduce that
\begin{align*}
N_0(B;\grm,\grm)&\le \Bigl( \sum_{(u,v)\in \grX_0}\rho(u,v)|R_1(u;\grm)|^2\Bigr)^{1/2}\Bigl( \sum_{(u,v)\in \grX_0}\rho(u,v)|R_2(v;\grm)|^2\Bigr)^{1/2}\\
&\le \Bigl( \sum_{\substack{u\in \grX\\ \rho_1(u)\le B^{l-11/4}}}\!\!\!\!\!\!\rho_1(u)|R_1(u;\grm)|^2\Bigr)^{1/2}\Bigl( \sum_{\substack{v\in \grX\\ \rho_2(v)\le B^{l-11/4}}}\!\!\!\!\!\!\rho_2(v)|R_2(v;\grm)|^2\Bigr)^{1/2}\\
&\le B^{l-11/4}\Bigl( \sum_{u\in \grX}|R_1(u;\grm)|^2\Bigr)^{1/2}\Bigl( \sum_{v\in \grX}|R_2(v;\grm)|^2\Bigr)^{1/2}.
\end{align*}
Next, applying Bessel's inequality together with Lemma 2.4, we conclude that
\begin{align*}
N_0(B;\grm,\grm)&\le B^{l-11/4}\Bigl( \int_\grm |G(\alp)|^2\d\alp \Bigr)^{1/2}\Bigl( \int_\grm |H(\bet)|^2\d\bet \Bigr)^{1/2}\\
&\ll B^{l-11/4}(B^{2m-13/4-8\tau})^{1/2}(B^{2n-13/4-8\tau})^{1/2}\ll B^{s-6-8\tau}.
\end{align*}
This completes the proof of the lemma.
\end{proof}

We now come to the crescendo of our argument for systems of type A. Combining the upper bounds provided by Lemmata \ref{lemma3.3}, \ref{lemma3.4} and \ref{lemma3.5}, we deduce from (\ref{3.16}) that
\begin{align*}
N(B)&\ge N_0(B;\grM,\grM)+N_0(B;\grM,\grm)+N_0(B;\grm,\grM)+N_0(B;\grm,\grm)\\
&=N(B;\grM\times \grM)+O(B^{s-6-\tau}).
\end{align*}
Hence, in view of (\ref{2.6}), we conclude from Lemmata \ref{lemma2.1} and \ref{lemma3.1} that
$$N(B)\ge N(B;\grN)+O(B^{s-6}\calL^{-1})\gg B^{s-6}.$$
This completes the proof of Theorem \ref{theorem1.1} for systems of type A.

\section{Systems of type B} The proof of Theorem \ref{theorem1.1} in situations wherein $n=3$ is complicated by the relative inferiority of the minor arc bounds available for $H(\bet)$ in mean square. Our argument for systems of type B, in which $l\ge 4$, $m\ge 4$ and $n\ge 3$, though modelled on that of the previous section, must therefore be modified in order to exploit better the exceptional nature of elements in the sets $\grX_1$ and $\grX_2$. Since Lemma \ref{lemma2.1} remains valid, and shows that $N(B;\grN)\gg B^{s-6}$, our first goal is to show that the conclusion of Lemma \ref{lemma3.1} remains valid in the present circumstances.\par

\begin{lemma}\label{lemma4.1} For systems of type B, one has $N(B;\grK)\ll B^{s-6}\calL^{-1}$.
\end{lemma}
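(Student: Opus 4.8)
The plan is to mirror the proof of Lemma \ref{lemma3.1}, again decomposing $\grK$ into the two subsets
$$\grK_0=\{(\alp,\bet)\in \grK:a_l\alp+b_l\bet\in \grM\}\quad\text{and}\quad \grK_1=\{(\alp,\bet)\in \grK:a_l\alp+b_l\bet\in \grm\},$$
and estimating each separately. For $\grK_1$ the argument of the previous section transfers without essential change: the modified Weyl inequality (\ref{2.11}) gives $\sup_{(\alp,\bet)\in\grK_1}|F(\alp,\bet)|\ll B^{l-1/4+\eps}$, and Lemma \ref{lemma2.3} supplies $\int_\grM|G(\alp)|\d\alp\ll B^{m-3+\eps}$. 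The new feature is that for type B systems we only have $n=3$ or $n=4$, so the bound $\int_\grM|H(\bet)|\d\bet\ll B^{n-3+\eps}$ from Lemma \ref{lemma2.3} is available only when $n\ge 4$; when $n=3$ we instead estimate $H(\bet)=h(d_1\bet)h(d_2\bet)h(d_3\bet)$ directly on $\grM$, say by H\"older with Lemma \ref{lemma2.2} (or by a trivial bound together with one application of (\ref{2.9})), to recover $\int_\grM|H(\bet)|\d\bet\ll B^{n-3+\eps}$ all the same. Combining these gives $N(B;\grK_1)\ll B^{l-1/4+\eps}B^{m-3+\eps}B^{n-3+\eps}\ll B^{s-49/8}$, which is comfortably of the desired strength.

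For $\grK_0$ the strategy is the same H\"older split as in Lemma \ref{lemma3.1}, isolating the factor $\Psi(\alp,\bet)=\prod_{i=1}^2|h(c_i\alp)h(d_i\bet)h(a_i\alp+b_i\bet)^3|$ on which (\ref{3.7}) furnishes the saving $\sup_{(\alp,\bet)\in\grn}\Psi(\alp,\bet)\ll B^{10}Q^{-1/10}$, and distributing the remaining exponential sums across integrals of the types $I_G$, $I_H$, $J_0$ and $J_{i,l}^E$ introduced in (\ref{3.2})--(\ref{3.6}). Here $l\ge 4$ rather than $l=3$, so $F$ carries an extra $g$-factor; this is harmless since we can either absorb it via a trivial bound $g(0)\ll B$ (the exponent bookkeeping then reads $B^{s-11}(B^{10}Q^{-1/10})^{1/5}(B^{12})^{1/10}(B^3)^{3/5}=B^{s-6}Q^{-1/50}$, exactly as before, now that $s=l+m+n$ with the larger $l$), or — and this is why the integrals $J_{i,l-1}^E$ were defined for $k=l-1$ as well as $k=l$ in (\ref{3.5}) — by feeding the extra variable into one of the $J_{i,l-1}^E$-type integrals to obtain a parallel estimate. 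Either route yields $N(B;\grK_0)\ll B^{s-6}Q^{-1/50}$.

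Adding the two contributions gives $N(B;\grK)=N(B;\grK_0)+N(B;\grK_1)\ll B^{s-6}Q^{-1/50}\ll B^{s-6}\calL^{-1}$, which is the assertion of the lemma. The one genuine obstacle, as flagged in the opening paragraph of \S4, is the case $n=3$: the mean-square minor-arc estimate for $H$ in Lemma \ref{lemma2.4} requires $n\ge 4$, so $H$ cannot be handled there exactly as $G$ was in \S3. For the present lemma this only affects the $\grK_1$ bound, and it is circumvented cheaply by estimating $\int_\grM|H(\bet)|\d\bet$ for $n=3$ using Lemma \ref{lemma2.2} as indicated above; the more delicate consequences of the weakness at $n=3$ are deferred to the subsequent lemmas of this section, where the sets $\grX_1$, $\grX_2$ must be exploited more carefully.
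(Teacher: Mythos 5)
Your treatment of $\grK_1$ contains a genuine error, and it is precisely the obstacle that forces the paper to change strategy for type B. When $n=3$ one has $H(\bet)=h(d_1\bet)h(d_2\bet)g(d_3\bet)$, a product of only three exponential sums, and the bound $\int_\grM|H(\bet)|\d\bet\ll B^{n-3+\eps}=B^\eps$ is simply false: the mechanism behind Lemma \ref{lemma2.3} requires pairing each $h$-factor with a $g$-factor via the estimate $\int_\grM|gh|^2\ll B^{1+\eps}$ of Lemma \ref{lemma2.2}, and with $n=3$ there is only one $g(d_3\bet)$ available to pair against two $h$'s. The best one can extract for $\int_\grM|H(\bet)|\d\bet$ by Schwarz or H\"older in this configuration is roughly $B^{3/4+\eps}$ (or worse), not $B^\eps$; indeed the paper's own statement of Lemma \ref{lemma2.3} explicitly restricts the $H$-estimate to types A and C for exactly this reason. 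With this corrected bound, your $\grK_1$ estimate becomes $N(B;\grK_1)\ll B^{l-1/4+\eps}\cdot B^{m-3+\eps}\cdot B^{3/4+\eps}\ll B^{s-11/2+\eps}$, which exceeds $B^{s-6}$ and destroys the argument. The paper circumvents this by never estimating $\int_\grM|H(\bet)|\d\bet$ in isolation: it first derives $\int_0^1|F_j(\alp,\bet)H(\bet)|\d\bet\ll B^{l+n-15/4-9\tau}$ (integrating $F$ and $H$ jointly over $\bet$, so that $H$ borrows variables from $F$ via Lemma \ref{lemma2.4}), and prunes to the set $\grK_0$ on which both $a_{l-1}\alp+b_{l-1}\bet$ and $a_l\alp+b_l\bet$ lie on $\grM$ — a genuinely different decomposition from the one you propose, made possible by $l\ge4$.

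Your treatment of $\grK_0$ also misses a necessary modification: the quantity $\Phi_H(\bet)$ defined in (\ref{3.2}) involves $g(d_3\bet)g(d_4\bet)$ and is meaningless when $n=3$. The paper replaces it with $\Phi_H(\bet)=|g(d_n\bet)|^{5/2}|h(d_1\bet)h(d_2\bet)|$, and correspondingly redistributes the powers in $\Psi$ to $\prod_{i=1}^2|h(c_i\alp)h(d_i\bet)^3h(a_i\alp+b_i\bet)|$ (in place of $h(a_i\alp+b_i\bet)^3$) and uses a modified H\"older split involving both $J_{i,l}$ and $J_{i,l-1}$ (this, not a trivial absorption of the extra $g$-factor, is why the integrals $J_{i,l-1}^E$ were set up in \S3). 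Your closing remark that the $n=3$ weakness ``only affects the $\grK_1$ bound'' is therefore not correct: it affects both pieces.
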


\begin{proof} We begin by deriving an auxiliary mean value estimate. When $j\in \{l-1,l\}$, define
$$F_j(\alp,\bet)=h(a_1\alp+b_1\bet)h(a_2\alp+b_2\bet)\prod_{\substack{3\le i\le l\\ i\ne j}}g(a_i\alp+b_i\bet).$$
Then as a consequence of Schwarz's inequality, one has
$$\int_0^1|F_j(\alp,\bet)H(\bet)|\d\bet \le \Bigl( \int_0^1|F_j(\alp,\bet)|^2\d\bet\Bigr)^{1/2}\Bigl( \int_0^1|H(\bet)|^2\d\bet\Bigr)^{1/2}.$$
By orthogonality, the first integral on the right hand side here is bounded above by the number of solutions of a diophantine equation, and so by applying Lemma \ref{2.4} we obtain
$$\int_0^1|F_j(\alp,\bet)|^2\d\bet \le \int_0^1|F_j(0,\bet)|^2\d\bet \ll B^{2(l-1)-11/4-9\tau}.$$
The second integral on the right hand side may also be estimated via Lemma \ref{lemma2.4}, so that
$$\int_0^1|H(\bet)|^2\d\bet \ll B^{2n-11/4-9\tau}.$$
We therefore deduce that
$$\int_0^1|F_j(\alp,\bet)H(\bet)|\d\bet \ll B^{l+n-15/4-9\tau}.$$

\par Our next step is to prune the set $\grK$, the better to exploit available major arc estimates. Define
\begin{align*}
\grK_0&=\{(\alp,\bet)\in\grK:\text{$a_{l-1}\alp+b_{l-1}\bet\in \grM$ and $a_l\alp+b_l\bet\in \grM$}\},\\ 
\grk_i(\alp)&=\{ \bet\in [0,1):a_i\alp+b_i\bet\in \grm\}\quad (i=l-1,l).
\end{align*}
Then from the modified version of Weyl's inequality (\ref{2.11}), when $i\in \{l-1,l\}$ one sees that
$$\sup_{\bet\in \grk_i(\alp)}|g(a_i\alp+b_i\bet)|\le \sup_{\tet\in \grm}|g(\tet)|\ll B^{3/4+\eps}.$$
Uniformly in $\alp$, therefore, one has the estimate
\begin{align*}
\int_{\grk_i(\alp)}|F(\alp,\bet)H(\bet)|\d\bet&\le \Bigl(\sup_{\bet\in\grk_i(\alp)}|g(a_i\alp+b_i\bet)|\Bigr)\int_0^1|F_i(\alp,\bet)H(\bet)|\d\bet\\
&\ll B^{3/4+\eps}(B^{l+n-15/4-9\tau})\ll B^{s-m-3-8\tau}.
\end{align*}
Consequently, on recalling Lemma \ref{lemma2.3}, one discerns the upper bound
\begin{align*}
N(B;\grK\setminus \grK_0)&\le \sum_{i=l-1}^l\int_\grM |G(\alp)|\int_{\grk_i(\alp)}|F(\alp,\bet)H(\bet)|\d\bet\d\alp \\
&\ll B^{s-m-3-8\tau}\int_\grM |G(\alp)|\d\alp \ll B^{s-6-\tau}.
\end{align*}
In this way, we deliver the interim conclusion
\begin{equation}\label{4.1}
N(B;\grK)=N(B;\grK_0)+O(B^{s-6-\tau}).
\end{equation}

\par We now imitate the argument of the proof of Lemma \ref{lemma3.1}, employing notation from the latter proof with some minor modifications. We define $\Phi_G(\alp)$ as in (\ref{3.2}), and modify the definition of $\Phi_H(\bet)$ by putting
$$\Phi_H(\bet)=|g(d_n\bet)|^{5/2}|h(d_1\bet)h(d_2\bet)|.$$
One finds with little effort that the estimates $I_G\ll B^{3/2}$ and $I_H\ll B^{3/2}$ remain valid in present circumstances. Defining $J_{i,k}^G$ and $J_{i,k}^H$ as in (\ref{3.5}), though noting our revised definition of the set $\grK_0$, one finds just as in the argument leading to (\ref{3.6}) that when $i\in\{1,2\}$ and $k\in \{l-1,l\}$, one has $J_{i,k}^G\ll B^3$ and $J_{i,k}^H\ll B^3$. In the current situation, we modify the definition of $\Psi(\alp,\bet)$ by putting
$$\Psi(\alp,\bet)=\prod_{i=1}^2|h(c_i\alp)h(d_i\bet)^3h(a_i\alp+b_i\bet)|.$$
The reader will have no difficulty in confirming that the upper bound (\ref{3.7}) remains valid. Consequently, an application of H\"older's inequality reveals that
$$N(B;\grK_0)\le g(0)^{s-11}\Bigl(\sup_{(\alp,\bet)\in\grn}\Psi(\alp,\bet)\Bigr)^{1/5} (J_{1,l}^GJ_{2,l-1}^G)^{3/10}(J_{1,l}^HJ_{2,l-1}^H)^{1/10}J_0^{1/5},$$
in which $J_0$ is defined by (\ref{3.4}). The upper bound $J_0\ll B^3$, combined with our earlier estimates, therefore leads to the asymptotic relation
$$N(B;\grK_0)\ll B^{s-11}(B^{10}Q^{-1/10})^{1/5}(B^6)^{3/10}(B^6)^{1/10}(B^3)^{1/5}=B^{s-6}Q^{-1/50}.$$
The conclusion of the lemma is now confirmed by recalling (\ref{4.1}).
\end{proof}

At this stage of our discussion we introduce unconventional elements paralleling those introduced in the preambles to Lemmata \ref{lemma3.2} and \ref{lemma3.3}, employing the same notation throughout. We have only to record that for systems of type B, the truncation parameter is fixed to be $T=B^{l-3+\tau}$. Before launching the Hardy-Littlewood dissection proper, we pause to establish an auxiliary estimate for the quantity
$$\Zet_i=\sum_{v\in \grX}\Biggl( \sum_{\substack{u\in \grX\\ (u,v)\in \grX_i}}\rho(u,v)\Biggr)^2\quad (i=1,2).$$

\begin{lemma}\label{lemma4.2} For systems of type B, one has $\Zet_i\ll B^{2l-13/4-8\tau}$ $(i=1,2)$.
\end{lemma}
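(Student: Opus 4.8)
The plan is to convert the arithmetic largeness conditions defining $\grX_1$ and $\grX_2$ into minor-arc conditions, and then feed these into the second mean value estimate of Lemma~\ref{lemma2.4}. Write $T=B^{l-3+\tau}$ for the truncation parameter of type~B, and recall that by orthogonality
$$\rho_1(u)=\int_0^1F(\alp,0)e(-\alp u)\d\alp,\qquad \rho_2(v)=\int_0^1F(0,\bet)e(-\bet v)\d\bet,$$
since $F(\alp,0)=h(a_1\alp)h(a_2\alp)\prod_{i=3}^lg(a_i\alp)$ and $F(0,\bet)=h(b_1\bet)h(b_2\bet)\prod_{i=3}^lg(b_i\bet)$. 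Split each of these integrals according to whether the variable of integration lies in $\grM$ or $\grm$, writing $\rho_1(u)=\rho_1^\grM(u)+\rho_1^\grm(u)$ and $\rho_2(v)=\rho_2^\grM(v)+\rho_2^\grm(v)$ accordingly.

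Two inputs are needed. First, since $l\ge4$ for systems of type~B, the proof of Lemma~\ref{lemma2.3} (a trivial estimate for $l-4$ of the Weyl sums, followed by Schwarz's inequality and the bound (\ref{2.9})) applies verbatim to $F(\alp,0)$ and to $F(0,\bet)$ in place of $G$, giving $\int_\grM|F(\alp,0)|\d\alp\ll B^{l-3+\eps}$ and $\int_\grM|F(0,\bet)|\d\bet\ll B^{l-3+\eps}$; hence $|\rho_1^\grM(u)|\ll B^{l-3+\eps}$ and $|\rho_2^\grM(v)|\ll B^{l-3+\eps}$ uniformly. Second, both $F(\alp,0)$ and $F(0,\bet)$ are of the shape $E_l$ with $l\ge4$, so the second estimate of Lemma~\ref{lemma2.4} together with Bessel's inequality yields
$$\sum_u|\rho_1^\grm(u)|^2\le\int_\grm|F(\alp,0)|^2\d\alp\ll B^{2l-13/4-8\tau},$$
and likewise $\sum_v|\rho_2^\grm(v)|^2\ll B^{2l-13/4-8\tau}$. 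Fixing $\eps$ small in terms of $\tau$, it follows that whenever $\rho_1(u)>T$ one has both $|\rho_1^\grm(u)|\ge\rho_1(u)-|\rho_1^\grM(u)|\gg T$ and $\rho_1(u)\ll|\rho_1^\grm(u)|$, and symmetrically for $\rho_2$.

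Consider first $\Zet_2$. Writing $\calV=\{v\in\grX:\rho_2(v)>T\}$, we have $\grX_2=\grX\times\calV$, and therefore
$$\Zet_2=\sum_{v\in\calV}\rho_2(v)^2\ll\sum_{v\in\calV}|\rho_2^\grm(v)|^2\le\sum_v|\rho_2^\grm(v)|^2\ll B^{2l-13/4-8\tau}.$$
For $\Zet_1$, put $\calU=\{u\in\grX:\rho_1(u)>T\}$. The inner sum $\sum_{u:(u,v)\in\grX_1}\rho(u,v)$ vanishes unless $\rho_2(v)\le T$, in which case it equals $\sum_{u\in\calU}\rho(u,v)$, a quantity bounded by $\sum_u\rho(u,v)=\rho_2(v)\le T$; squaring, summing over $v$, and using $\sum_v\sum_{u\in\calU}\rho(u,v)=\sum_{u\in\calU}\rho_1(u)$, we obtain
$$\Zet_1\le T\sum_{u\in\calU}\rho_1(u).$$
Since $|\rho_1^\grm(u)|\gg T$ on $\calU$, the bound $\sum_u|\rho_1^\grm(u)|^2\ll B^{2l-13/4-8\tau}$ gives $\#\calU\ll B^{2l-13/4-8\tau}T^{-2}=B^{11/4-10\tau}$; then, using $\rho_1(u)\ll|\rho_1^\grm(u)|$ on $\calU$ and Cauchy's inequality,
$$\sum_{u\in\calU}\rho_1(u)\ll(\#\calU)^{1/2}\Bigl(\sum_u|\rho_1^\grm(u)|^2\Bigr)^{1/2}\ll B^{11/8-5\tau}B^{l-13/8-4\tau}=B^{l-1/4-9\tau}.$$
Combining the last two displays yields $\Zet_1\ll T\cdot B^{l-1/4-9\tau}=B^{2l-13/4-8\tau}$, as claimed.

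The substantive point, rather than bookkeeping, is the transfer from the arithmetic statement ``$\rho_1(u)$ is large'' to the analytic statement ``$\rho_1^\grm(u)$ is large'': this works because the major-arc contribution $\rho_1^\grM(u)$ has the expected size $B^{l-3+\eps}$, which is a power of $B$ below the truncation level $T=B^{l-3+\tau}$, so the entire excess $\rho_1(u)-O(B^{l-3+\eps})$ is supplied by the minor arcs, where the subconvex estimate of Lemma~\ref{lemma2.4} is available. The only delicate point is that the implied constant in the uniform major-arc bound must be dominated by $T$, which forces the choice of $\eps$ small relative to $\tau$; beyond this, the argument uses only Bessel's inequality and the Cauchy--Schwarz inequality.
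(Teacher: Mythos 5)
Your proof is correct and follows essentially the same strategy as the paper: transfer the largeness of $\rho_1(u)$ from the arithmetic side to the minor-arc side using the uniform major-arc bound $\rho_1^\grM(u)\ll B^{l-3+\eps}=o(T)$, apply Bessel's inequality with the second estimate of Lemma~\ref{lemma2.4}, and for $\Zet_1$ bound the inner sum by $\rho_2(v)\le T$. The only cosmetic difference is that for $\Zet_1$ you estimate $\sum_{u\in\calU}\rho_1(u)$ via $\#\calU$ and Cauchy--Schwarz, whereas the paper uses the more direct observation that $\rho_1(u)\le T^{-1}\rho_1(u)^2\ll T^{-1}|\rho_1^\grm(u)|^2$ on $\calU$; the two are equivalent and yield the identical bound.
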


\begin{proof} We first seek to establish the lemma in the case $i=1$. Suppose that $u\in \grX$ is an integer for which $\rho_1(u)>B^{l-3+\tau}$. Then one has
$$\int_0^1F(\alp,0)e(-u\alp)\d\alp =\rho_1(u)>B^{l-3+\tau}.$$
For systems of type B one has $l\ge 4$. As in the argument of the proof of Lemma \ref{lemma2.3}, one therefore finds from Lemma \ref{lemma2.2} that
$$\Bigl|\int_\grM F(\alp,0)e(-u\alp)\d\alp\Bigr|\le \int_\grM |F(\alp,0)|\d\alp \ll B^{l-3+\eps},$$
whence
$$\Bigl|\int_\grm F(\alp,0)e(-u\alp)\d\alp \Bigr|\ge \tfrac{1}{2}\rho_1(u).$$
In this way, one obtains the upper bound
\begin{align*}
\sum_{(u,v)\in \grX_1}\rho(u,v)&\le \sum_{\substack{u\in \grX\\ \rho_1(u)>B^{l-3+\tau}}}\sum_{v\in \grX}\rho(u,v)\le B^{3-l-\tau}\sum_{\substack{u\in\grX\\ \rho_1(u)>B^{l-3+\tau}}}\rho_1(u)^2\\
&\ll B^{3-l-\tau}\sum_{u\in\grX}\Bigl| \int_\grm F(\alp,0)e(-u\alp)\d\alp \Bigr|^2.
\end{align*}
From here, an application of Bessel's inequality in combination with the second bound of Lemma \ref{lemma2.4} yields
$$\sum_{(u,v)\in \grX_1}\rho(u,v)\ll B^{3-l-\tau}\int_\grm |F(\alp,0)|^2\d\alp \ll B^{3-l-\tau}(B^{2l-13/4-8\tau}).$$
However, when $(u,v)\in \grX_1$ one has
$$\sum_{\substack{u\in \grX\\ (u,v)\in \grX_1}}\rho(u,v)\le \rho_2(v)\le B^{l-3+\tau},$$
and so one arrives at the upper bound
$$\sum_{v\in \grX}\Bigl( \sum_{\substack{u\in \grX\\ (u,v)\in \grX_1}}\rho(u,v)\Bigr)^2\le B^{l-3+\tau}\sum_{(u,v)\in \grX_1}\rho(u,v)\ll B^{l-3+\tau}(B^{l-1/4-9\tau}).$$
The conclusion of the lemma has therefore been established when $i=1$.\par

When $i=2$, we follow a similar though simpler path. Thus, one obtains
\begin{align*}
\sum_{v\in \grX}\Bigl( \sum_{\substack{u\in \grX\\ (u,v)\in \grX_2}}\rho(u,v)\Bigr)^2&\le \sum_{\substack{v\in \grX\\ \rho_2(v)>B^{l-3+\tau}}}\rho_2(v)^2\ll \sum_{v\in \grX}\Bigl| \int_\grm F(0,\bet)e(-\bet v)\d\bet \Bigr|^2\\
&\le \int_\grm |F(0,\bet)|^2\d\bet \ll B^{2l-13/4-8\tau}.
\end{align*}
This completes the proof of the lemma in the case $i=2$.
\end{proof}

In the present circumstances, our Hardy-Littlewood dissection proceeds by disassembling the set $[0,1)\times [0,1)$ into the three pieces
$$\grM\times \grM, \quad \grm\times [0,1)\quad \text{and}\quad \grM\times \grm.$$
We analyse these subsets in turn by means of three lemmata.

\begin{lemma}\label{lemma4.3} For systems of type B, one has
$$N_0(B;\grM,\grM)-N(B;\grM\times \grM)\ll B^{s-6-\tau}.$$
\end{lemma}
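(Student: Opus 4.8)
The plan is to mirror the proof of Lemma~\ref{lemma3.3} almost verbatim, the only difference being the choice of truncation parameter and the source of the key auxiliary bound. First I would record the trivial major arc bounds for the truncated generating functions: by the triangle inequality and Lemma~\ref{lemma2.3} (whose first assertion is valid for systems of type B) one has $R_1(u;\grM)\ll B^{m-3+\eps}$, and by a trivial estimate for $g$ together with Lemma~\ref{lemma2.2} one similarly obtains $R_2(v;\grM)\ll B^{n-3+\eps}$ (here one cannot invoke the second half of Lemma~\ref{lemma2.3}, but the trivial bound from $H(\bet)=h(d_1\bet)h(d_2\bet)\prod_{k=3}^n g(d_k\bet)$ with one pair handled by Lemma~\ref{lemma2.2} and the rest taken trivially gives the same power of $B$).

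Next, recalling (\ref{3.12}), (\ref{3.14}) and (\ref{3.15}), I would write
$$N(B;\grM\times \grM)-N_0(B;\grM,\grM)=\sum_{(u,v)\in \grX_1\cup \grX_2}\rho(u,v)R_1(u;\grM)R_2(v;\grM).$$
Inserting the bounds just obtained for $R_1$ and $R_2$, this is
$$\ll B^{m+n-6+\eps}\sum_{(u,v)\in \grX_1\cup \grX_2}\rho(u,v).$$
The remaining task is to show $\sum_{(u,v)\in \grX_1\cup\grX_2}\rho(u,v)\ll B^{l-6-\tau}$ or better, which combined with $m+n=s-l$ yields the claim. This is where I expect the one genuine point requiring care: for systems of type B the truncation parameter is $T=B^{l-3+\tau}$ rather than the $B^{l-11/4}$ used in type A, so the crude bound of Lemma~\ref{lemma3.2} is not directly available and one must instead extract the decay from the argument inside the proof of Lemma~\ref{lemma4.2}. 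Indeed, in that proof one already shows (for $i=1,2$) that $\sum_{(u,v)\in\grX_i}\rho(u,v)\ll B^{3-l-\tau}(B^{2l-13/4-8\tau})=B^{l-1/4-9\tau}$ in the case $i=1$, and an even smaller bound in the case $i=2$ via Bessel's inequality applied to $\int_\grm|F(0,\bet)|^2\,\d\bet$ together with Lemma~\ref{lemma2.4}. Hence $\sum_{(u,v)\in\grX_1\cup\grX_2}\rho(u,v)\ll B^{l-1/4-9\tau}$, so that the displayed difference is $\ll B^{m+n-6+\eps}\cdot B^{l-1/4-9\tau}=B^{s-6-1/4-9\tau+\eps}\ll B^{s-6-\tau}$, as required.

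The main obstacle, then, is simply keeping the bookkeeping straight: one must be sure to quote the intermediate estimate from the proof of Lemma~\ref{lemma4.2} (namely the bound on $\sum_{(u,v)\in\grX_i}\rho(u,v)$) rather than the final statement of that lemma, which concerns the weighted square $\Zet_i$ and is a slightly different quantity. Apart from that, the proof is routine: no new mean value estimates are needed, and the conclusion follows by collecting the powers of $B$. It may be cleanest to phrase the argument so that Lemma~\ref{lemma4.2} is invoked directly, absorbing the innermost sum $\sum_{u:\,(u,v)\in\grX_i}\rho(u,v)\le\rho_2(v)\le T=B^{l-3+\tau}$ against the full sum over $v$, which again reproduces the estimate $\sum_{(u,v)\in\grX_i}\rho(u,v)\le T^{-1}\Zet_i\cdot\text{(something)}$; either route gives a saving of a positive power of $B$ over $B^{s-6}$, which suffices.
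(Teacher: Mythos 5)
Your argument runs into trouble precisely at the point where you assert $R_2(v;\grM)\ll B^{n-3+\eps}$. Systems of type B include the triples $(4,4,3)$, $(4,5,3)$ and $(5,4,3)$, for which $n=3$. Then $H(\bet)=h(d_1\bet)h(d_2\bet)g(d_3\bet)$ contains only a single factor of $g$, so the Schwarz argument underlying Lemma~\ref{lemma2.3} (which pairs each $h$ with a distinct $g$) cannot be run, and the paper accordingly asserts the $H$-estimate of Lemma~\ref{lemma2.3} only for types A and C. The repair you sketch --- one $(g,h)$ pair via Lemma~\ref{lemma2.2}, the remaining $h$ trivially --- gives at best
$$\int_\grM|H(\bet)|\d\bet\ll\Bigl(\sup_\bet|h(d_2\bet)|\Bigr)\Bigl(\int_\grM|g(d_3\bet)h(d_1\bet)|^2\d\bet\Bigr)^{1/2}\bigl(\text{mes}\,\grM\bigr)^{1/2}\ll B\cdot B^{1/2+\eps}\cdot B^{-3/4},$$
that is $B^{3/4+\eps}$, a loss of $B^{3/4}$ over the claimed $B^{n-3+\eps}=B^\eps$. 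Feeding this into your final display yields, when $n=3$,
$$\ll B^{m-3+\eps}\cdot B^{3/4+\eps}\cdot B^{l-1/4-9\tau}=B^{s-6+1/2-9\tau+\eps},$$
which exceeds the target $B^{s-6-\tau}$ by essentially $B^{1/2}$. The gap is therefore genuine, not merely a bookkeeping slip.

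The paper's proof sidesteps any pointwise bound on $R_2(v;\grM)$. It introduces $\Ups_i=\sum_{(u,v)\in\grX_i}\rho(u,v)|R_2(v;\grM)|$, applies Cauchy's inequality in the $v$-variable to get $\Ups_i\le\Zet_i^{1/2}\bigl(\sum_v|R_2(v;\grM)|^2\bigr)^{1/2}$, and controls the mean square of $R_2$ by Bessel's inequality together with the first estimate of Lemma~\ref{lemma2.4}, namely $\sum_v|R_2(v;\grM)|^2\le\int_0^1|H(\bet)|^2\d\bet\ll B^{2n-11/4-9\tau}$. This gives $\Ups_i\ll B^{l+n-3-8\tau}$, and multiplying by the pointwise bound $R_1(u;\grM)\ll B^{m-3+\eps}$ (valid since $m\ge 4$) yields the lemma. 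Note that it is the weighted quantity $\Zet_i$ from the statement of Lemma~\ref{lemma4.2} --- not the intermediate unweighted sum $\sum_{(u,v)\in\grX_i}\rho(u,v)$ you say you would quote instead --- that is required to close the Cauchy step, so the parenthetical caution at the end of your proposal in fact points in the wrong direction.
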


\begin{proof} We begin by deriving an auxiliary estimate for the quantity
$$\Ups_i=\sum_{(u,v)\in \grX_i}\rho(u,v)|R_2(v;\grM)|\quad (i=1,2).$$
Observe that by applying Bessel's inequality in combination with the first estimate of Lemma \ref{lemma2.4}, one discerns that
$$\sum_{v\in \grX}|R_2(v;\grM)|^2\le \int_0^1|H(\bet)|^2\d\bet \ll B^{2n-11/4-9\tau}.$$
When $i\in \{1,2\}$, therefore, we deduce from Cauchy's inequality together with Lemma \ref{lemma4.2} that
$$\Ups_i\le \Zet_i^{1/2}\Bigl( \sum_{v\in \grX}|R_2(v;\grM)|^2\Bigr)^{1/2}\ll B^{l+n-3-8\tau}.$$
For systems of type B one has $m\ge 4$, and so it follows from Lemma \ref{lemma2.3} that $R_1(u;\grM)\ll B^{m-3+\eps}$. Hence, we obtain
\begin{align*}
N(B;\grM\times \grM)-N_0(B;\grM,\grM)&=\sum_{(u,v)\in \grX_1\cup \grX_2}\rho(u,v)R_1(u;\grM)R_2(v;\grM)\\
&\ll B^{m-3+\eps}(\Ups_1+\Ups_2)\ll B^{s-6-7\tau }.
\end{align*}
This completes the proof of the lemma.
\end{proof}

\begin{lemma}\label{lemma4.4} For systems of type B, one has $N_0(B;\grm,[0,1))\ll B^{s-6-\tau}$.
\end{lemma}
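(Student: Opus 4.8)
The plan is to bound $N_0(B;\grm,[0,1))$ directly, using the truncation $\rho_1(u)\le T=B^{l-3+\tau}$ that defines $\grX_0$ together with the minor arc input for $G(\alp)$ and the full (major plus minor) input for $H(\bet)$. First I would open up the definition, writing
$$N_0(B;\grm,[0,1))=\sum_{(u,v)\in \grX_0}\rho(u,v)R_1(u;\grm)R_2(v;[0,1)),$$
and then, recalling (\ref{3.10}), pass to the bound
$$N_0(B;\grm,[0,1))\le \sum_{\substack{u\in\grX\\ \rho_1(u)\le T}}\sum_{v\in\grX}\rho(u,v)|R_1(u;\grm)||R_2(v;[0,1))|.$$
The key manoeuvre is Cauchy's inequality in the pair $(u,v)$, splitting $\rho(u,v)=\rho(u,v)^{1/2}\cdot\rho(u,v)^{1/2}$, to obtain a product of two sums: one weighted by $|R_1(u;\grm)|^2$ and one weighted by $|R_2(v;[0,1))|^2$. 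In the first sum the inner $v$-summation of $\rho(u,v)$ collapses to $\rho_1(u)\le T$ by (\ref{3.10}) and the truncation, giving $T\sum_{u\in\grX}|R_1(u;\grm)|^2\le T\int_\grm|G(\alp)|^2\d\alp$. Here the second estimate of Lemma \ref{lemma2.4} supplies $\int_\grm|G(\alp)|^2\d\alp\ll B^{2m-13/4-8\tau}$, since for systems of type B one has $m\ge 4$.

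For the second sum, the inner $u$-summation of $\rho(u,v)$ collapses to $\rho_2(v)$, so I must control $\sum_{v\in\grX}\rho_2(v)|R_2(v;[0,1))|^2$. This is where a little care is needed: unlike $G$, the function $H$ has only $n\ge 3$ factors, so the strong minor arc mean square of Lemma \ref{lemma2.4} is unavailable for $|H|^2$ alone over all of $[0,1)$, and in any case $R_2(v;[0,1))$ uses the full circle. The natural route is to split $[0,1)=\grM\cup\grm$ and handle $R_2(v;\grM)$ and $R_2(v;\grm)$ separately, but it is cleaner to weight with $\rho_2(v)$ first: apply Cauchy once more to get $\bigl(\sum_v\rho_2(v)^2\bigr)^{1/2}\bigl(\sum_v|R_2(v;[0,1))|^4\bigr)^{1/2}$, or, more in the spirit of the type~A argument, simply observe that $\sum_{v\in\grX}\rho_2(v)|R_2(v;[0,1))|^2$ counts, with multiplicity, solutions of a Diophantine system and so equals a mean value of the shape $\int_0^1\int_0^1 F(0,\bet_1)\overline{F(0,\bet_2)}H(\bet_1)H(\bet_2)\cdots$; that seems to over-complicate matters. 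The simplest correct path: bound $|R_2(v;[0,1))|\le |R_2(v;\grM)|+|R_2(v;\grm)|$, so that it suffices to treat $\sum_v\rho_2(v)|R_2(v;\grM)|^2$ and $\sum_v\rho_2(v)|R_2(v;\grm)|^2$. In the latter, trivially $\rho_2(v)\le\rho_2(v)$ and one invokes the idea of Lemma \ref{lemma2.4} via a Weyl-type supremum bound to extract a power saving; in the former, $\rho_2(v)$ is harmless because $|R_2(v;\grM)|\ll B^{n-3+\eps}$ pointwise by Lemma \ref{lemma2.3} (for systems of type B only the $G$-estimate of Lemma \ref{lemma2.3} is stated, but $\int_\grM|H(\bet)|\d\bet\ll B^{n-3+\eps}$ follows identically when $n\ge 4$, and when $n=3$ one uses $\int_\grM|h(d_1\bet)h(d_2\bet)h(d_3\bet)|\d\bet$ bounded by H\"older and Lemma \ref{lemma2.2} to the same effect), reducing matters to $\sum_v\rho_2(v)^2=\int_0^1|F(0,\bet)|^2\d\bet\ll B^{2l-11/4-9\tau}$ from Lemma \ref{lemma2.4}.

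Assembling the pieces: the first factor contributes $\bigl(T\cdot B^{2m-13/4-8\tau}\bigr)^{1/2}=B^{(l-3+\tau)/2}B^{m-13/8-4\tau}$, and the second factor contributes at worst $B^{n-3+O(\eps)}\cdot B^{(2l-11/4-9\tau)/2}$-type savings after the two-step Cauchy, times $B^{-\tau'}$ for some $\tau'>0$ coming from the genuine minor arc portion. Multiplying out, the exponent of $B$ totals $s-6$ minus a positive multiple of $\tau$, which is exactly the claimed $N_0(B;\grm,[0,1))\ll B^{s-6-\tau}$ after absorbing $\eps$. \textbf{The main obstacle} is the bookkeeping around $R_2(v;[0,1))$: because this integral runs over the whole circle and $H$ carries only $n\ge 3$ cube factors, one cannot simply cite the minor arc mean square of Lemma \ref{lemma2.4} for $H$, and one must instead combine the pointwise major arc bound of Lemma \ref{lemma2.3} (weighted against $\sum_v\rho_2(v)^2$, where the smoothing built into $\rho$ via $\calA_\eta^*(B)$ is essential) with a separate minor arc treatment; verifying that the $\tau$-power savings survive the truncation level $T=B^{l-3+\tau}$ — rather than the smaller $T=B^{l-11/4}$ used in type~A — is the delicate accounting, and is precisely why the truncation parameter was chosen with the extra $B^\tau$ room in the type~B setting.
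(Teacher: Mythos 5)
There is a genuine gap, and the source of it is a misreading of Lemma \ref{lemma2.4} combined with an overlooked feature of $\grX_0$.

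First, the misreading: the \emph{first} estimate of Lemma \ref{lemma2.4}, namely $\int_0^1|E_t(\tet)|^2\d\tet\ll B^{2t-11/4-9\tau}$, is a full-circle bound valid for $t\ge 3$; only the \emph{second} (minor-arc) estimate requires $t\ge 4$. Since $R_2(v;[0,1))$ involves integration over the whole circle, the bound you want is precisely the first one, and it applies to $H(\bet)$ even when $n=3$. Your entire ``main obstacle'' paragraph is built on the belief that the relevant $L^2$-bound for $H$ is unavailable for $n=3$, and that belief is false. Second, the overlooked point: the definition \eqref{3.11} of $\grX_0$ imposes \emph{both} $\rho_1(u)\le T$ \emph{and} $\rho_2(v)\le T$. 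You exploit the first truncation in the $V_1$ factor, but then treat the second sum as if $\rho_2(v)$ were uncontrolled, so you ``must control $\sum_v\rho_2(v)|R_2(v;[0,1))|^2$'' by some further device. In fact the paper's argument is entirely symmetric: after Cauchy, $V_2\le\sum_{\rho_2(v)\le T}\rho_2(v)|R_2(v;[0,1))|^2\le T\sum_v|R_2(v;[0,1))|^2\le T\int_0^1|H|^2\ll B^{l-3+\tau}\,B^{2n-11/4-9\tau}$. Multiplying with your (correct) bound for $V_1$ gives $B^{s-6-7.5\tau}$ and the lemma, with no case distinction.

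These two oversights send you off on three half-started alternative routes, none carried through. The ``simplest correct path'' you settle on, splitting $R_2(v;[0,1))$ as $R_2(v;\grM)+R_2(v;\grm)$, would actually fail in the case $n=3$: for the minor-arc piece you invoke ``a Weyl-type supremum bound to extract a power saving,'' but with $n=3$ the factor $H(\bet)=h(d_1\bet)h(d_2\bet)g(d_3\bet)$ has only a single $g$-factor to hit with the Weyl bound \eqref{2.11}, and what survives is roughly $B^{3/2}\int_0^1|h|^4\approx B^{7/2}$, which is \emph{larger} than the full-circle bound $B^{13/4-9\tau}$ and gives no saving. Finally, your closing remark that $T=B^{l-3+\tau}$ (rather than the smaller $B^{l-11/4}$ used for type~A) provides ``extra room'' for this lemma inverts the logic: a smaller $T$ would only help Lemma~\ref{lemma4.4}; the larger $T$ is forced by the exceptional-set argument of Lemma~\ref{lemma4.2}, which needs $T$ to dominate the major-arc contribution $\ll B^{l-3+\eps}$ to $\rho_1(u)$, and Lemma~\ref{lemma4.4} must then be proved \emph{in spite of} the weaker truncation.
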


\begin{proof} An application of Cauchy's inequality reveals that
$$N_0(B;\grm,[0,1))\le V_1^{1/2}V_2^{1/2},$$
where
$$V_1=\sum_{(u,v)\in \grX_0}\rho(u,v)|R_1(u;\grm)|^2$$
and
$$V_2=\sum_{(u,v)\in\grX_0}\rho(u,v)|R_2(v;[0,1))|^2.$$
On recalling the definitions of the sets $\grX_i$ from (\ref{3.11}), noting that at present $T=B^{l-3+\tau}$, it follows from Bessel's inequality and Lemma \ref{lemma2.4} that
\begin{align*}V_1&\le \sum_{\substack{u\in \grX\\ \rho_1(u)\le B^{l-3+\tau}}}\rho_1(u)|R_1(u;\grm)|^2\le B^{l-3+\tau}\sum_{u\in \grX}|R_1(u;\grm)|^2\\
&\le B^{l-3+\tau}\int_\grm |G(\alp)|^2\d\alp \ll B^{l-3+\tau}(B^{2m-13/4-8\tau}).
\end{align*}
Similarly, one finds that
\begin{align*}
V_2&\le \sum_{\substack{v\in \grX\\ \rho_2(v)\le B^{l-3+\tau}}}\rho_2(v)|R_2(v;[0,1))|^2\le B^{l-3+\tau}\sum_{v\in \grX}|R_2(v;[0,1))|^2\\
&\le B^{l-3+\tau}\int_0^1|H(\bet)|^2\d\bet \ll B^{l-3+\tau}(B^{2n-11/4-9\tau}).
\end{align*}
Thus we deduce that
$$N_0(B;\grm,[0,1))\ll B^{l-3+\tau}(B^{n+m-3-8\tau})\le B^{s-6-7\tau},$$
and the proof of the lemma is complete.
\end{proof}

\begin{lemma}\label{lemma4.5} For systems of type B, one has $N_0(B;\grM,\grm)\ll B^{s-6-\tau}$.
\end{lemma}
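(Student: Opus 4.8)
The plan is to mirror the structure of Lemma \ref{lemma3.4} (the $N_0(B;\grM,\grm)$ estimate for systems of type A), but to compensate for the weaker mean-square bound for $H(\bet)$ on the minor arcs by exploiting the exceptional sets $\grX_1$ and $\grX_2$ via Lemma \ref{lemma4.2}. First I would split $N_0(B;\grM,\grm)$ according to the decomposition $\grX_0=\grX^2\setminus(\grX_1\cup\grX_2)$ of \eqref{3.12}. On $\grX_0$ we know $\rho_2(v)\le T=B^{l-3+\tau}$, so after applying the major-arc bound $R_1(u;\grM)\ll B^{m-3+\eps}$ from Lemma \ref{lemma2.3} and summing over $u$ via \eqref{3.10}, we are left to bound
$$\sum_{v\in\grX}\rho_2(v)|R_2(v;\grm)|\le B^{l-3+\tau}\sum_{v\in\grX}|R_2(v;\grm)|,$$
and Cauchy's inequality together with the trivial count $\#\grX\ll B^3$ and Bessel's inequality (using the second bound of Lemma \ref{lemma2.4} applied to $H$) gives $\sum_{v\in\grX}|R_2(v;\grm)|^2\le\int_\grm|H(\bet)|^2\d\bet\ll B^{2n-13/4-8\tau}$. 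Combining these yields a contribution $\ll B^{m-3+\eps}B^{l-3+\tau}B^{3/2}(B^{2n-13/4-8\tau})^{1/2}=B^{s-23/8+\tau+\eps-7\tau}$, which is comfortably $\ll B^{s-6-\tau}$ once one checks the exponent arithmetic: $m-3+l-3+3/2+n-13/8=s-23/8$, and $s-23/8<s-6$ is false, so this naive bound is \emph{not} sufficient and a sharper argument is needed.

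So the main obstacle, and the place where the exceptional-set machinery must enter, is precisely that the crude bound just sketched loses too much. The remedy is to not throw away the $\rho_2$-weighting entirely: instead, apply Cauchy's inequality in the form
$$\sum_{v\in\grX}\rho_2(v)|R_2(v;\grm)|\le\Bigl(\sum_{v\in\grX}\rho_2(v)^2\Bigr)^{1/2}\Bigl(\sum_{v\in\grX}|R_2(v;\grm)|^2\Bigr)^{1/2},$$
exactly as in Lemma \ref{lemma3.4}, but now the first factor is estimated \emph{on $\grX_0$} where $\rho_2(v)\le T$. This does not immediately help either; the genuine fix is to split $N_0(B;\grM,\grm)$ itself over the three regions $\grX_0\cap(\grX_1\cup\grX_2)^c$ using the same two-variable bookkeeping as Lemma \ref{lemma4.2}: write $N_0(B;\grM,\grm)=N(B;\grM\times\grm)-\sum_{(u,v)\in\grX_1\cup\grX_2}\rho(u,v)R_1(u;\grM)R_2(v;\grm)$, handle the full integral $N(B;\grM\times\grm)$ by a direct Cauchy–Bessel argument (the major-arc side is benign), and bound the exceptional piece by $B^{m-3+\eps}\sum_{(u,v)\in\grX_1\cup\grX_2}\rho(u,v)|R_2(v;\grm)|$, to which Cauchy's inequality and Lemma \ref{lemma4.2} (giving $\Zet_i\ll B^{2l-13/4-8\tau}$) apply after one more application of Bessel's inequality for $\sum_v|R_2(v;\grm)|^2\ll B^{2n-13/4-8\tau}$, producing $B^{m-3+\eps}(B^{2l-13/4-8\tau})^{1/2}(B^{2n-13/4-8\tau})^{1/2}=B^{s-6-8\tau+\eps}$.

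For the remaining main term $N(B;\grM\times\grm)$ I would argue as in Lemma \ref{lemma3.4}: by orthogonality and \eqref{3.10}–\eqref{3.14} without the $\grX_0$ restriction,
$$N(B;\grM\times\grm)\ll B^{m-3+\eps}\sum_{v\in\grX}\rho_2(v)|R_2(v;\grm)|\ll B^{m-3+\eps}\Bigl(\sum_{v\in\grX}\rho_2(v)^2\Bigr)^{1/2}\Bigl(\sum_{v\in\grX}|R_2(v;\grm)|^2\Bigr)^{1/2},$$
where $\sum_v\rho_2(v)^2=\int_0^1|F(0,\bet)|^2\d\bet\ll B^{2l-11/4-9\tau}$ by Bessel's inequality and the first bound of Lemma \ref{lemma2.4}, and $\sum_v|R_2(v;\grm)|^2\le\int_\grm|H(\bet)|^2\d\bet\ll B^{2n-13/4-8\tau}$ by the second bound of Lemma \ref{lemma2.4}. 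The exponents combine to $m-3+(l-11/8)+(n-13/8)=s-6-3\tau+\eps$ (checking: $-11/8-13/8=-3$), so $N(B;\grM\times\grm)\ll B^{s-6-3\tau+\eps}$. Taking the minimum of the two contributions, $N_0(B;\grM,\grm)\ll B^{s-6-\tau}$ as claimed. The only delicate point is keeping track of which $\tau$-savings survive, and in particular that $9\tau$ in the first factor degrades to at least $3\tau$ overall — an accounting issue rather than a conceptual one, and one entirely parallel to the type A argument of Lemma \ref{lemma3.4}.
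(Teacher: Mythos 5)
Your overall structure — writing
$$N_0(B;\grM,\grm)=N(B;\grM\times\grm)-\sum_{(u,v)\in\grX_1\cup\grX_2}\rho(u,v)R_1(u;\grM)R_2(v;\grm)$$
and bounding the exceptional piece via Cauchy's inequality and Lemma \ref{lemma4.2} — is exactly the paper's, and your treatment of that exceptional piece is sound (you should use $\int_0^1|H|^2\d\bet\ll B^{2n-11/4-9\tau}$ rather than the minor-arc bound, but that still yields $\ll B^{s-6-17\tau/2}$). The gap is in your estimate for the main term $N(B;\grM\times\grm)$: you invoke
$$\sum_{v\in\grX}|R_2(v;\grm)|^2\le\int_\grm|H(\bet)|^2\d\bet\ll B^{2n-13/4-8\tau}$$
via the second bound of Lemma \ref{lemma2.4}, but that estimate requires $t\ge 4$, whereas for systems of type B one has $n\ge 3$ and, for three of the four admissible shapes (namely $(4,4,3)$, $(4,5,3)$ and $(5,4,3)$), $n=3$. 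Indeed, the opening sentence of \S4 flags precisely this: ``the relative inferiority of the minor arc bounds available for $H(\bet)$ in mean square.'' If one substitutes the only available bound $\int_\grm|H|^2\d\bet\le\int_0^1|H|^2\d\bet\ll B^{2n-11/4-9\tau}$, the exponent in your main-term estimate becomes
$$(m-3)+(l-\tfrac{11}{8}-\tfrac{9}{2}\tau)+(n-\tfrac{11}{8}-\tfrac{9}{2}\tau)=s-\tfrac{23}{4}-9\tau,$$
and since $23/4<6$ this fails to be $O(B^{s-6-\tau})$. (Your written exponent arithmetic, $s-6-3\tau$ with the $13/8$ exponent, was also off — the $\tau$-savings should be $17\tau/2$ — but that slip is harmless; the fatal issue is the $n=3$ case.)

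The paper avoids this by not treating $H(\bet)$ in mean square on its own. It instead bounds the inner integral
$$\int_\grm |F(\alp,\bet)H(\bet)|\d\bet\le g(0)^{n-3}\Bigl(\sup_{\bet\in\grm}|g(d_n\bet)|\Bigr)\int_0^1|F(\alp,\bet)h(d_1\bet)h(d_2\bet)|\d\bet,$$
peeling off a single $g(d_n\bet)$ for a Weyl estimate $\ll B^{3/4+\eps}$, and then estimates the remaining integral by Schwarz's inequality, pairing $h(d_1\bet)h(d_2\bet)$ with the $g$- and $h$-factors inside $F$ (this uses $l\ge 4$). Each paired factor is controlled through the first estimate of Lemma \ref{lemma2.4} after a translation in $\alp$, yielding $\int_0^1|Fh(d_1\bet)h(d_2\bet)|\d\bet\ll B^{l-3/4-9\tau}$, and then the integration over $\alp\in\grM$ against $|G(\alp)|$ via Lemma \ref{lemma2.3} closes the argument with $N(B;\grM\times\grm)\ll B^{s-6-7\tau}$. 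The essential idea you are missing is to exploit the excess $g$-factors of $F$ (from $l\ge4$) to absorb the two $h$-factors of $H$, rather than bounding $H$ in isolation.
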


\begin{proof} Following the argument of the proof of Lemma \ref{lemma4.3}, one finds that
\begin{align*}
N(B;\grM\times \grm)-N_0(B;\grM,\grm)&=\sum_{(u,v)\in \grX_1\cup \grX_2}\rho(u,v)R_1(u;\grM)R_2(v;\grm)\notag \\
&\ll B^{m-3+\eps}(\Zet_1+\Zet_2)^{1/2}\Bigl( \int_0^1|H(\bet)|^2\d\bet \Bigr)^{1/2}.
\end{align*}
Thus we deduce that
\begin{equation}\label{4.2}
N_0(B;\grM,\grm)-N(B;\grM\times \grm)\ll B^{s-6-7\tau}.
\end{equation}

\par We next observe that
\begin{equation}\label{4.3}
N(B;\grM\times \grm)=\int_\grM\int_\grm F(\alp,\bet)G(\alp)H(\bet)\d\bet\d\alp .
\end{equation}
As a consequence of Schwarz's inequality, one has
$$\int_0^1|F(\alp,\bet)h(d_1\bet)h(d_2\bet)|\d\bet\le g(0)^{l-4}U_1^{1/2}U_2^{1/2},$$
where for $i\in \{1,2\}$ we write
$$U_i=\int_0^1|g(a_{2+i}\alp+b_{2+i}\bet)h(a_i\alp+b_i\bet)h(d_i\bet)|^2\d\bet .$$
On considering the underlying Diophantine equations and then appealing to Lemma \ref{lemma2.4}, one discerns that
$$U_i\le \int_0^1|g(b_{2+i}\bet)h(b_i\bet)h(d_i\bet)|^2\d\bet \ll B^{13/4-9\tau}.$$
We therefore deduce from the modified version of Weyl's inequality (\ref{2.11}) that
\begin{align*}
\int_\grm |F(\alp,\bet)H(\bet)|\d\bet &\le g(0)^{n-3}\Bigl(\sup_{\bet\in \grm}|g(d_n\bet)|\Bigr) \int_0^1|F(\alp,\bet)h(d_1\bet)h(d_2\bet)|\d\bet \\
&\ll B^{n-3}(B^{3/4+\eps})(B^{l-3/4-9\tau})\ll B^{s-m-3-8\tau}.
\end{align*}
Substituting this upper bound into (\ref{4.3}) and applying Lemma \ref{lemma2.3}, we obtain
$$N(B;\grM\times \grm)\ll B^{s-m-3-8\tau}\int_\grM |G(\alp)|\d\alp \ll B^{s-6-7\tau}.$$
The conclusion of the lemma follows by reference to (\ref{4.2}).
\end{proof}

We are now equipped to finish off the discussion of systems of type B. Combining the estimates supplied by Lemmata \ref{lemma4.3}, \ref{lemma4.4} and \ref{lemma4.5}, we see that
\begin{align*}
N(B)&\ge N_0(B;\grM,\grM)+N_0(B;\grM,\grm)+N_0(B;\grm,[0,1))\\
&=N(B;\grM\times\grM)+O(B^{s-6-\tau}).
\end{align*}
Hence, in view of (\ref{2.6}), we conclude from Lemmata \ref{lemma2.1} and \ref{lemma4.1} that
$$N(B)\ge N(B;\grN)+O(B^{s-6}\calL^{-1})\gg B^{s-6}.$$
This completes the proof of Theorem \ref{theorem1.1} for systems of type B.

\section{Systems of type C} Our analysis of systems of type C, wherein $s=12$ and $(l,m,n)=(2,5,5)$, may be abbreviated by adjusting the argument of \S3 through modification of the generating functions $F(\alp,\bet)$, $G(\alp)$ and $H(\bet)$. We begin with a discussion of the pruning operation implicit in the estimation of $N(B;\grK)$.\par

\begin{lemma}\label{lemma5.1}
For systems of type C, one has $N(B;\grK)\ll B^6\calL^{-1}$.
\end{lemma}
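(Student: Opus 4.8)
The plan is to mimic the proof of Lemma~\ref{lemma3.1} for systems of type A, borrowing one variable from each of the two long blocks (of $5$) to pad the short block (of $2$) up to a block of $4$. Thus we regroup the $12$ variables as a block of four behaving like the $x$-variables in type A, a block of five contributing to the first equation only, and a block of five contributing to the second equation only—but with the reallocation, one works instead with blocks of sizes $(4,4,4)$ in a type-B--like configuration, while the arithmetic of type A (truncation parameter $T=B^{l-11/4}$) governs the smoothing. Concretely, I would introduce modified generating functions: keeping $h$-factors on the two genuinely short variables and on the two borrowed ones, and $g$-factors on the rest, so that the new $F$, $G$, $H$ have the shape allowed by \S3--\S4. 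The key point is that the hypotheses $s=12$ and $q_0^*=8$ leave enough room in every mean-value estimate.

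First I would set up the decomposition $\grK=\grK_0\cup\grK_1$ exactly as in Lemma~\ref{lemma3.1}, with $\grK_1$ the part where the ``borrowed'' linear form $a_l\alp+b_l\bet$ (now the leading form of the padded block) lies in $\grm$, and $\grK_0$ its complement. On $\grK_1$ the modified Weyl bound (\ref{2.11}) gives $\sup|F(\alp,\bet)|\ll B^{l-1/4+\eps}$ with $l=4$ here, and combining with the bounds $\int_\grM|G|\ll B^{m-3+\eps}$, $\int_\grM|H|\ll B^{n-3+\eps}$ from Lemma~\ref{lemma2.3} (valid for type C since $m,n\ge 4$ after the reallocation, or in the original grouping since $m=n=5$) yields a saving of a positive power of $B$, dominating $B^{6-1/50}$. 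For $\grK_0$, I would reuse the functions $\Phi_G$, $\Phi_H$ of (\ref{3.2}), the integrals $J_0$, $J_{i,k}^E$ of (\ref{3.4})--(\ref{3.5}), and the key sup-bound $\sup_{\grn}\Psi\ll B^{10}Q^{-1/10}$ from (\ref{3.7}); the estimates $I_G,I_H\ll B^{3/2}$, $J_0\ll B^3$ and $J_{i,l}^E\ll B^3$ go through verbatim because Lemma~\ref{lemma2.2} only needs $a$ fixed and $b$ a nonzero rational. Finally a H\"older split with exponents $(1,\tfrac15,\tfrac1{10},\tfrac1{10},\tfrac1{10},\tfrac1{10},\tfrac35)$—or whichever split matches the new block sizes—gives $N(B;\grK_0)\ll B^{s-11}(B^{10}Q^{-1/10})^{1/5}(B^{12})^{1/10}(B^3)^{3/5}=B^{6}Q^{-1/50}$, and since $Q^{-1/50}=L^{-1/5000}\ll \calL^{-1}$ this is the desired bound.

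The main obstacle, and the reason the lemma is stated separately rather than subsumed under \S3, is bookkeeping in the H\"older split: with only $s=12$ variables and the reallocation scheme, one must verify that after peeling off the $\Psi^{1/5}$ factor (which consumes the two $h$-pairs on the short/borrowed variables with a cube weight) and distributing the remaining $g$-power among the $J$-integrals, every exponent of $B$ tallies to exactly $s-6=6$ while the $Q$-power stays negative. I expect the arithmetic to be tight but to close, precisely because $q_0^*\ge 8$ guarantees each of the blocks feeding $G$ and $H$ has at least $4$ genuine cube variables, which is exactly what Lemma~\ref{lemma2.3} and the bound $I_G\ll B^{3/2}$ require. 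No genuinely new estimate beyond those of \S2--\S3 should be needed; the work is in choosing the correct regrouping and exponents so that the \S3 machinery applies with room to spare.
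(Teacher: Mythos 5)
The paper's proof of Lemma~\ref{lemma5.1} is structurally quite different from what you propose: it works directly with the original $(l,m,n)=(2,5,5)$ grouping (no borrowing at the pruning stage), applies no $\grK_0/\grK_1$ split, and uses a custom H\"older inequality featuring the auxiliary integrals $U_{ij}$, $V_{ij}$ (each a product $|g(c_i\alp)|^{\cdot}|g(d_j\bet)|^{\cdot}|h(a_k\alp+b_k\bet)|^2$, cleanly separable in $\alp$ and $\bet$) together with the eighth-moment integrals $W_k=\iint|h(c_k\alp)h(d_k\bet)|^8$, the latter requiring the bound $\int_0^1|h(\tet)|^8\d\tet\ll B^5$ from Vaughan \cite[Theorem 2]{Vau1986} -- an ingredient not used in \S3. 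The borrowing in the paper happens \emph{after} Lemma~\ref{lemma5.1}, only for the machinery of Lemmata~\ref{lemma3.2}--\ref{lemma3.5}.

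Your sketch contains a real gap. Once you borrow variables, the new coefficient pairs are $(\atil_3,\btil_3)=(0,d_5)$ and $(\atil_4,\btil_4)=(c_5,0)$, so for the index $l=\ltil=4$ the linear form $\atil_4\alp+\btil_4\bet$ is the pure $\alp$-form $c_5\alp$. The bound $J_{i,l}^G\ll B^3$ in~(\ref{3.6}) is obtained by changing variable to $\gam=a_l\alp+b_l\bet$ with $\alp$ fixed, which requires $b_l\ne0$; with $\btil_4=0$ this breaks, and in $J_{i,l}^G$ the factor $|g(c_5\alp)|^{5/2}$ cannot be decoupled from $\Phi_G(\alp)$ -- a direct attempt gives only $J_{i,l}^G\ll B^5$, blowing up the H\"older conclusion by a power of $B$. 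So the claim that the $J$-estimates ``go through verbatim because Lemma~\ref{lemma2.2} only needs $a$ fixed and $b$ a nonzero rational'' is precisely where the argument fails: the rational $b$ in question becomes $0$. The approach can be rescued, but only by carefully matching the $\alp$-only borrowed $g$-factor $g(c_5\alp)$ with the $\bet$-only weight $\Phi_H(\bet)$ (i.e.\ using $J_{i,4}^H$), the $\bet$-only $g(d_5\bet)$ with $\Phi_G(\alp)$ (i.e.\ $J_{i,3}^G$), and absorbing the remaining half-powers of $g(c_5\alp)$ and $g(d_5\bet)$ into the trivial $g(0)^{s-11}$ factor; that is a nontrivial rebalancing of the H\"older exponents, not a verbatim reuse. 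For the same reason the $\grK_0/\grK_1$ split you carry over from \S3 is beside the point: in type~C every $g$-factor is $\alp$-only or $\bet$-only, so $\grK\subset\grM\times\grM$ already gives what the split in \S3 was needed for, namely that the arguments of the $g$-factors in the $J$-integrals lie in $\grM$. Finally, a small factual slip: for $(l,m,n)=(2,5,5)$ one has $q_0^*=\min\{10,7,7\}=7$, not $8$; type~C is part of Theorem~\ref{theorem1.1}, where the hypothesis is $q_0^*\ge7$.
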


\begin{proof} Define the mean values
\begin{align*}
U_{ij}&=\iint_\grK |g(c_i\alp)|^{5/2}|g(d_j\bet)|^{9/2}|h(a_1\alp+b_1\bet)|^2\d\alp\d\bet ,\\
V_{ij}&=\iint_\grK |g(c_i\alp)|^{9/2}|g(d_j\bet)|^{5/2}|h(a_2\alp+b_2\bet)|^2\d\alp\d\bet ,\\
W_k&=\int_0^1\int_0^1|h(c_k\alp)h(d_k\bet)|^8\d\alp\d\bet ,
\end{align*}
and put
$$\Psi(\alp,\bet)=|h(c_1\alp)h(c_2\alp)h(d_1\bet)h(d_2\bet)|^3|h(a_1\alp+b_1\bet)h(a_2\alp+b_2\bet)|.$$
Then an application of H\"older's inequality reveals that
\begin{equation}\label{5.1}
N(B;\grK)\le \Bigl( \sup_{(\alp,\bet)\in \grn}\Psi(\alp,\bet)\Bigr)^{1/7}(W_1W_2)^{1/14}\prod_{i=3}^5\prod_{j=3}^5(U_{ij}V_{ij})^{1/21}.
\end{equation}

\par The argument of the proof of \cite[Lemma 10]{BW2007a} shows that
$$\sup_{(\alp,\bet)\in \grn}\Psi(\alp,\bet)\ll B^{14}Q^{-1/10}.$$
As a consequence of Lemma \ref{lemma2.2}, meanwhile, one has
$$U_{ij}\le \Bigl( \int_\grM|g(d_j\bet)|^{9/2}\d\bet\Bigr) \Bigl( \sup_{\lam\in\dbR}\int_\grM|g(c_i\alp)|^{5/2}|h(a_1\alp+\lam)|^2\d\alp\Bigr)\ll B^3,$$
and a symmetric argument yields the estimate $V_{ij}\ll B^3$. Finally, one finds from \cite[Theorem 2]{Vau1986} that
$$W_k\le \Bigl( \int_0^1|h(\tet)|^8\d\tet\Bigr)^2\ll (B^5)^2=B^{10}.$$
Combining these estimates within (\ref{5.1}), we conclude that
$$N(B;\grK)\ll (B^{14}Q^{-1/10})^{1/7}(B^{20})^{1/14}(B^6)^{9/21}\ll B^6Q^{-1/70}.$$
This completes the proof of the lemma.
\end{proof}

Our next step is to relabel the coefficients of the system (\ref{1.1}) so that $\mtil=m-1$, $\ntil=n-1$, $\ltil=l+2$, which is to say that $(\ltil,\mtil,\ntil)=(4,4,4)$, and to put
$$\ctil_j=c_j\quad \text{and}\quad \dtil_j=d_j\quad (1\le j\le 4),$$
and
$$(\atil_i,\btil_i)=(a_i,b_i)\quad (i=1,2),\quad (\atil_3,\btil_3)=(0,d_5)\quad (\atil_4,\btil_4)=(c_5,0).$$
We then define the generating functions $\Ftil(\alp,\bet)$, $\Gtil(\alp)$ and $\Htil(\bet)$ as in the respective definitions of $F(\alp,\bet)$, $G(\alp)$ and $H(\bet)$ in (\ref{2.2}) and (\ref{2.3}), save that in the present context the integers $l$, $m$, $n$, and the coefficients $a_i$, $b_i$, $c_j$ and $d_k$, are to be decorated by tildes. Further notation from \S\S2 and 3 is understood to have the meaning naturally inferred in like manner when decorated by a tilde. An examination of the argument of \S3, leading from the discussion preceding Lemma \ref{lemma3.2} to the conclusion of the section, now reveals that no adjustment is necessary in order to accommodate the change of circumstances implicit in our present analysis. Here it is worth noting that, despite the fact that we now have $\ltil=4$ and $\atil_3=0$, the presence of three non-zero coefficients in the equation (\ref{3.8}) ensures that the analogue of the upper bound (\ref{3.13}) remains valid. Thus one obtains $\Xitil_1\ll B^{4-9\tau}$, and by means of a symmetric argument also $\Xitil_2\ll B^{4-9\tau}$. The analogue of Lemma \ref{lemma3.3} delivers the bound
$$\Ntil_0(B;\grM,\grM)-\Ntil(B;\grM\times \grM)\ll B^{6-\tau},$$
and analogues of Lemmata \ref{lemma3.4} and \ref{lemma3.5} yield the estimates
$$\Ntil_0(B;\grM,\grm)\ll B^{6-\tau},\quad \Ntil_0(B;\grm,\grM)\ll B^{6-\tau},\quad \Ntil_0(B;\grm,\grm)\ll B^{6-\tau}.$$
We therefore conclude that
\begin{align*}
\Ntil(B)&\ge \Ntil_0(B;\grM,\grM)+\Ntil_0(B;\grM,\grm)+\Ntil_0(B;\grm,\grM)+\Ntil_0(B;\grm,\grm)\\
&=\Ntil(B;\grM\times \grM)+O(B^{6-\tau})=N(B;\grM\times \grM)+O(B^{6-\tau}).
\end{align*}
Then, in view of (\ref{2.6}), we conclude from Lemmata \ref{lemma2.1} and \ref{lemma5.1} that
$$N(B)=\Ntil(B)\gg N(B;\grN)+O(B^6\calL^{-1})\gg B^6.$$
This completes the proof of Theorem \ref{theorem1.1} for systems of type C.

\section{Systems of type D} At present, we have been unable to devise an unconditional treatment of systems of the shape (\ref{1.1}) in which $(l,m,n)=(5,5,2)$. A conditional treatment is available by appealing to HRH. In order to describe the nature of this particular Riemann Hypothesis, we must indulge in some discussion. Although a lengthy affair in full, for the sake of concision we conduct a rather sketchy account here of the treatment of systems of type D. Following Hooley \cite[\S\S5 and 6]{Hoo1986}, we consider the cubic form $\grg(\bfx)=x_1^3+\ldots +x_6^3$ and the associated discriminant
$$\Del(\bfm)=3\prod (m_1^{3/2}\pm m_2^{3/2}\pm \ldots \pm m_6^{3/2}),$$
in which the product is taken over all possible choices of the signs. Let $\rho(\bfm;p^r)$ denote the number of points of the projective variety defined by $\grg(\bfx)=\bfm\cdot \bfx=0$, having coordinates in the finite field $\dbF_{p^r}$, and put
\begin{equation}\label{6.1}
E(\bfm;p^r)=\rho(\bfm;p^r)-(p^{4r}-1)/(p^r-1).
\end{equation}
The Euler factors $L_p(\bfm;s)$ are then defined for $p\nmid \Del(\bfm)$ by putting
$$L_p(\bfm;s)=\exp \Bigl( -\sum_{r=1}^\infty E(\bfm;p^r)p^{-rs}/r\Bigr).$$
When $p|\Del(\bfm)$, one must modify the definition of $L_p(\bfm;s)$, as described by Serre \cite{Ser1986}, so that for suitable coefficients $\lam_{j,p}=\lam_{j,p}(\bfm)$ with $1\le |\lam_{j,p}|\le p^{3/2}$, one has
$$L_p(\bfm;s)=\prod_j (1-\lam_{j,p}p^{-s})^{-1}.$$
The number of factors here is at most $10$, the precise definition of which need not detain us. Associated to the modified Hasse-Weil $L$-function
$$L(\bfm;s)=\prod_pL_p(\bfm;s)$$
is the conductor $B(\bfm)$, given by
$$B(\bfm)=\prod_{p|\Del(\bfm)}p^{a_p},$$
in which the exponents $a_p$ are certain non-negative integers with $0\le a_p\le 200$. Finally, we put
$$\xi(\bfm;s)=(2\pi)^{-5s}\Gam(s)^5B(\bfm)^{s/2}L(\bfm;s).$$

\begin{conjecture}[HRH]\label{conjecture6.1} Suppose that $\Del(\bfm)\ne 0$. Then:
\begin{itemize}
\item[(i)] the function $\xi(\bfm;s)$ has a meromorphic continuation to $\dbC$ of finite order, its only possible poles being at $s=\frac{3}{2}$ and $s=\frac{5}{2}$;
\item[(ii)] with $w(\bfm)=\pm 1$, one has the functional equation
$$\xi(\bfm;s)=w(\bfm)\xi(\bfm;4-s);$$
\item[(iii)] when $\text{Re}(s)\ne 2$, one has $\xi(\bfm;s)\ne 0$.
\end{itemize}
\end{conjecture}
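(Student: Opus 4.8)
We stress that HRH is a conjecture, not a theorem: it is an analogue of the Riemann Hypothesis for a family of Hasse--Weil $L$-functions, and we do not prove it — we record only the structure that a proof would have to possess, together with the reasons it lies beyond present technology. The starting point is to recognise $\xi(\bfm;s)$ as, up to the archimedean and bad-prime completions built into its definition, the $L$-function attached to the middle cohomology $H^3$ of the cubic threefold $V_\bfm\subseteq\dbP^4$ cut out by $\grg(\bfx)=\bfm\cdot\bfx=0$. Indeed, the condition $\Del(\bfm)\ne 0$ is precisely the condition that $V_\bfm$ be smooth; the Lefschetz hyperplane theorem then gives $H^1(V_\bfm)=H^5(V_\bfm)=0$ and forces $\rho(\bfm;p^r)=1+p^r+p^{2r}+p^{3r}-\mathrm{tr}(\mathrm{Frob}_{p^r}\mid H^3)$ for $p\nmid\Del(\bfm)$, so that $E(\bfm;p^r)=-\mathrm{tr}(\mathrm{Frob}_{p^r}\mid H^3)$ and hence $L_p(\bfm;s)=\det\bigl(1-p^{-s}\mathrm{Frob}_p\mid H^3\bigr)^{-1}$. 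By Deligne's theorem the Frobenius eigenvalues on $H^3$ at good primes all have absolute value $p^{3/2}$, which explains the convergence of the Euler product for $\mathrm{Re}(s)>5/2$, the bound $1\le|\lam_{j,p}|\le p^{3/2}$ at ramified primes, and the presence of at most $\dim H^3=10$ local factors.

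With this identification the route to parts (i) and (ii) is to establish that the motive $H^3(V_\bfm)$ is automorphic. Since for a smooth cubic threefold $H^3$ is pure of weight $3$ with Hodge numbers concentrated in bidegrees $(2,1)$ and $(1,2)$, it is, as a Galois representation, the $H^1$ of the intermediate Jacobian $J(V_\bfm)$, a $5$-dimensional principally polarised abelian variety over $\dbQ$. One would therefore seek an automorphic representation whose standard $L$-function is $L(\bfm;s)$; the meromorphic continuation of finite order, the location of any poles at $s=\tfrac32$ and $s=\tfrac52$ — these being the only positions compatible with the weight $3$ and with the symmetry $s\mapsto 4-s$, and expected in fact to be absent since $H^1$ of an abelian variety has entire completed $L$-function — and the functional equation $\xi(\bfm;s)=w(\bfm)\xi(\bfm;4-s)$ with $w(\bfm)=\pm1$ would then follow from the analytic theory of automorphic $L$-functions, the local compatibilities being supplied by the recipes of Serre and the theory of $\eps$-factors. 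Part (iii), with critical line $\mathrm{Re}(s)=2$ the centre of the functional equation, is then the Generalised Riemann Hypothesis for that automorphic $L$-function.

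The decisive obstacle is that every ingredient of this programme is open. Automorphy of abelian varieties over $\dbQ$ is presently known only in the $\mathrm{GL}_2$-type case and in a handful of further families, so even the analytic continuation and functional equation demanded by (i) and (ii) are out of reach for a general $5$-dimensional abelian variety; and (iii) is an instance of GRH. It is for exactly this reason that the conclusion is recorded as the hypothesis HRH and is invoked only to settle the single residual configuration $(l,m,n)=(5,5,2)$ of Theorem \ref{theorem1.1}, all of the remaining cases being disposed of unconditionally in \S\S3--5.
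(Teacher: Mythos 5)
You have correctly recognised that this statement is a \emph{conjecture}, not a theorem: the paper does not prove it and never claims to, but rather states it (following Hooley and Serre) as the hypothesis under which the exceptional case $(l,m,n)=(5,5,2)$ can be handled, and your account matches the paper's framing. Your contextual discussion is accurate and well-informed --- identifying $\xi(\bfm;s)$ as the completed $L$-function of $H^3$ of the smooth cubic threefold $\grg(\bfx)=\bfm\cdot\bfx=0$ in $\dbP^4$, noting that $\Del(\bfm)\ne 0$ is the smoothness condition, deducing the point count $\rho(\bfm;p^r)=1+p^r+p^{2r}+p^{3r}-\mathrm{tr}(\mathrm{Frob}_{p^r}\mid H^3)$ from Lefschetz, obtaining the Euler factor degree $\le 10$ and the archimedean bound $|\lam_{j,p}|\le p^{3/2}$ from Deligne, and tying the hoped-for analytic continuation and functional equation to the (open) automorphy of the five-dimensional intermediate Jacobian --- though strictly speaking $H^3(V_\bfm)$ is $H^1$ of the intermediate Jacobian only up to a Tate twist by $\dbQ_\ell(-1)$, which is why the functional equation is centred at $s=2$ rather than $s=1$; this is a purely cosmetic slip in an otherwise correct sketch, and does not affect the (correct) conclusion that all three parts of HRH lie beyond current technology.
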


It is the assertion (iii) of this conjecture that constitutes the Riemann Hypothesis within HRH. The relevance of Conjecture \ref{conjecture6.1} for our work here is made visible by the following lemma.

\begin{lemma}\label{lemma6.2} Provided that {\rm HRH} be valid, one has
$$\int_0^1|f(\tet)|^6\d\tet \ll B^{3+\eps}.$$
\end{lemma}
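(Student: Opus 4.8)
The plan is to connect the sixth moment of $f(\tet)$ to the Hasse-Weil $L$-functions introduced above, following the circle of ideas due to Hooley and Serre. First I would observe that, by orthogonality, $\int_0^1|f(\tet)|^6\d\tet$ counts the number of integral solutions of $x_1^3+\ldots+x_6^3=0$ with $|x_i|\le B$; since the diagonal solutions (those with $\{x_1,\ldots,x_6\}$ arising from cancelling pairs) contribute only $O(B^{3})$, it suffices to bound the number of solutions lying on the non-trivial part of the cubic threefold $\grg(\bfx)=0$. I would then stratify these solutions according to the primitive integer vector $\bfm$ normal to a rational plane section through a given solution: each non-trivial point lies on a plane $\bfm\cdot\bfx=0$, and counting points on $\grg(\bfx)=\bfm\cdot\bfx=0$ in the box is governed, via the explicit formula, by the analytic behaviour of $L(\bfm;s)$. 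The vectors $\bfm$ with $\Del(\bfm)=0$ correspond to degenerate sections and can be handled separately by an elementary count, again of the admissible size $O(B^{3+\eps})$.

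For the main term I would invoke HRH in the form of Conjecture \ref{conjecture6.1}: the meromorphic continuation (i), the functional equation (ii), and crucially the Riemann Hypothesis (iii) placing all zeros on $\text{Re}(s)=2$. Combining (iii) with a standard Phragm\'en--Lindel\"of convexity argument between the line $\text{Re}(s)=2+\eps$ (where the Euler product converges absolutely after subtracting the polar part) and its reflection, one obtains a subconvex, indeed Lindel\"of-on-average, bound for $L(\bfm;s)$ on the critical line in terms of the conductor $B(\bfm)$. Feeding this into a Perron-type contour integral for the number of points on $\grg(\bfx)=\bfm\cdot\bfx=0$ with height up to $B$, and then summing the resulting estimates over the relevant family of normal vectors $\bfm$ (whose number and whose conductors are controlled because the coordinates of $\bfm$ are of polynomial size in $B$), yields the bound $\int_0^1|f(\tet)|^6\d\tet\ll B^{3+\eps}$. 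This is precisely the programme carried out in \cite[\S\S5 and 6]{Hoo1986}, with the modification of the bad Euler factors following \cite{Ser1986}, so I would present the argument as an adaptation of that work rather than reproving it from scratch.

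The main obstacle is bookkeeping the bad primes and the conductor: one must verify that the modified Euler factors $L_p(\bfm;s)$ at $p\mid\Del(\bfm)$, with their $\le 10$ roots $\lam_{j,p}$ of size $1\le|\lam_{j,p}|\le p^{3/2}$ and the conductor exponents $0\le a_p\le 200$, do not spoil the convexity estimate, and that the sum $\sum_{\bfm}B(\bfm)^{\eps}$ over normal vectors in the box remains $O(B^{\eps})$ per vector on average. A secondary technical point is ensuring that the passage from the lattice-point count on each plane section back to the total count is free of over-counting (each non-trivial solution may lie on many rational planes), which is resolved by fixing a canonical choice of $\bfm$ — for instance the one of least height — for each solution. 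Once these divisor-type and counting estimates are in place, the analytic input from HRH gives the claimed bound with room to spare.
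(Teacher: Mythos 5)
Your proposal identifies the right source (Hooley's Hasse--Weil $L$-function framework together with HRH), but it overshoots: the paper does not re-derive or even sketch the $L$-function argument at all. Its proof is a single reduction: writing $r(n)$ for the number of representations of $n$ as a sum of three non-negative integral cubes, one notes that $\int_0^1|f(\tet)|^6\d\tet$ counts solutions of $x_1^3+x_2^3+x_3^3=x_4^3+x_5^3+x_6^3$ in $[-B,B]^6$ and hence is controlled by $\sum_{n\le 3B^3}r(n)^2$; then one cites the already-proved conditional bound $\sum_{n\le x}r(n)^2\ll x^{1+\eps}$ of Hooley \cite[\S 5]{Hoo1996} (see also Heath-Brown \cite[Theorem 1.1]{HB1998}). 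The entire analytic apparatus you describe is internal to those cited papers, and the clean observation that the sixth moment \emph{is} (essentially) $\sum r(n)^2$ is the step your proposal is missing.

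Two further cautions on your sketch of the inner argument, should you ever need it. First, the ``stratify each lattice point by a canonical plane section $\bfm\cdot\bfx=0$ of least height'' device is not how Hooley or Heath-Brown proceed and would create its own over-counting headaches; the $L$-functions $L(\bfm;s)$ enter instead through the circle-method analysis of $\sum r(n)^2$ (controlling local densities and associated exponential sums), not through a direct geometric decomposition of the solution set. Second, HRH combined with Phragm\'en--Lindel\"of yields genuine pointwise Lindel\"of bounds for $L(\bfm;s)$ on the critical line, not merely ``Lindel\"of-on-average,'' and one does not invoke an explicit formula here. Finally, the correct reference for the mean square bound on $r(n)$ is \cite{Hoo1996}, not \cite{Hoo1986}; the latter sets up the $L$-function framework but does not prove the result needed.
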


\begin{proof} When $n$ is a non-negative integer, write $r(n)$ for the number of representations of $n$ as the sum of three non-negative integral cubes. Then, subject to the validity of HRH, Hooley \cite[\S5]{Hoo1996} has shewn that
\begin{equation}\label{6.2}
\sum_{1\le n\le x}r(n)^2\ll x^{1+\eps},
\end{equation}
a conclusion that yields the bound claimed in the lemma as an immediate corollary. We note that Heath-Brown \cite[Theorem 1.1]{HB1998} has also shown that the upper bound (\ref{6.2}) holds conditional on the truth of HRH\footnote{In order to avoid possible confusion, we note that in the display preceding \cite[equation (4.4)]{HB1998}, there is a typographic error which is corrected in (\ref{6.1}) above.}.
\end{proof}

Henceforth in this section, we assume the truth of HRH. We now put $Y=B^{10\tau}$, and introduce the generating functions
$$k_p(\tet)=\sum_{B/p<w\le 2B/p}e(\tet w^3)\quad \text{and}\quad K(\tet ;Y)=\sum_{Y<p\le 2Y}k_p(p^3\tet),$$
in which the letter $p$ is reserved to indicate a prime number in the congruence class $2$ modulo $3$. Finally, we change the definition of the generating function $h(\tet)$ applied hitherto by setting
\begin{equation}\label{6.3}
h(\tet)=\sum_{j=1}^JK(\tet;2^{-j}Y),
\end{equation}
where $J=[\frac{1}{2}\tau \log B]$.

\begin{lemma}\label{lemma6.3} When $a$ and $b$ are non-zero integers, one has
$$\int_\grm |g(a\tet)^4h(b\tet)^6|\d\tet \ll B^{6-3\tau}.$$
\end{lemma}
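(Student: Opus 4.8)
The plan is to dispose of the four factors of $g(a\tet)$ by the pointwise bound of the modified Weyl inequality (\ref{2.11}), and to control the six factors of $h(b\tet)$ through a conditional mean value estimate supplied by Lemma \ref{lemma6.2} together with a minor arc saving for $h$ that exploits the bilinear, prime-augmented structure built into the definition (\ref{6.3}). For the mean value input, note first that each cube counted by $g(a\tet)^6$, and each counted by $h(b\tet)^6$ (the weights occurring being $O(1)$, since $m\in (B,2B]$ has boundedly many prime divisors in the range $(2^{-J}Y,Y]$), lies amongst those counted by $f(\tet)^6$; hence Lemma \ref{lemma6.2} gives $\int_0^1|g(a\tet)|^6\d\tet\ll B^{3+\eps}$ and $\int_0^1|h(b\tet)|^6\d\tet\ll B^{3+\eps}$, whence by H\"older's inequality
$$\int_0^1|g(a\tet)^2h(b\tet)^4|\d\tet\ll B^{3+\eps}.$$
Consequently, by (\ref{2.11}),
$$\int_\grm|g(a\tet)^4h(b\tet)^6|\d\tet\le\Bigl(\sup_{\tet\in\grm}|g(a\tet)|\Bigr)^2\Bigl(\sup_{\tet\in\grm}|h(b\tet)|\Bigr)^2\int_0^1|g(a\tet)^2h(b\tet)^4|\d\tet\ll B^{3/2+\eps}\Bigl(\sup_{\tet\in\grm}|h(b\tet)|\Bigr)^2,$$
so it suffices to establish the minor arc bound $\sup_{\tet\in\grm}|h(b\tet)|\ll B^{3/4-c\tau}$ for some constant $c>3/2$. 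The classical Weyl estimate applied termwise to the constituents of $h$ yields only $\sup_{\tet\in\grm}|h(b\tet)|\ll B^{3/4+\eps}$, which falls just short, delivering $B^{6+\eps}$ in place of $B^{6-3\tau}$; a genuine saving beyond the triangle inequality is therefore needed.

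To obtain it, I would reduce first to a single dyadic range of auxiliary primes: by H\"older's inequality $|h(b\tet)|^6\ll(\log B)^{O(1)}\max_{1\le j\le J}|K(b\tet;2^{-j}Y)|^6$, so it is enough to bound $K(b\tet;Y')=\sum_{Y'<p\le 2Y'}k_p(p^3b\tet)$ for each dyadic $Y'\le Y$, this being a sum over the $\asymp Y'/\log Y'$ primes $p\equiv 2\mmod{3}$ in $(Y',2Y']$ of the cube Weyl sums $k_p(p^3b\tet)=\sum_{B/p<w\le 2B/p}e(b\tet(pw)^3)$, each of length $\asymp B/Y'$. For $\tet\in\grm$, Weyl's inequality at the shorter scale $B/p$ gives $|k_p(p^3b\tet)|\ll(B/p)^{3/4+\eps}$ unless $bp^3\tet$ lies on a major arc relative to that scale, and summing these bounds termwise over $p$ and over the $J$ dyadic ranges produces only $B^{3/4+\eps}Y^{1/4}$, of size $B^{3/4+O(\tau)}$. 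The improvement must come from cancellation in the sum over $p$: via Cauchy's inequality one is reduced to bounding $\sum_{Y'<p\le2Y'}|k_p(p^3b\tet)|^2$, which on expanding $|k_p|^2$ and writing the inner variables as $pw$ becomes $\sum_{B<m_1,m_2\le2B}\bigl(\#\{p:Y'<p\le2Y',\,p\mid(m_1,m_2)\}\bigr)e(b\tet(m_1^3-m_2^3))$; grouping by $d=(m_1,m_2)$, the off-diagonal is a sum over $d$ of shifted square-sums $|\sum_{m'\asymp B/d}e(b\tet d^3m'^3)|^2$, and for $\tet\in\grm$ the arguments $b\tet d^3$ are minor at scale $B/d$ for all but a controlled family of $d$, yielding $\sum_{Y'<p\le2Y'}|k_p(p^3b\tet)|^2\ll B^{3/2-c'\tau}$ for some $c'>0$, hence $\sup_{\tet\in\grm}|h(b\tet)|\ll B^{3/4-c\tau}$ with $c>3/2$. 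Substituting into the display above and choosing $\eps$ small in terms of $\tau$ then completes the proof. In place of the conditional bound $\int_0^1|g^2h^4|\ll B^{3+\eps}$ one may equally use $\int_\grm|g(a\tet)^4h(b\tet)^4|\d\tet\ll B^{9/2+\eps}$, derived from Lemma \ref{lemma2.4} and (\ref{2.11}), via $\int_\grm|g^4h^6|\le(\sup_\grm|h|)^2\int_\grm|g^4h^4|$; the arithmetic is unchanged.

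The main obstacle is precisely the analysis of this exceptional family of moduli $d$ — equivalently, of the primes $p$ for which $bp^3\tet$ is major at the short scale $B/p$ while $\tet$ lies on the minor arcs $\grm$ at scale $B$. Reconciling these two dissections requires a spacing argument for the rationals $r/(sbd^3)$ that arise, exploiting crucially that $\tau$ is extremely small, so that distinct rationals of the pertinent size are separated by much more than $B^{-9/4}$; one must then show that the moduli so produced are sparse enough, or the associated sums short enough, that their aggregate contribution does not destroy the saving. This is the prime-augmented analogue of the smooth-number arguments of \cite{Woo1995b,Woo2000} underlying (\ref{2.1}), and I expect it to run along parallel lines with a sieve over the auxiliary primes replacing the sieve over smooth numbers. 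A secondary point meriting attention is the verification that (\ref{2.1}), and hence Lemma \ref{lemma2.4}, continues to hold with $h$ redefined as in (\ref{6.3}); this should be routine, the new $h$ being if anything better adapted to those methods, as the sixth-moment bounds of Hooley and Heath-Brown already suggest.
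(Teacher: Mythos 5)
Your opening reduction is sound as far as it goes, but the proof hinges on a pointwise minor-arc estimate for $h$ that is neither established in the sketch nor, in my judgement, obtainable by the route you outline, and which the paper deliberately avoids.

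First, a small arithmetic slip: your display should read
$$\int_\grm|g(a\tet)^4h(b\tet)^6|\d\tet\le\Bigl(\sup_{\tet\in\grm}|g(a\tet)|\Bigr)^2\Bigl(\sup_{\tet\in\grm}|h(b\tet)|\Bigr)^2\int_0^1|g(a\tet)^2h(b\tet)^4|\d\tet\ll B^{9/2+\eps}\Bigl(\sup_{\tet\in\grm}|h(b\tet)|\Bigr)^2,$$
not $B^{3/2+\eps}(\sup|h|)^2$; your subsequent ``it suffices'' is based on the corrected figure, so this is not fatal. The genuine difficulty is the claim that one needs, and can prove, $\sup_{\tet\in\grm}|h(b\tet)|\ll B^{3/4-c\tau}$ with $c>3/2$. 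Two things are wrong here. (i) Your baseline is too optimistic: since $k_p(bp^3\tet)$ has frequency $bp^3\tet$ and length $\asymp B/p$, the dissection of $[0,1)$ at scale $B$ does not match the dissection relevant to $k_p$ at scale $B/p$; one can only deduce from $\tet\in\grm$ that $bp^3\tet\in\grm(B^{3/4}Y^{-4})$, whence $|k_p(bp^3\tet)|\ll B^{3/4+\eps}Y^2$, and termwise summation over $p$ and over $j$ delivers $\sup_{\tet\in\grm}|h(b\tet)|\ll B^{3/4+\eps}Y^3L\ll B^{4/5}$, a bound \emph{worse} than $B^{3/4+\eps}$. Thus the deficit you would have to recover with your bilinear argument is not $c\tau$ but roughly $(30+c)\tau$ in the exponent. (ii) The bilinear Cauchy/gcd-decomposition argument is sketched only to the point where you yourself identify the ``main obstacle''; the analysis of the exceptional moduli $d$ (equivalently, of the primes $p$ for which $bp^3\tet$ is nearly rational at scale $B/p$ while $\tet\in\grm$) is exactly where the difficulty lives, and it is not carried out. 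A pointwise power-saving below Weyl on the full minor arcs $\grm$ of a cubic exponential sum, even one with this auxiliary prime structure, should not be expected to follow from a short spacing argument.

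The paper sidesteps the issue entirely: it introduces a wider set of minor arcs $\grm(BY^3)$ and proves the \emph{mean-value} estimate $\int_{\grm(BY^3)}|g(a\tet)^2h(b\tet)^6|\d\tet\ll B^{9/2-4\tau}$ by the method of \cite[Theorem~3.1]{BW2010}, with the conditional bound of Lemma~\ref{lemma6.2} replacing (\ref{2.1}); on $\grm(BY^3)$ only the standard Weyl bound $|g(a\tet)|\ll B^{3/4+\eps}$ is needed, giving $\int_{\grm(BY^3)}|g^4h^6|\ll B^{6-3\tau}$. The complementary set $\grm\setminus\grm(BY^3)$ is then handled using the weak bound $\sup_{\grm}|h|\ll B^{4/5}$ together with $\int_{\grM(BY^3)}|g(a\tet)|^4\d\tet\ll B^{1+\eps}Y^{12}$. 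The saving thus lives in mean square, not pointwise, and the prime structure of $h$ enters through the double-large-sieve machinery of \cite{BW2010}, not through a direct refinement of Weyl's inequality. You should reorient your argument along these lines; as it stands, the central estimate of your proposal is a gap.
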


\begin{proof} We apply the argument of the proof of Theorem 3.1 of the authors' recent work \cite{BW2010} concerning sums of cubes and minicubes, substituting the conditional bound supplied by Lemma \ref{lemma6.2} in place of the bound tantamount to (\ref{2.1}) employed in \cite{BW2010}. In the first instance, the relevance of this new bound is seen on considering the underlying Diophantine equations. One finds that
$$\int_0^1|K(\tet;Y)|^6\d\tet \ll B^{3+\eps},$$
and likewise
$$\int_0^1\Bigl( \max_{Y<p\le 2Y}|k_p(\tet)|\Bigr)^6\d\tet \ll (B/Y)^{3+\eps}.$$

\par Next, when $X$ is a real parameter with $1\le X\le B^{3/2}$, define
$$\grM(q,a;X)=\{ \tet \in [0,1):|q\tet-a|\le XB^{-3}\},$$
and then take $\grM(X)$ to be the union of the arcs $\grM(q,a;X)$ with $0\le a\le q\le X$ and $(a,q)=1$. We put $\grm(X)=[0,1)\setminus\grM(X)$. In view of our choice for $Y$, it follows by an application of H\"older's inequality paralleling that employed in the proof of \cite[Corollary 3.2]{BW2010}, that
$$\int_{\grm(BY^3)}|g(a\tet)^2h(b\tet)^6|\d\tet \ll B^{9/2+\eps}(Y/B^\tau)^{-1/2}\ll B^{9/2-4\tau}.$$
As a consequence of Weyl's inequality (see \cite[Lemma 2.4]{Vau1997}), moreover, one has
$$\sup_{\tet \in \grm(BY^3)}|g(a\tet)|\ll B^{3/4+\eps}.$$
Hence we deduce that
\begin{equation}\label{6.4}
\int_{\grm(BY^3)}|g(a\tet)^4h(b\tet)^6|\d\tet \ll (B^{3/4+\eps})^2B^{9/2-4\tau}\ll B^{6-3\tau}.
\end{equation}

\par We next prune from $\grm$ to $\grm(BY^3)$. By a modified version of Weyl's inequality akin to that embodied in (\ref{2.11}), one finds that whenever $Y<p\le 2Y$, then
$$\sup_{\tet\in \grm}|k_p(bp^3\tet)|\ll \sup_{\tet \in \grm (B^{3/4}Y^{-4})}|k_p(\tet)|\ll B^{3/4+\eps}Y^2,$$
whence
$$\sup_{\tet\in\grm}|h(b\tet)|\ll B^{3/4+\eps}Y^3L\ll B^{4/5}.$$
In addition, the methods of \cite[\S\S4.3 and 4.4]{Vau1997} permit one to establish the estimate
$$\int_{\grM(BY^3)}|g(a\tet)|^4\d\tet \ll B^{1+\eps}Y^{12}.$$
We are consequently led to the upper bound
\begin{align*}
\int_{\grm\setminus \grm(BY^3)}|g(a\tet)^4h(b\tet)^6|\d\tet &\ll \Bigl( \sup_{\alp\in\grm}|h(b\tet)|\Bigr)^6\int_{\grM(BY^3)}|g(a\tet)|^4\d\tet \\
&\ll (B^{4/5})^6B^{1+\eps}Y^{12}\ll B^{6-3\tau}.
\end{align*}
The conclusion of the lemma follows by reference to (\ref{6.4}).
\end{proof}

We now aim to follow the argument of \S4, making adjustments as necessary. We first revise the definitions of the generating functions in (\ref{2.2}) and (\ref{2.3}) by putting
$$F(\alp,\bet)=g(a_4\alp+b_4\bet)g(a_5\alp+b_5\bet)\prod_{i=1}^3h(a_i\alp+b_i\bet),$$
$$G(\alp)=g(c_4\alp)g(c_5\alp)\prod_{j=1}^3h(c_j\alp),\quad H(\bet)=g(d_1\bet)g(d_2\bet).$$
Defining $N(B;\grB)$ as in (\ref{2.4}), we again obtain the lower bound (\ref{2.5}) for $N(B)$. All other definitions remain unchanged in the discussion to follow, unless explicitly noted. Notice that the new definition (\ref{6.3}) of the generating function $h(\tet)$ ensures that its behaviour on major arcs is very nearly as congenial as that of $g(\tet)$, since it differs from a classical Weyl sum only by the presence of a small prime factor. Indeed, one may sum trivially over this factor whenever necessary, treating the remaining part as a classical Weyl sum. In this way, one may verify that the conclusions of Lemmata \ref{lemma2.1}, \ref{lemma2.2} and \ref{lemma2.3} remain valid in the current situation, despite the novel identity of the exponential sum $h(\tet)$.\par

Our next step is to derive an analogue of the pruning lemma of \S4.

\begin{lemma}\label{lemma6.4} For systems of type D, one has $N(B;\grK)\ll B^6\calL^{-1}$.
\end{lemma}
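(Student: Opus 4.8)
The plan is to mimic the proof of Lemma~\ref{lemma4.1}, exploiting the fact --- noted in the preamble to this lemma --- that on the major arcs the revised sum $h(\tet)$ behaves essentially like the classical Weyl sum $g(\tet)$, so that Lemmata~\ref{lemma2.2} and~\ref{lemma2.3} are available. The novelty of the present situation is that $H(\bet)=g(d_1\bet)g(d_2\bet)$ carries only two variables, so the short block is genuinely lean, and the minor-arc strength must be harvested from the $h$'s appearing in $F$ via Lemma~\ref{lemma6.3} rather than from $H$. First I would split $\grK$ according to whether $a_4\alp+b_4\bet$ and $a_5\alp+b_5\bet$ lie in $\grM$ or in $\grm$, writing $\grK_0$ for the subset on which both lie in $\grM$. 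On the complementary part, the modified Weyl inequality~(\ref{2.11}) gives $\sup|g(a_i\alp+b_i\bet)|\ll B^{3/4+\eps}$ for the offending index $i\in\{4,5\}$; combining this with Schwarz's inequality applied to the remaining factor of $F$ (which retains $h(a_1\alp+b_1\bet)h(a_2\alp+b_2\bet)g(a_j\alp+b_j\bet)$ with $\{i,j\}=\{4,5\}$) together with the mean square bounds of Lemma~\ref{lemma2.4} and the major-arc bound $\int_\grM|G(\alp)|\d\alp\ll B^{m-3+\eps}$ of Lemma~\ref{lemma2.3}, and a trivial bound $|H(\bet)|\le B^2$, should yield a contribution $\ll B^{6-\tau}$, reducing us to the study of $N(B;\grK_0)$.

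On $\grK_0$ I would apply H\"older's inequality in the spirit of the proof of Lemma~\ref{lemma4.1}, distributing the exponents so as to play the factors $g(c_4\alp)$, $g(c_5\alp)$, $g(d_1\bet)$, $g(d_2\bet)$, $g(a_4\alp+b_4\bet)$, $g(a_5\alp+b_5\bet)$ against the $h$-factors $h(c_1\alp)$, $h(c_2\alp)$, $h(c_3\alp)$, $h(a_1\alp+b_1\bet)$, $h(a_2\alp+b_2\bet)$, $h(a_3\alp+b_3\bet)$. The key inputs are: a sixth-moment type estimate of the form $\int_\grm|g(a\tet)^4h(b\tet)^6|\d\tet\ll B^{6-3\tau}$ from Lemma~\ref{lemma6.3} (used with a change of variables to absorb the coefficients, after fixing the values of the other linear forms), employed to deal with the block combining one or two $g$'s on the $\grM\times\grM$-side with the minor-arc $h$'s coming from $F$; major-arc mean value bounds from Lemma~\ref{lemma2.2}, in the shape $\sup_\lam\int_\grM|g(a\tet)|^{2+\del}|h(b\tet+\lam)|^2\d\tet\ll B^{1+\del}$, for the pairings that remain on the major arcs; and the decay estimate $\sup_{(\alp,\bet)\in\grn}\Psi(\alp,\bet)\ll B^{?}Q^{-1/10}$ for a suitably chosen product $\Psi$ of $h$-factors, which furnishes the saving by a power of $Q$ (here one argues as in \cite[Lemma 10]{BW2007a}, noting that on $\grK_0\subseteq\grn$ at least one of $\alp$, $\bet$, or a linear form in them is minor-arc-like relative to the narrow arcs $\grN$). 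Balancing these exponents so that the $g$- and $h$-powers each total the available number of variables, and tracking the arithmetic of $s=12$ with $(l,m,n)=(5,5,2)$, should deliver $N(B;\grK_0)\ll B^6 Q^{-c}$ for some positive constant $c$, hence $\ll B^6\calL^{-1}$.

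The main obstacle I anticipate is the bookkeeping in the H\"older dissection: with only two variables in $H$, the $\grm$-savings must come almost entirely from the $h$-factors inside $F$ and $G$, and one must verify that Lemma~\ref{lemma6.3}'s exponent $6-3\tau$ (versus the $13/4$-type exponents of~(\ref{2.1})) is enough, once raised to the appropriate fractional power and combined with the trivial bound $B^2$ for $H$, to beat the target $B^6$ by a power of $Q$ --- in particular one must confirm that the change of variables needed to handle the coefficients $a_i,b_i,c_j,d_k$ (and the fixed values of the suppressed linear forms) does not degrade the exponent. A secondary point requiring care is to check that the conclusions of Lemmata~\ref{lemma2.1}--\ref{lemma2.3} really do survive the replacement of $h$ by~(\ref{6.3}): as indicated in the text, one sums trivially over the small prime factor $p\in(Y/2^{j},2Y/2^{j}]$, treating $k_p(p^3\tet)$ as a classical Weyl sum of length $B/p$, so that the major-arc estimates pick up at worst harmless factors of $\log B$; I would record this once at the outset and then proceed exactly as in \S4.
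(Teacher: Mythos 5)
Your proposal takes a genuinely different route from the paper, and the core idea --- to extract the needed saving from the minor-arc sixth moment of Lemma~\ref{lemma6.3} --- does not apply on $\grK$, so there is a real gap.

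Recall that $\grK=(\grM\times\grM)\setminus\grN$, so on $\grK$ both $\alp$ and $\bet$ lie in the (wide) one-dimensional major arcs $\grM$: there is no one-dimensional minor-arc region from which to draw strength, and the only saving available is the factor $Q^{-c}$ coming from being outside the narrow arcs $\grN$, captured by the pointwise bound $\sup_{(\alp,\bet)\in\grn}\Psi(\alp,\bet)\ll B^6Q^{-1/10}$. Lemma~\ref{lemma6.3} estimates $\int_\grm|g(a\tet)^4h(b\tet)^6|\d\tet$, an integral over the one-dimensional \emph{minor} arcs; no change of variable converts an integral over $\grK\subseteq\grM\times\grM$ (or over $\grK_0$) into such an integral, so the ``key input'' you name is simply unavailable here. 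The factors $h(a_j\alp+b_j\bet)$ are indeed not constrained to $\grM$, but the correct way to handle them is not a minor-arc moment: the paper fixes $\alp$ and writes $h(a_j\alp+b_j\bet)=h(b_j\bet+\lam)$, exploiting the $\sup_\lam$ built into Lemma~\ref{lemma2.2}, together with the unconditional eighth moment $\int_0^1|h(\tet)|^8\d\tet\ll B^5$ (via \cite[Theorem 2]{Vau1986}) for the factors $h(c_j\alp)h(a_j\alp+b_j\bet)$. Your list of ingredients is missing this eighth moment entirely.

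The paper's actual proof is also structurally simpler than you anticipate. There is no splitting of $\grK$ into $\grK_0$ and a complement: a single application of H\"older's inequality is made on all of $\grK$. The two factors $g(a_4\alp+b_4\bet)$ and $g(a_5\alp+b_5\bet)$ are simply bounded trivially by $g(0)^2\ll B^2$ and never enter the H\"older weights, so there is no need to localise these linear forms on $\grM$ and no need to handle a complementary set at all. The H\"older weights (exponents $13/57$ on $\Psi$ and $1/57$ on each $U_{4j}^9U_{5j}^9W_j$) then distribute the remaining $g$'s and $h$'s so that $U_{ij}$ is controlled purely by Lemma~\ref{lemma2.2} on $\grM\times\grM$ and $W_j$ by the unrestricted eighth moment, giving $N(B;\grK)\ll B^6Q^{-13/570}$. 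In particular, Lemma~\ref{lemma6.4} is unconditional: HRH (via Lemma~\ref{lemma6.3}) enters only later, in Lemmata~\ref{lemma6.5}--\ref{lemma6.8}. Your instinct to re-verify that Lemmata~\ref{lemma2.1}--\ref{lemma2.3} survive the redefinition~(\ref{6.3}) of $h$ is sound and matches the paper's remarks, but your worry about whether ``$6-3\tau$ is enough to beat $B^6$ by a power of $Q$'' points at the real mismatch: a power-of-$B$ saving from a minor-arc moment is the wrong kind of saving to be looking for on $\grK$, and insisting on it leads you away from the straightforward argument.
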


\begin{proof} Define the mean values
\begin{align*}
U_{ij}&=\iint_\grK |g(c_i\alp)g(d_i\bet)|^{19/9}|h(a_j\alp+b_j\bet)h(c_j\alp)|^2\d\alp\d\bet ,\\
W_j&=\int_0^1\int_0^1|h(c_j\alp)h(a_j\alp+b_j\bet)|^8\d\alp\d\bet ,
\end{align*}
and put
$$\Psi(\alp,\bet)=\prod_{j=1}^3|h(c_j\alp)h(a_j\alp+b_j\bet)|.$$
Then an application of H\"older's inequality reveals that
\begin{equation}\label{6.5}
N(B;\grK)\le g(0)^2\Bigl( \sup_{(\alp,\bet)\in \grn}\Psi(\alp,\bet)\Bigr)^{13/57}\prod_{j=1}^3(U_{4j}^9U_{5j}^9W_j)^{1/57}.
\end{equation}

\par The argument of the proof of \cite[Lemma 10]{BW2007a} shows that
$$\sup_{(\alp,\bet)\in \grn}\Psi(\alp,\bet)\ll B^6Q^{-1/10}.$$
As a consequence of Lemma \ref{lemma2.2}, meanwhile, one has
\begin{align*}
U_{ij}&\le \Bigl( \int_\grM |g(c_i\alp)|^{19/9}|h(c_j\alp)|^2\d\alp\Bigr) \Bigl( \sup_{\lam\in \dbR}\int_\grM |g(d_i\bet)|^{19/9}|h(b_j\bet+\lam)|^2\d\bet\Bigr)\\
&\ll (B^{10/9})^2=B^{20/9}.
\end{align*}
Also, by considering the underlying Diophantine equations, making a change of variables, and applying \cite[Theorem 2]{Vau1986}, one sees that
$$W_j\le \Bigl( \int_0^1|h(\tet)|^8\d\tet\Bigr)^2\ll (B^5)^2=B^{10}.$$
Combining these estimates within (\ref{6.5}), we conclude that
$$N(B;\grK)\ll B^2(B^6Q^{-1/10})^{13/57}((B^{20})^2B^{10})^{3/57}\ll B^6Q^{-13/570}.$$
The conclusion of the lemma now follows.
\end{proof}

We now proceed as in \S4, adopting the notation introduced in the discussion prior to Lemmata \ref{lemma3.2} and \ref{lemma3.3}. In present circumstances we have $(l,m,n)=(5,5,2)$, though comparison with \S4 will be assisted in what follows by explicit mention of $l$, $m$ and $n$. Thus, for systems of type D, the truncation parameter is fixed to be $T=B^{l-3+\tau}$, just as in \S4. Our next task is to derive a bound for the quantity $\Zet_i$ introduced in the preamble to Lemma \ref{lemma4.2}.

\begin{lemma}\label{lemma6.5} For systems of type D, one has $\Zet_i\ll B^{2l-4-3\tau}$ $(i=1,2)$.
\end{lemma}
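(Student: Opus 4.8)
The quantity $\Zet_i$ measures the $\ell^2$-size over $v\in\grX$ of the restriction of $\rho(u,v)$ to the exceptional set $\grX_i$. The proof should follow the template of Lemma~\ref{lemma4.2}, with the single substantive change that the minor arc mean-square estimate for $F(\cdot,0)$ must now be extracted not from Lemma~\ref{lemma2.4} but from the new conditional estimate of Lemma~\ref{lemma6.3}, since the generating function $F(\alp,\bet)=g(a_4\alp+b_4\bet)g(a_5\alp+b_5\bet)\prod_{i=1}^3h(a_i\alp+b_i\bet)$ for type~D has three $h$-factors and two $g$-factors rather than the configuration handled by $E_t$. First I would record the key auxiliary bound
$$
\int_\grm |F(\alp,0)|^2\d\alp \ll B^{2l-4-3\tau},
$$
obtained by Schwarz's inequality applied to split the two $g$-factors and the three $h$-factors symmetrically, followed by an appeal to Lemma~\ref{lemma6.3} with the coefficients $a=a_j$, $b=b_j$; one writes $|F(\alp,0)|^2\le (\cdots)^{1/2}(\cdots)^{1/2}$ so that each factor is an integral of the shape $\int_\grm|g(a\alp)^4 h(b\alp)^6|\d\alp\ll B^{6-3\tau}$, and since $2l-4=6$ when $l=5$ the exponents match after taking the geometric mean. (The same estimate holds with $F(0,\bet)$ in place of $F(\alp,0)$ by the evident symmetry, $l=m$ for type~D.)

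**The case $i=1$.** Here I would mimic verbatim the argument of Lemma~\ref{lemma4.2}. If $u\in\grX$ satisfies $\rho_1(u)>T=B^{l-3+\tau}$, then $\int_0^1 F(\alp,0)e(-u\alp)\d\alp=\rho_1(u)>B^{l-3+\tau}$, while the major arc contribution is $O(B^{l-3+\eps})$ by the argument of Lemma~\ref{lemma2.3} (valid for type~D, as noted in the text following Lemma~\ref{lemma6.3}, because $h$ behaves on $\grM$ essentially like a classical Weyl sum and $m\ge 4$). Hence $|\int_\grm F(\alp,0)e(-u\alp)\d\alp|\ge\tfrac12\rho_1(u)$, and Bessel's inequality combined with the auxiliary bound above gives
$$
\sum_{(u,v)\in\grX_1}\rho(u,v)\le B^{3-l-\tau}\!\!\!\!\sum_{\substack{u\in\grX\\ \rho_1(u)>B^{l-3+\tau}}}\!\!\!\!\rho_1(u)^2\ll B^{3-l-\tau}\!\int_\grm|F(\alp,0)|^2\d\alp\ll B^{3-l-\tau}B^{2l-4-3\tau}=B^{l-1-4\tau}.
$$
Then, exactly as before, $(u,v)\in\grX_1$ forces $\sum_{u:(u,v)\in\grX_1}\rho(u,v)\le\rho_2(v)\le B^{l-3+\tau}$, so
$$
\Zet_1\le B^{l-3+\tau}\!\!\sum_{(u,v)\in\grX_1}\!\!\rho(u,v)\ll B^{l-3+\tau}B^{l-1-4\tau}=B^{2l-4-3\tau},
$$
as required.

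**The case $i=2$, and the main obstacle.** For $i=2$ the simpler path of Lemma~\ref{lemma4.2} applies: $\Zet_2\le\sum_{v:\rho_2(v)>B^{l-3+\tau}}\rho_2(v)^2\ll\sum_{v\in\grX}|\int_\grm F(0,\bet)e(-\bet v)\d\bet|^2\le\int_\grm|F(0,\bet)|^2\d\bet\ll B^{2l-4-3\tau}$, the last step again by the auxiliary bound (with rows swapped). I expect the only real point requiring care is the verification of that auxiliary minor-arc estimate — specifically, checking that Lemma~\ref{lemma6.3} can be invoked with arbitrary non-zero integer coefficients $a_j,b_j$ on the inner $g$ and $h$ factors after the change of variables, and that the Schwarz split genuinely produces integrands of the form $|g(a\tet)^4h(b\tet)^6|$ with the $g$-exponent reaching $4$ (it does: two $g$-factors squared gives $|g|^4$, and pairing them appropriately across the two applications of Schwarz). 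Everything else is a routine transcription of \S4 with $2l-11/4-9\tau$ replaced by $2l-4-3\tau$ and the bookkeeping adjusted accordingly; since $\tau=\tfrac1{10}\tau_0$ is small, the loss of $3\tau$ here is comfortably absorbed in all the subsequent estimates, just as $8\tau$ was in \S4.
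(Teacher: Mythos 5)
Your proposal is correct and follows the same route as the paper's (very terse) proof, which simply says to adapt Lemma~\ref{lemma4.2} using the auxiliary minor-arc estimates
$$
\int_\grm|F(\alp,0)|^2\d\alp\ll B^{2l-4-3\tau},\qquad \int_\grm|F(0,\bet)|^2\d\bet\ll B^{2l-4-3\tau},
$$
obtained from Lemma~\ref{lemma6.3} via H\"older's inequality. Your transcription of the Lemma~\ref{lemma4.2} argument in both cases $i=1$ and $i=2$ is accurate, and the observation that $T=B^{l-3+\tau}$ combines with the auxiliary bound to give exactly $B^{2l-4-3\tau}$ is the right bookkeeping. One small imprecision: you describe the derivation of the auxiliary bound as two applications of Schwarz and assert that ``two $g$-factors squared gives $|g|^4$'' --- but the two $g$-factors carry distinct coefficients $a_4,a_5$, and there are three $h$-factors, which cannot be paired off by Schwarz alone. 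The clean derivation is one application of Schwarz to separate $g(a_4\alp)$ from $g(a_5\alp)$ (promoting each from a square to a fourth power in its own factor), followed by a three-way H\"older to distribute $|h(a_1\alp)h(a_2\alp)h(a_3\alp)|^2$ as $|h(a_j\alp)|^6$ across the three factors; each resulting integral has the form $\int_\grm|g(a\tet)^4h(b\tet)^6|\d\tet\ll B^{6-3\tau}$ handled by Lemma~\ref{lemma6.3}. This is a presentational quibble rather than a gap, and the argument goes through as you describe.
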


\begin{proof} The reader will experience no difficulty in adapting the argument of the proof of Lemma \ref{lemma4.2} to establish the claimed bounds, substituting when needed the estimates
$$\int_\grm|F(\alp,0)|^2\d\alp\ll B^{2l-4-3\tau}\quad \text{and}\quad \int_\grm|F(0,\bet)|^2\d\bet\ll B^{2l-4-3\tau},$$
made available from Lemma \ref{lemma6.3} via H\"older's inequality.
\end{proof}

\begin{lemma}\label{lemma6.6} For systems of type D, one has
$$N_0(B;\grM,\grM)-N(B;\grM\times \grM)\ll B^{6-\tau}.$$
\end{lemma}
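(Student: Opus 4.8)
The plan is to follow the template of Lemma~\ref{lemma4.3}, adapting the numerology to the weaker exponents available for systems of type D. First I would recall that, by the discussion preceding this lemma, the truncation parameter is $T=B^{l-3+\tau}$ exactly as in \S4, and that the generating functions $F$, $G$, $H$ have been redefined so that Lemmata~\ref{lemma2.1}, \ref{lemma2.2} and \ref{lemma2.3} continue to hold. In particular, Lemma~\ref{lemma2.3} still supplies $R_1(u;\grM)\ll B^{m-3+\eps}$, since for type D one has $m=5\ge 4$. The key new input is Lemma~\ref{lemma6.5}, which gives $\Zet_i\ll B^{2l-4-3\tau}$ in place of the bound $B^{2l-13/4-8\tau}$ used in Lemma~\ref{lemma4.2}; this is the only place where the conditional character of the argument enters.

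The main step is to estimate the quantity $\Ups_i=\sum_{(u,v)\in \grX_i}\rho(u,v)|R_2(v;\grM)|$ for $i=1,2$, just as in the proof of Lemma~\ref{lemma4.3}. To bound $\sum_{v\in\grX}|R_2(v;\grM)|^2$ I would apply Bessel's inequality, obtaining $\sum_{v\in\grX}|R_2(v;\grM)|^2\le\int_0^1|H(\bet)|^2\d\bet$; with the new definition $H(\bet)=g(d_1\bet)g(d_2\bet)$, orthogonality and a trivial application of the classical mean value $\int_0^1|g(\tet)|^2\d\tet\ll B$ (or Hua's inequality after Schwarz) give $\int_0^1|H(\bet)|^2\d\bet\ll B^{n+\eps}$, which for $n=2$ is of the shape $B^{2n-2+\eps}$. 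Then Cauchy's inequality together with Lemma~\ref{lemma6.5} yields
$$\Ups_i\le \Zet_i^{1/2}\Bigl(\sum_{v\in\grX}|R_2(v;\grM)|^2\Bigr)^{1/2}\ll (B^{2l-4-3\tau})^{1/2}(B^{n+\eps})^{1/2}\ll B^{l+n/2-2-3\tau/2+\eps}.$$
Recalling $(l,m,n)=(5,5,2)$, this is $\ll B^{l-1-3\tau/2+\eps}$, comfortably of size $B^{l-1-\tau}$.

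Finally, I would combine these estimates exactly as in Lemma~\ref{lemma4.3}, writing
\begin{align*}
N(B;\grM\times\grM)-N_0(B;\grM,\grM)&=\sum_{(u,v)\in\grX_1\cup\grX_2}\rho(u,v)R_1(u;\grM)R_2(v;\grM)\\
&\ll B^{m-3+\eps}(\Ups_1+\Ups_2)\ll B^{m-3+l-1-\tau+\eps}.
\end{align*}
Since $m-3+l-1=l+m-4=s-n-4=s-6$ for type D, the right-hand side is $\ll B^{s-6-\tau}=B^{6-\tau}$, which is the claimed bound. I expect the only point requiring care to be the bookkeeping of the exponent coming from $\int_0^1|H(\bet)|^2\d\bet$ under the revised definition of $H$, and checking that the loss $B^{-3\tau/2}$ from Lemma~\ref{lemma6.5} still beats $B^{-\tau}$; both are routine. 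The genuinely substantive content has already been isolated in Lemma~\ref{lemma6.3} (hence Lemma~\ref{lemma6.5}), so no further obstacle should arise here.
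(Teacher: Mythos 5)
Your proposal is correct and follows essentially the same path as the paper's own proof: bound $\int_0^1|H(\beta)|^2\,\mathrm d\beta\ll B^{2n-2+\varepsilon}$ via Schwarz and Hua's lemma, invoke Lemma~\ref{lemma6.5} for $\Zet_i$, deduce $\Ups_i\ll\Zet_i^{1/2}\bigl(\int_0^1|H|^2\bigr)^{1/2}$ by Cauchy, and multiply by $R_1(u;\grM)\ll B^{m-3+\varepsilon}$. The only cosmetic difference is your initial passing suggestion of bounding $\int_0^1|H|^2$ by the mean value $\int_0^1|g|^2\,\mathrm d\theta\ll B$ alone, which does not directly apply to the product $g(d_1\beta)g(d_2\beta)$; but you immediately flag the Schwarz--Hua route, which is what the paper uses, so there is no genuine gap.
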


\begin{proof} We adapt the argument of the proof of Lemma \ref{lemma4.3} to the present context. First, as a consequence of Hua's lemma (see \cite[Lemma 2.5]{Vau1997}) and Schwarz's inequality, one has
\begin{align}
\int_0^1|H(\bet)|^2\d\bet &\le \Bigl( \int_0^1|g(d_1\bet)|^4\d\bet\Bigr)^{1/2}\Bigl( \int_0^1|g(d_2\bet)|^4\d\bet\Bigr)^{1/2}\notag  \\
&\ll B^{2+\eps}=B^{2n-2+\eps}.\label{6.6}
\end{align}
Then, as in the proof of Lemma \ref{lemma4.3}, we deduce from Lemma \ref{lemma6.5} that
$$\Ups_i\le \Zet_i^{1/2}\Bigl( \int_0^1|H(\bet)|^2\d\bet \Bigr)^{1/2}\ll B^{l+n-3-3\tau/2+\eps},$$
and hence that
$$N_0(B;\grM,\grM)-N(B;\grM\times \grM)\ll B^{m-3+\eps}(\Ups_1+\Ups_2)\ll B^{s-6-\tau}.$$
The desired conclusion follows on recalling that $s=12$.
\end{proof}

\begin{lemma}\label{lemma6.7} For systems of type D, one has $N_0(B;\grm,[0,1))\ll B^{6-\tau/3}$.
\end{lemma}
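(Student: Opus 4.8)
The plan is to mimic the proof of Lemma \ref{lemma4.4}, replacing the mean square estimates for $H(\bet)$ on $[0,1)$ and for $G(\alp)$, $F(\alp,0)$ on $\grm$ by the bounds appropriate to the type D generating functions. First I would apply Cauchy's inequality to split
$$N_0(B;\grm,[0,1))\le V_1^{1/2}V_2^{1/2},$$
where $V_1=\sum_{(u,v)\in\grX_0}\rho(u,v)|R_1(u;\grm)|^2$ and $V_2=\sum_{(u,v)\in\grX_0}\rho(u,v)|R_2(v;[0,1))|^2$, exactly as before. Since we are summing over $\grX_0$, the conditions $\rho_1(u)\le T$ and $\rho_2(v)\le T$ with $T=B^{l-3+\tau}$ are in force, so via \eqref{3.10} we get $V_1\le B^{l-3+\tau}\sum_{u\in\grX}|R_1(u;\grm)|^2\le B^{l-3+\tau}\int_\grm|G(\alp)|^2\d\alp$ and $V_2\le B^{l-3+\tau}\int_0^1|H(\bet)|^2\d\bet$ after Bessel's inequality.

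The two inputs needed are therefore mean square bounds for $G$ on the minor arcs and for $H$ on the whole circle, in the new type D setup. For $H(\bet)=g(d_1\bet)g(d_2\bet)$ the estimate \eqref{6.6} already gives $\int_0^1|H(\bet)|^2\d\bet\ll B^{2+\eps}=B^{2n-2+\eps}$. For $G(\alp)=g(c_4\alp)g(c_5\alp)\prod_{j=1}^3h(c_j\alp)$ on $\grm$, I would first extract a minor arc Weyl saving from one of the $g$-factors using the modified Weyl inequality \eqref{2.11}, namely $\sup_{\alp\in\grm}|g(c_5\alp)|\ll B^{3/4+\eps}$, and then bound the remaining sixth power $|g(c_4\alp)^2h(c_1\alp)h(c_2\alp)h(c_3\alp)|^2$ — which by H\"older and orthogonality is controlled by the mixed fourth-moment of $g$ and sixth-moment of $h$ — appealing to Lemma \ref{lemma6.3}. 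Concretely, H\"older's inequality in the shape $\int|g(c_4\alp)^2\prod_j h(c_j\alp)^2|\d\alp\le\prod_j\left(\int|g(c_4\alp)^4h(c_j\alp)^6|\d\alp\right)^{1/3}$, combined with $\int_0^1|g^4h^6|\d\tet\ll B^{6+\eps}$ (the trivial-arc companion to Lemma \ref{lemma6.3}, or simply Lemma \ref{lemma6.3} with $\grm$ replaced by $[0,1)$ at the cost of $\eps$), yields $\int_\grm|G(\alp)|^2\d\alp\ll (B^{3/4+\eps})^2\cdot B^{6+\eps}\ll B^{2m-4-3\tau+\eps}$ once the genuine minor-arc gain from Lemma \ref{lemma6.3} is threaded through in place of the trivial bound on the $g$-factor carrying the saving; the upshot is $\int_\grm|G(\alp)|^2\d\alp\ll B^{2m-4-3\tau}$, mirroring the $B^{2l-4-3\tau}$ bound recorded in Lemma \ref{lemma6.5}.

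Assembling these, $V_1\ll B^{l-3+\tau}\cdot B^{2m-4-3\tau}=B^{l+2m-7-2\tau}$ and $V_2\ll B^{l-3+\tau}\cdot B^{2n-2+\eps}=B^{l+2n-5+\tau+\eps}$, so
$$N_0(B;\grm,[0,1))\ll V_1^{1/2}V_2^{1/2}\ll B^{l+m+n-6-\tau/2+\eps}=B^{s-6-\tau/2+\eps}\ll B^{6-\tau/3},$$
on recalling $s=12$ and absorbing $\eps$ into the $\tau/2$-to-$\tau/3$ slack. The main obstacle is getting the minor-arc mean square of $G$ to beat $B^{2m-4}=B^{6}$ by a genuine power of $B$: one cannot afford to lose the $h$-sixth-moment gain by merely taking a trivial bound on the Weyl factor, so the proof must be arranged so that Lemma \ref{lemma6.3} itself — not just \eqref{2.11} together with a trivial sixth-moment — supplies the decisive saving, exactly as the corresponding integral $\int_\grm|F(\alp,0)|^2\d\alp\ll B^{2l-4-3\tau}$ is obtained in Lemma \ref{lemma6.5} via H\"older's inequality from Lemma \ref{lemma6.3}. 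Once that point is handled, everything else is a routine repetition of the argument of Lemma \ref{lemma4.4}.
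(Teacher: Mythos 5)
Your plan coincides with the paper's proof of Lemma~\ref{lemma6.7}: Cauchy's inequality together with Bessel's inequality and the truncation $T=B^{l-3+\tau}$ reduce the problem to bounding $\int_\grm|G(\alp)|^2\d\alp$ and $\int_0^1|H(\bet)|^2\d\bet$, the latter via (\ref{6.6}) and the former via Lemma~\ref{lemma6.3} through H\"older's inequality, after which the exponent count $B^{l-3+\tau}(B^{2m-4-3\tau})^{1/2}(B^{2n-2+\eps})^{1/2}=B^{s-6-\tau/2+\eps/2}\ll B^{6-\tau/3}$ closes the argument.

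Your closing remark — that the $B^{-3\tau}$ saving must come from Lemma~\ref{lemma6.3} itself, as in Lemma~\ref{lemma6.5}, rather than from the Weyl sup on one $g$-factor plus a trivial sixth moment — is exactly the point, and it is worth recording the clean H\"older split that makes it work:
$$\int_\grm|G(\alp)|^2\d\alp\le\prod_{i=4}^5\prod_{j=1}^3\Bigl(\int_\grm|g(c_i\alp)^4h(c_j\alp)^6|\d\alp\Bigr)^{1/6}\ll B^{6-3\tau}=B^{2m-4-3\tau}.$$
By contrast, the inline formula $\int|g(c_4\alp)^2\prod_jh(c_j\alp)^2|\d\alp\le\prod_j\bigl(\int|g(c_4\alp)^4h(c_j\alp)^6|\d\alp\bigr)^{1/3}$ is not a valid instance of H\"older's inequality — equal exponents $3$ reproduce $g(c_4\alp)^2$, not $g(c_4\alp)^4$, on the right — and the detour you sketch (take $\sup_{\alp\in\grm}|g(c_5\alp)|^2\ll B^{3/2+\eps}$ and bound $\int_0^1|g(c_4\alp)^2\prod_jh(c_j\alp)^2|\d\alp\ll B^{5}$ trivially) delivers only $B^{13/2+\eps}$, confirming your own diagnosis that it falls short of $B^{6-3\tau}$.
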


\begin{proof} By adapting the argument of the proof of Lemma \ref{lemma4.4}, one finds that
$$N_0(B;\grm,[0,1))\ll B^{l-3+\tau}\Bigl(\int_\grm |G(\alp)|^2\d\alp \Bigr)^{1/2}\Bigl( \int_0^1|H(\bet)|^2\d\bet \Bigr)^{1/2}.$$
The first integral on the right hand side may be estimated by means of Lemma \ref{lemma6.3} via H\"older's inequality, and the second from (\ref{6.6}). Thus one obtains
$$N_0(B;\grm,[0,1))\ll B^{l-3+\tau}(B^{2m-4-3\tau})^{1/2}(B^{2n-2+\eps})^{1/2}\ll B^{s-6-\tau/3}.$$
The desired conclusion again follows on noting that $s=12$.
\end{proof}

\begin{lemma}\label{lemma6.8} For systems of type D, one has $N_0(B;\grM,\grm)\ll B^{6-\tau}$.
\end{lemma}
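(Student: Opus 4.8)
The plan is to mirror the proof of Lemma \ref{lemma4.5}, adapting it to the altered generating functions of systems of type D. First I would reduce to the comparison with $N(B;\grM\times\grm)$: following the opening of the proof of Lemma \ref{lemma4.5} verbatim, an application of Cauchy's inequality together with Lemma \ref{lemma6.5} and the estimate (\ref{6.6}) for $\int_0^1|H(\bet)|^2\d\bet$ yields
$$N(B;\grM\times \grm)-N_0(B;\grM,\grm)=\sum_{(u,v)\in \grX_1\cup \grX_2}\rho(u,v)R_1(u;\grM)R_2(v;\grm)\ll B^{m-3+\eps}(\Zet_1+\Zet_2)^{1/2}\Bigl(\int_0^1|H(\bet)|^2\d\bet\Bigr)^{1/2}\ll B^{s-6-\tau},$$
so it suffices to bound $N(B;\grM\times\grm)=\int_\grM\int_\grm F(\alp,\bet)G(\alp)H(\bet)\d\bet\d\alp$ directly.

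Next I would estimate the inner integral $\int_\grm |F(\alp,\bet)H(\bet)|\d\bet$ uniformly in $\alp$. Here $H(\bet)=g(d_1\bet)g(d_2\bet)$ and $F(\alp,\bet)=g(a_4\alp+b_4\bet)g(a_5\alp+b_5\bet)h(a_1\alp+b_1\bet)h(a_2\alp+b_2\bet)h(a_3\alp+b_3\bet)$, so the relevant product involves the sixth moment of $h$ and a fourth moment of $g$ built from the two $g$-factors in $F$ together with the two factors in $H$. The key input is Lemma \ref{lemma6.3}: by H\"older's inequality one extracts from $|g(a_4\alp+b_4\bet)g(a_5\alp+b_5\bet)g(d_1\bet)g(d_2\bet)|$ a quantity controlled by $\int_\grm|g(\cdot)^4 h(\cdot)^6|\d\bet\ll B^{6-3\tau}$ after a change of variables absorbing $\alp$ into the phase (the linear forms in $\bet$ have non-zero $\bet$-coefficients, so this is legitimate), together with a classical fourth moment bound for the remaining $g$-factors. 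Combining these, one obtains $\int_\grm|F(\alp,\bet)H(\bet)|\d\bet\ll B^{l+n-6-\tau}=B^{s-m-6-\tau}$, uniformly in $\alp$.

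Finally, I would substitute this into the representation for $N(B;\grM\times\grm)$ and apply Lemma \ref{lemma2.3}, which remains valid for $G(\alp)$ in the type-D setting as noted after Lemma \ref{lemma6.4}; this gives $N(B;\grM\times\grm)\ll B^{s-m-6-\tau}\int_\grM|G(\alp)|\d\alp\ll B^{s-m-6-\tau}B^{m-3+\eps}=B^{s-9-\tau+\eps}$, which is comfortably $O(B^{6-\tau})$ since $s=12$; indeed even the cruder bound $B^{s-6-\tau/2}$ suffices, and one concludes by reference to the comparison estimate above. The main obstacle will be organizing the H\"older split in the inner $\bet$-integral so that exactly a fourth power of a single $g$ and a sixth power of $h$ are paired as in Lemma \ref{lemma6.3}, with the surplus $g$-factors and the sixth moment of $h$ handled by classical bounds, while keeping track of the exponent bookkeeping; once the correct split is identified the remaining manipulations are routine, paralleling the proof of Lemma \ref{lemma4.5}.
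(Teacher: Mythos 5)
Your first step, comparing $N_0(B;\grM,\grm)$ with $N(B;\grM\times\grm)$ via Lemma \ref{lemma6.5} and (\ref{6.6}), agrees with the paper and is fine. The second step has a genuine gap. You propose to bound $\int_\grm|F(\alp,\bet)H(\bet)|\d\bet$ by applying Lemma \ref{lemma6.3} after a H\"older split, but no such split exists. The integrand contains four $g$-factors (namely $g(a_4\alp+b_4\bet)$, $g(a_5\alp+b_5\bet)$, $g(d_1\bet)$, $g(d_2\bet)$) and three $h$-factors, whose seven linear arguments in $\bet$ are all distinct. Lemma \ref{lemma6.3} estimates $\int_\grm|g(a\tet)^4h(b\tet)^6|\d\tet$, so to deploy it through H\"older you would need to raise each $g$-factor to the fourth power and each $h$-factor to the sixth, but then the exponents fail to balance: $4\cdot\tfrac14+3\cdot\tfrac16=\tfrac32\ne1$. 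There is no way to pair four $g$'s with only three $h$'s in the $4{:}6$ proportion the lemma requires, whatever one does with classical moments for ``the remaining factors''; and in any case the arguments of the $g$'s and $h$'s do not coincide, so one cannot place a product of distinct factors into the single-variable form required by Lemma \ref{lemma6.3}. Your claimed bound $\int_\grm|F(\alp,\bet)H(\bet)|\d\bet\ll B^{l+n-6-\tau}=B^{1-\tau}$ is therefore unsupported --- and indeed much stronger than what the paper actually obtains.

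The paper instead uses the conditional sixth-moment estimate of Lemma \ref{lemma6.2}, namely $\int_0^1|f(\tet)|^6\d\tet\ll B^{3+\eps}$. One extracts $\sup_{\bet\in\grm}|g(d_1\bet)|\ll B^{3/4+\eps}$ by the modified Weyl inequality (\ref{2.11}), then applies H\"older to the remaining six factors, each raised to the sixth power; by considering the underlying Diophantine equations, each of these sixth moments (whether of $g$, of $h$, or of a factor with $\alp$ shifted into the phase) is dominated by $\int_0^1|f|^6\ll B^{3+\eps}$, and now the exponents balance since $6\cdot\tfrac16=1$. This yields $\int_\grm F(\alp,\bet)H(\bet)\d\bet\ll B^{3/4+\eps}\cdot B^{3+\eps}$, and integrating against $|G(\alp)|$ over $\grM$ via Lemma \ref{lemma2.3} gives $N(B;\grM\times\grm)\ll B^{23/4+\eps}$, comfortably within $B^{6-\tau}$. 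In short, the tool you want in this step is Lemma \ref{lemma6.2}, not Lemma \ref{lemma6.3}.
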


\begin{proof} In the first step, by adapting the argument of the proof of Lemma \ref{lemma4.5}, we deduce from (\ref{6.6}) and Lemma \ref{lemma6.5} that
\begin{align}
N(B;\grM\times \grm)-N_0(B;\grM,\grm)&\ll B^{m-3+\eps}(\Zet_1+\Zet_2)^{1/2}\Bigl( \int_0^1|H(\bet)|^2\d\bet\Bigr)^{1/2}\notag \\
&\ll B^{m-3+\eps}(B^{2l-4-3\tau})^{1/2}(B^{2n-2+\eps})^{1/2}.\label{6.7}
\end{align}
We next estimate $N(B;\grM\times \grm)$, observing that a consideration of the underlying Diophantine equations in combination with H\"older's inequality delivers the bound
\begin{align*}
\int_\grm &F(\alp,\bet)H(\bet)\d\bet\\
&\ll \Bigl( \sup_{\bet \in \grm}|g(d_1\bet)|\Bigr) \Bigl( \int_0^1|f(d_2\bet)|^6\d\bet\Bigr)^{1/6}\prod_{i=1}^5\Bigl( \int_0^1|f(a_i\alp+b_i\bet)|^6\d\bet\Bigr)^{1/6}.
\end{align*}
We therefore deduce from Lemma \ref{lemma6.2} together with Weyl's inequality (see (\ref{2.11}) above) that
$$\int_\grm F(\alp,\bet)H(\bet)\d\bet \ll B^{3/4+\eps}(B^{3+\eps}).$$
Consequently, in view of Lemma \ref{lemma2.3}, we derive the upper bound
\begin{align*}
N(B;\grM\times \grm)&=\int_\grM G(\alp)\int_\grm F(\alp,\bet)H(\bet)\d\bet\d\alp \\
&\ll B^{15/4+\eps}\int_\grM |G(\alp)|\d\alp \ll B^{15/4+\eps}(B^{2+\eps}).
\end{align*}
On recalling (\ref{6.7}), we conclude that
$$N_0(B;\grM,\grm)\ll B^{s-6-\tau}+B^{6-\tau}\ll B^{6-\tau},$$
and the proof of the lemma is complete.
\end{proof}

The treatment of systems of type D is now completed just as in the analogous argument for systems of type B in \S4. By combining the conclusions of Lemmata \ref{lemma6.4}, \ref{lemma6.6}, \ref{lemma6.7} and \ref{lemma6.8}, we confirm by means of Lemma \ref{lemma2.1} that
$$N(B)\ge N(B;\grN)+O(B^6\calL^{-1})\gg B^6.$$
This completes the proof of Theorem \ref{theorem1.1} for systems of type D, subject to the validity of HRH.

\section{The anticipated asymptotic formula: preliminaries} In this section, we turn to the proof that $N(B)$ is asymptotically at least as large as anticipated. Consider a system of the shape (\ref{1.1}) subject to the hypotheses of the statement of Theorem \ref{theorem1.2}. The conclusion of this theorem is already supplied by \cite[Theorem 1.1]{BW2011} when $s\ge 14$, so there is no loss of generality in restricting to the situations with $s=12$ and $13$. A moment of thought reveals that the triple $(l,m,n)$ associated with the system (\ref{1.1}) must take one of three shapes, namely:
\begin{enumerate}
\item[(E)] $(4,4,4)$, $(4,5,4)$ or $(5,4,4)$,
\item[(F)] $(3,5,5)$,
\item[(G)] $(5,5,3)$.
\end{enumerate}
We now modify the argument of \S\S2--6 in order to accommodate the modest changes involved in obtaining an asymptotic formula for $N(B)$. We take the expedient approach of adopting all notation from those sections without further comment, unless noted otherwise, and thereby economise on space.\par

We begin by modifying the definitions of the generating functions defined in (\ref{2.2}) and (\ref{2.3}) by putting
$$F(\alp,\bet)=\prod_{i=1}^lf(a_i\alp+b_i\bet),\quad G(\alp)=\prod_{j=1}^mf(c_j\alp)\quad \text{and}\quad H(\bet)=\prod_{k=1}^nf(d_k\bet).$$
With the definition (\ref{2.4}) unchanged, one finds by orthogonality that
\begin{equation}\label{7.1}
N(B)=N(B;[0,1)^2).
\end{equation}

\begin{lemma}\label{lemma7.1} For systems of type E, F and G, one has
$$N(B;\grN) =\calC B^{s-6}+O(B^{s-6}\calL^{-1}).$$
\end{lemma}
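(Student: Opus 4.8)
The plan is to evaluate the contribution of the narrow major arcs $\grN$ to the genuine counting function $N(B)$ — now built from classical Weyl sums $f(\tet)$ rather than the restricted sums $h(\tet)$ and $g(\tet)$ — by the standard singular series/singular integral analysis of the circle method, and to identify the resulting main term with $\calC B^{s-6}$. The key point is that, since all generating functions are now genuine Weyl sums, the major arc approximation is the classical one: on $\grN(q,a,b)$ one has $f(c\alp)\approx q^{-1}S(q,ca)v(\alp-a/q)$ and likewise for the other factors, where $S(q,\cdot)$ is the cubic Gauss sum and $v(\tet)=\int_0^B e(\tet\gam^3)\d\gam$ (or its symmetric analogue $w$). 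This is exactly the situation treated in \cite[\S7]{BW2007a}, except that there the box-restricted sums forced one to carry the truncation factor $\nu$; here there is no such complication.

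First I would record, via \cite[Lemma 2.5]{Vau1997} or the pruning lemmas already invoked in \S6, the pointwise approximation to $f(c\alp)$ on a typical box $\grN(q,a,b)$, together with the error term arising from replacing the sum by the archimedean integral; summing the errors over the $O(Q^3)$ boxes and using $Q=L^{1/100}$ shows that the total error is $O(B^{s-6}L^{-1/4})$, say, which is absorbed into the claimed $O(B^{s-6}\calL^{-1})$ after a harmless enlargement. This reduces matters to showing
\[
C\,\grS(Q)\,\grJ(Q)=\calC B^{s-6}+O(B^{s-6}\calL^{-1}),
\]
where $\grS(X)=\sum_{q\le X}A(q)$ and $\grJ(X)=\iint_{\calB(X)}V(\xi,\zet)\d\xi\d\zet$ are the truncated singular series and singular integral in the notation of the proof of Lemma \ref{lemma2.1}, and $C$ is the same positive constant appearing in (\ref{2.8}). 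The bound $A(q)\ll q^{\eps-4/3}$ established there shows that $\grS=\lim_{X\to\infty}\grS(X)$ converges absolutely with $\grS-\grS(X)\ll X^{-1/4}$, so $\grS(Q)=\grS+O(Q^{-1/4})$; and a routine estimation of the tail of the singular integral, using the decay of $v(\tet)$ and $w(\tet)$ for $|\tet|$ large (exactly as in \cite[Lemma 13]{BW2007a}), gives $\grJ(Q)=B^{s-6}(\grJ_\infty+O(Q^{-\del}))$ for some $\del>0$, where $\grJ_\infty$ is the complete singular integral, rescaled to $B=1$.

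It then remains to verify the product identity $C\,\grS\,\grJ_\infty=\calC$, i.e. that the arithmetically defined constant equals $v_\infty\prod_p v_p$. This is the familiar reconciliation of the circle-method constant with the product of local densities: one checks that $C\grJ_\infty=v_\infty$ (the singular integral computes the real density, the normalisation constant $C$ accounting for the Jacobian in passing from the $(\xi,\zet)$ parametrisation to the manifold measure on $[-1,1]^s$), and that $\grS=\prod_p\sig_p$ with $\sig_p=\lim_{h\to\infty}p^{h(2-s)}M(p^h)=v_p$, the Euler product converging by the bound $A(q)\ll q^{\eps-4/3}$ which gives $\sum_p|\sig_p-1|<\infty$. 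All of this is standard (see \cite[\S7]{BW2007a} and the references therein) and I would simply cite it rather than reproduce it. I expect the only genuinely delicate point to be bookkeeping: ensuring that, across the $O(Q^3)$ narrow boxes, the cumulative error from the Weyl-sum approximation really is $o(B^{s-6})$ with room to spare — this is where the exponent $3/4$ in the definition of $Q$ (via $Q=L^{1/100}$, small) and the strength of the pointwise approximation must be weighed against each other, but since $Q$ is only a power of $\log B$ this is comfortable.
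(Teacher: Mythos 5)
Your overall strategy coincides with the paper's: you apply the machinery of the proof of Lemma \ref{lemma2.1}, observe that classical Weyl sums $f$ are if anything easier to handle on major arcs than the restricted sums $g$, $h$, extract a singular series times singular integral, and identify the product with $\calC$. This is exactly what the paper does, citing also \cite[Lemma 3.1]{BW2011}.

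However, there is a genuine conceptual error in your treatment of the constant $C$. You carry over the constant $C$ from (\ref{2.8}) and try to reconcile $C\,\grS\,\grJ_\infty=\calC$ by interpreting $C$ as a Jacobian normalisation ``in passing from the $(\xi,\zet)$ parametrisation to the manifold measure.'' This is not what $C$ is. In (\ref{2.8}) the constant $C$ arises from the density of $B^\eta$-smooth integers: the major-arc approximation to $h(\tet)=\sum_{x\in\calA_\eta^*(B)}e(\tet x^3)$ carries a positive density constant depending on $\eta$ that does not appear for the classical Weyl sum $f(\tet)=\sum_{|x|\le B}e(\tet x^3)$. Once every factor is $f$, there is no such constant, and the major-arc expansion yields
\[
N(B;\grN)=\grS\,\grJ(B)+O(B^{s-6}\calL^{-1})
\]
with no extraneous factor, where now
$\grJ(B)=\iint_{\dbR^2}\prod_i w(a_i\xi+b_i\zet)\prod_j w(c_j\xi)\prod_k w(d_k\zet)\,\d\xi\,\d\zet$
uses $w$ for \emph{every} factor (you implicitly keep the $v$'s from the Lemma \ref{lemma2.1} definition of $V$, which is also a small slip, as the truncation to $(\nu B,B]$ was tied to $g$ and is likewise absent here). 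The singular integral so defined already computes the real density: $\grJ(B)=v_\infty B^{s-6}$ with no further Jacobian, and $\grS=\prod_p v_p$, giving $\calC=\grS\grJ$ outright. Your truncation estimates ($\grS-\grS(Q)\ll Q^{-1/4}$, tail of the singular integral, summing over $O(Q^3)$ boxes) are fine; it is only the bookkeeping of $C$ and the identity you try to verify at the end that need to be corrected.
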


\begin{proof} We may apply the argument of the proof of Lemma \ref{lemma2.1}. The presence of additional classical Weyl sums, rather than their smooth brethren, ensures that the analysis underlying the proof of the latter lemma not only remains valid, but proceeds in a manner more pedestrian than in \S2 (see also the proof of \cite[Lemma 3.1]{BW2011}). Thus one obtains the asymptotic formula
$$N(B;\grN)=\grS\grJ(B)+O(B^{s-6}\calL^{-1}),$$
where $\grS=\sum\limits_{q=1}^\infty A(q)$, with $A(q)$ defined as in (\ref{2.7}), and
$$\grJ(B)=\iint_{\dbR^2} \prod_{i=1}^lw(a_i\xi+b_i\zet)\prod_{j=1}^mw(c_j\xi)\prod_{k=1}^n w(d_k\zet)\d\xi \d\zet .$$
A conventional analysis akin to that described in \S2 reveals that, provided the system (\ref{1.1}) admits non-singular $p$-adic solutions for each prime number $p$, then $1\ll \grS\ll 1$. Moreover, for a suitable positive constant $\grJ$, one finds that $\grJ(B)=\grJ B^{s-6}$. Here, in the notation introduced in the preamble to the statement of Theorem \ref{theorem1.2}, one has $\grJ=v_\infty$ and $\grS=\prod_pv_p$ (compare the treatment of \cite{BW2011}). This confirms the asymptotic formula claimed in the statement of the lemma, with $\calC=\grS \grJ$.
\end{proof}

The exponential sum $g(\tet)$, avoiding as it does summands with arguments close to $0$, has slightly better behaviour on major arcs than does $f(\tet)$. We therefore record surrogates for Lemmata \ref{lemma2.2} and \ref{lemma2.3} of use in later sections.

\begin{lemma}\label{lemma7.2} Let $a$ be a fixed non-zero integer, and let $b$ be a non-zero rational number. Then when $\del >0$, one has
$$\sup_{\lam \in \dbR}\int_\grM |f(a\tet)^{3+\del}f(b\tet+\lam)^2|\d\tet \ll B^{2+\del}$$
and
$$\int_\grM |f(a\tet)|^{4+\del}\d\tet \ll B^{1+\del}.$$
\end{lemma}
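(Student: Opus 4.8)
\textbf{Proof proposal for Lemma \ref{lemma7.2}.}

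The plan is to reduce both estimates to known mean-value bounds for the classical cubic Weyl sum on the major arcs $\grM$, and in particular to the estimates already recorded in Lemma \ref{lemma2.2} and its antecedents in \cite{BW2007a}. The point is that the smooth sum $h(\tet)$ and the restricted sum $g(\tet)$ appearing in Lemma \ref{lemma2.2} may be replaced throughout by the full Weyl sum $f(\tet)$ without loss on the major arcs, since the standard major-arc approximation $f(\tet)\approx q^{-1}S(q,a)w(\tet-a/q)$, valid for $\tet\in\grM(q,a)$, holds for $f$ in exactly the same form as for $g$ (indeed in slightly sharper form, since no dissection of the interval of summation is needed). Thus I would first invoke the pointwise approximation on $\grM$, reducing each integral over $\grM$ to a sum over $0\le a\le q\le B^{3/4}$ of the corresponding integral against the archimedean factor $w$ and the Gauss-sum factor $q^{-1}S(q,a)$, whose sixth power is summable in $q$ by the classical bound $q^{-1}|S(q,a)|\ll q^{-1/3+\eps}$ together with $(a,q)=1$.

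For the second estimate, $\int_\grM |f(a\tet)|^{4+\del}\d\tet\ll B^{1+\del}$, I would follow the argument of \cite[Lemma 9]{BW2007a} (the same argument that yields (\ref{2.10}) with $g$ in place of $f$), noting that the only property of $g$ used there is the major-arc approximation and the trivial bound $g(\tet)\ll B$; both hold verbatim for $f$. The exponent $1+\del$ arises because the fourth moment of $w(a\tet)$ over a box of length $\asymp B^{-3}$ about a rational with denominator $q$ contributes $\asymp B^{-3}\cdot (B^{3}/q)^{?}$ after the change of variable scaling out $B$; more precisely, one uses $\int_{\dbR}|w(\gam)|^{4}\d\gam\ll B$ and $\int_{\dbR}|w(\gam)|^{4+\del}\d\gam\ll B^{1+\del}$, and the contribution of the denominators sums to $O(1)$ by the sixth-power Gauss sum bound. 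For the first estimate I would argue similarly, applying Schwarz's or H\"older's inequality exactly as in the proof of Lemma \ref{lemma2.3}: the factor $|f(a\tet)|^{3+\del}$ is controlled by the $(4+\del)$-th moment just established together with a trivial bound $f(\tet)\ll B$ on one power, while the two factors $|f(b\tet+\lam)|^{2}$ are handled by the second-moment bound $\sup_{\lam}\int_\grM|f(b\tet+\lam)|^{2}\d\tet\ll B^{1+\eps}$, which is the analogue of (\ref{2.9}) and follows from the same Gauss-sum-and-$w$ computation (see also \cite[Lemma 3.4]{ARTS1}). Balancing the exponents gives the stated $B^{2+\del}$.

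The only genuine point requiring care — and the step I expect to be the main obstacle — is bookkeeping the $\del$-dependence: one must check that the interplay between the trivial bound $f\ll B$, the summation over Gauss sums, and the archimedean integral $\int|w|^{2+\del}$ or $\int|w|^{4+\del}$ produces exactly the exponents $2+\del$ and $1+\del$ with no spurious $\eps$ or $\calL$ losses, so that these bounds are clean enough to feed into the asymptotic analysis of \S\S8--10. This is precisely the improvement afforded by $f$ over $g$ and $h$: since $f$ admits a genuine (non-truncated) major-arc approximation, the archimedean integrals are the full integrals over $\dbR$ rather than over a box, and these are finite for the relevant exponents, so one recovers the sharp power of $B$ without the $\eps$ present in (\ref{2.9}). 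I would therefore carry out the computation directly from the approximation $f(\tet)=q^{-1}S(q,a)w(\tet-a/q)+O((1+B^{3}|\tet-a/q|)^{?})$, rather than quoting (\ref{2.10}), in order to keep the exponents sharp; this is routine but needs to be done explicitly.
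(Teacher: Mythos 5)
Your overall approach -- bounding $f$ on $\grM$ by the major-arc approximation $f(\tet)\approx q^{-1}S(q,a)w(\tet-a/q)$, summing the Gauss sum factor in $q$ (which converges by $q^{-1}|S(q,a)|\ll q^{-1/3}$ once the total power of $f$ strictly exceeds $4$), and integrating the archimedean factor $w$ -- is exactly what the paper's two-sentence proof is gesturing at when it cites the argument of \cite[Lemma 9]{BW2007a} for the first estimate and Vaughan's \cite[\S\S4.3--4.4]{Vau1997} for the second. Your closing remark that you would ``carry out the computation directly from the approximation'' is indeed the right move, and you are also right that the reason these bounds are $\eps$-free (unlike (\ref{2.9})) is that the extra $\del$ in the exponent buys convergence in both the $q$-sum and the $\beta$-integral.

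The middle part of your proposal, however, does not work as described, and you should not rely on it. You propose to handle the factor $|f(b\tet+\lam)|^2$ via the \emph{second}-moment bound $\sup_\lam\int_\grM|f(b\tet+\lam)|^2\d\tet\ll B$ and to control $|f(a\tet)|^{3+\del}$ by the $(4+\del)$-th moment together with a trivial power of $B$. Following that through with H\"older, the exponents do not close: to pair the second moment ($q=1$) with a separated $f(a\tet)$-factor forces $p=\infty$ and a trivial $B^{3+\del}$ bound, giving $B^{4+\del}$; and the alternative splitting $|f(a\tet)|^{3+\del}\le B^{1+\del}|f(a\tet)|^2$ followed by an $f$-analogue of (\ref{2.9}) leaves an $\eps$ that cannot be absorbed into $\del$, because the $\del$ already appears with the same sign in the target exponent. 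A H\"older reduction \emph{is} available, but it needs an eighth-moment input rather than a second-moment one: with $p=4/3$, $q=4$ one gets
$$\int_\grM |f(a\tet)|^{3+\del}|f(b\tet+\lam)|^2\d\tet\le\Bigl(\int_\grM|f(a\tet)|^{4+4\del/3}\d\tet\Bigr)^{3/4}\Bigl(\int_\grM|f(b\tet+\lam)|^{8}\d\tet\Bigr)^{1/4},$$
and then the second estimate of the lemma gives $(B^{1+4\del/3})^{3/4}=B^{3/4+\del}$ while Vaughan's $\eps$-free $\int_0^1|f|^8\d\gam\ll B^5$ (see \cite[Theorem 2]{Vau1986}) gives $(B^5)^{1/4}=B^{5/4}$, so the product is exactly $B^{2+\del}$. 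So your idea of deducing the first bound from the second is salvageable, but only via Hua's eighth moment, not via a second-moment bound; as stated your bookkeeping would fail. Since you ultimately fall back on the direct major-arc computation anyway, this is a detour rather than a fatal flaw, but the discrepancy is worth noting.
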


\begin{proof} The first estimate follows via the argument of the proof of \cite[Lemma 9]{BW2007a}, and the second from the methods of \cite[\S\S4.3 and 4.4]{Vau1997}.
\end{proof}

\begin{lemma}\label{lemma7.3} Suppose that the integer $w$ is non-zero. Then for $m\ge 4$, one has
$$R_1(w;\grM)\ll B^{m-3}L^\eps\quad \text{and}\quad R_1(0;\grM)\ll B^{m-3+\eps}.$$
Similarly, when $n\ge 4$, one has
$$R_2(w;\grM)\ll B^{n-3}L^\eps\quad \text{and}\quad R_2(0;\grM)\ll B^{n-3+\eps}.$$
Finally, when $m\ge 5$ one has $R_1(w;\grM)\ll B^{m-3}$ for all integers $w$.
\end{lemma}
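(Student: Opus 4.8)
The plan is to deduce Lemma \ref{lemma7.3} from Lemma \ref{lemma7.2} together with an elementary pointwise bound for the major-arc behaviour of a single Weyl sum. Recall that $R_1(w;\grM)=\int_\grM G(\alp)e(\alp w)\d\alp$ with $G(\alp)=\prod_{j=1}^m f(c_j\alp)$, and that on $\grM$ each factor $f(c_j\alp)$ satisfies the standard estimate $f(c_j\alp)\ll B(1+B^3|c_j\alp-a/q|)^{-1/3}(q+B^3|q c_j\alp-a|)^{-1/3}\cdot q^{\eps}$, or more crudely $f(c_j\alp)\ll B^{1+\eps}(q+B^3|q\alp-a|)^{-1/3}$ uniformly for $\alp\in\grM(q,a)$ (see \cite[\S4.2 and Theorem 4.1]{Vau1997}); in particular $\sup_{\alp\in\grM}|f(c_j\alp)|\ll B^{1+\eps}$.

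First I would treat the case $m\ge 4$. Extract two factors, write
$$|G(\alp)|\le |f(c_1\alp)f(c_2\alp)|\cdot |f(c_3\alp)|^2\cdot\!\!\prod_{j=5}^m|f(c_j\alp)|\le B^{m-4}\cdot |f(c_3\alp)|^2\cdot |f(c_1\alp)f(c_2\alp)|,$$
using the trivial bound $|f(c_j\alp)|\le B$ for $5\le j\le m$ (and interpreting the product as empty when $m=4$). Thus
$$R_1(w;\grM)\ll B^{m-4}\int_\grM |f(c_3\alp)|^2|f(c_1\alp)f(c_2\alp)|\,\d\alp\ll B^{m-4}\Bigl(\int_\grM|f(c_3\alp)|^4\,\d\alp\Bigr)^{1/2}\Bigl(\int_\grM|f(c_1\alp)f(c_2\alp)|^2\,\d\alp\Bigr)^{1/2}$$
by Schwarz's inequality. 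The second estimate of Lemma \ref{lemma7.2} (with a small $\del>0$, absorbing $B^{\del}$ into $L^\eps$) handles $\int_\grM|f(c_3\alp)|^4\,\d\alp\ll B^{1+\eps}$, while the mean square $\int_\grM|f(c_1\alp)f(c_2\alp)|^2\,\d\alp$ is $\ll B^{1+\eps}$ by the usual treatment of a quaternary major-arc integral (alternatively, apply Schwarz once more and invoke Lemma \ref{lemma7.2}). This yields $R_1(w;\grM)\ll B^{m-3+\eps}$. To sharpen the $\eps$ to $L^\eps$ when $w\ne 0$, one inserts the standard gain coming from integrating the singular series and singular integral against the character $e(\alp w)$: with $w\ne 0$ the major-arc contribution carries an extra factor $(|w|B^{-3})^{\text{something}}$, or more simply one notes that the piece of the integral over arcs $\grM(q,a;X)$ with $X$ large is negligible and on the remaining ``genuine'' part the standard completion of the singular series and integral converges, producing the bound $B^{m-3}L^\eps$ in place of $B^{m-3+\eps}$; this is exactly the refinement recorded in \cite[\S\S4.3 and 4.4]{Vau1997}. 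The case $n\ge 4$ is identical with $H(\bet)$ and $d_k$ in place of $G(\alp)$ and $c_j$.

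For the final assertion, when $m\ge 5$ I would peel off one extra factor. Write $|G(\alp)|\le |f(c_1\alp)|^{3+\del}|f(c_2\alp)|^2\cdot B^{m-5-\del}$ for a suitably small $\del>0$ (legitimate since $m\ge 5$, using $|f|\le B$ on the $m-5$ surplus factors and on the fractional surplus of the first factor), so that
$$R_1(w;\grM)\ll B^{m-5-\del}\int_\grM |f(c_1\alp)|^{3+\del}|f(c_2\alp)|^2\,\d\alp\ll B^{m-5-\del}\cdot B^{2+\del}=B^{m-3}$$
by the first estimate of Lemma \ref{lemma7.2} (taken with $\lam=0$). This bound is uniform in $\alp$-independent data and holds for \emph{all} integers $w$, since the estimate of Lemma \ref{lemma7.2} is already uniform in the shift $\lam$ and in particular does not require $w\ne 0$.

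The main obstacle is bookkeeping rather than conceptual: one must be careful that the exponents in the Hölder/Schwarz splittings match the hypotheses $m\ge 4$ versus $m\ge 5$ exactly, and that the passage from $B^{m-3+\eps}$ to the cleaner $B^{m-3}L^\eps$ (for $w\ne 0$) genuinely follows from the convergence of the completed singular series and singular integral as in \cite[\S\S4.3 and 4.4]{Vau1997}; this is the only point where the distinction between $w=0$ and $w\ne 0$ is used, and it is precisely the extra decay in $|w|$ supplied by the oscillatory factor $e(\alp w)$ that permits summing the arithmetic and archimedean factors without an $\eps$-loss.
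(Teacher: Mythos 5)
The paper's own proof is essentially a pointer: the estimates are claimed to follow from the methods of Vaughan's book \cite[\S\S4.3--4.4]{Vau1997} and from Kawada \cite[(1.3), (1.4)]{Kaw1996}, i.e.\ from a genuine major-arc analysis in which $f$ is replaced on $\grM(q,a)$ by its approximant $q^{-1}S(q,\cdot)w(\cdot)$ and the resulting truncated singular series and singular integral are estimated directly. Your proposal instead tries to reconstruct the bounds purely from H\"older/Schwarz combined with Lemma~\ref{lemma7.2}, and this works for two of the three assertions but not for the delicate one.

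Where it works: your Schwarz argument does deliver $R_1(0;\grM)\ll B^{m-3+\eps}$ for $m\ge 4$ (after fixing the bookkeeping: you cannot write $|G(\alp)|\le B^{m-4}|f(c_3\alp)|^2|f(c_1\alp)f(c_2\alp)|$ pointwise since that replaces $|f(c_4\alp)|$ by $|f(c_3\alp)|$; one should instead apply Schwarz to the genuine product $\prod_{j=1}^4|f(c_j\alp)|$ and then use the second estimate of Lemma~\ref{lemma7.2} on each $\int_\grM|f(c_j\alp)|^4\,\d\alp$). Similarly, for $m\ge 5$ your \emph{idea} is right, but the displayed pointwise inequality $|G(\alp)|\le|f(c_1\alp)|^{3+\del}|f(c_2\alp)|^2B^{m-5-\del}$ is not valid, since it silently replaces $|f(c_3\alp)|,|f(c_4\alp)|,|f(c_5\alp)|$ by powers of $|f(c_1\alp)|$ and $|f(c_2\alp)|$. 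The correct step is an application of H\"older to the integral: bound $|f(c_j\alp)|\le B$ for $j>5$, then $\int_\grM\prod_{j=1}^5|f(c_j\alp)|\,\d\alp\le\prod_{j=1}^5\bigl(\int_\grM|f(c_j\alp)|^5\,\d\alp\bigr)^{1/5}\ll B^2$ by the second estimate of Lemma~\ref{lemma7.2} with $\del=1$.

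The genuine gap is in the bound $R_1(w;\grM)\ll B^{m-3}L^\eps$ for $w\ne 0$ and $m\ge 4$. Once you take absolute values inside $\int_\grM G(\alp)e(\alp w)\,\d\alp$, all sensitivity to $w$ disappears, and no amount of H\"older/Schwarz will recover a factor $L^\eps$ in place of $B^\eps$. The asserted gain is emphatically \emph{not} an ``extra decay in $|w|$'' supplied by the oscillatory factor $e(\alp w)$: the singular integral for $w\neq 0$ in the relevant range is still $\asymp B^{m-3}$, with no power saving in $|w|$. What actually happens, and what Vaughan's \S\S4.3--4.4 and Kawada's paper carry out, is that one approximates each $f(c_j\alp)$ on $\grM(q,a)$ by $q^{-1}S(q,c_ja)w(c_j(\alp-a/q))$ plus error, whereupon $R_1(w;\grM)$ becomes a truncated singular series times a singular integral plus manageable error terms; the singular series for representing $-w\ne 0$ by four cubes is then shown (using the multiplicative structure of $A(q,w)$ and explicit evaluation of the $p$-adic densities) to be $\ll L^\eps$, whereas at $w=0$ one only gets $B^\eps$. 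This is precisely why the lemma states two different bounds depending on whether $w=0$; the dichotomy comes from the arithmetic of the singular series, not from the archimedean side. Your proposal correctly cites the right references but mischaracterises the mechanism, and presents the $L^\eps$ bound as a ``refinement'' of the Schwarz argument when in fact it requires abandoning the Schwarz argument entirely for a major-arc expansion.
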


\begin{proof} On recalling the definitions of $R_1(w;\grM)$ and $R_2(w;\grM)$ from the preamble to Lemma \ref{lemma3.3}, these estimates follow from the methods of \cite[\S\S4.3 and 4.4]{Vau1997}. We note that a precise form of these upper bounds may be derived from \cite[equations (1.3) and (1.4)]{Kaw1996}.
\end{proof}

We finish by recording some mean value estimates for Weyl sums. In this context, when $r\ge t\ge 3$ and $\lam_1,\ldots,\lam_r$ are fixed non-zero integers, we write
$$D_t(\tet)=\prod_{i=1}^tf(\lam_i\tet).$$

\begin{lemma}\label{lemma7.4} One has
\begin{align*}
\int_0^1|D_t(\tet)|^2\d\tet &\ll_\bflam B^{2t-5/2}L^{\eps-3/2}\quad (3\le t\le r),\\
\int_\grm |D_t(\tet)|^2\d\tet &\ll_\bflam B^{2t-3}L^{\eps-3}\quad (4\le t\le r),\\
\int_\grm |D_t(\tet)|^2\d\tet &\ll_\bflam B^{2t-7/2}L^{\eps-5/2}\quad (5\le t\le r).
\end{align*}
\end{lemma}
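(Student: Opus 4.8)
\textbf{Proof proposal for Lemma \ref{lemma7.4}.}

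The plan is to mirror the structure of the proof of Lemma \ref{lemma2.4}, but using classical Weyl sums $f(\lam_i\tet)$ in place of the smooth sums $g(\lam_i\tet)$ and $h(\lam_i\tet)$. The role played by the hard mean value estimate (\ref{2.1}) in Lemma \ref{lemma2.4} will here be played by Hua's lemma, which provides $\int_0^1|f(\tet)|^4\d\tet \ll B^{1+\eps}$, or more precisely by the sharper bound of the form $\int_0^1|f(\tet)|^4\d\tet \ll B^{1+\eps}$ with the explicit logarithmic refinement $\int_0^1|f(\tet)|^4\d\tet \ll B L^{\eps-3}$ available from the methods of \cite[\S\S4.3 and 4.4]{Vau1997}; it is this precise form of the fourth moment, together with the trivial estimate $|f(\lam_i\tet)|\le 2B+1 \ll B$, that I would feed into Schwarz's inequality. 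Applying Schwarz's inequality to $D_t(\tet)=\prod_{i=1}^t f(\lam_i\tet)$ in the form $|D_t(\tet)|^2 \le B^{2t-4}|f(\lam_1\tet)^2 f(\lam_2\tet)^2|$ and then Schwarz again, or directly splitting as $|f(\lam_1\tet)f(\lam_2\tet)|^2 \le \tfrac12(|f(\lam_1\tet)|^4+|f(\lam_2\tet)|^4)$, yields
$$\int_0^1|D_t(\tet)|^2\d\tet \ll B^{2t-4}\int_0^1|f(\tet)|^4\d\tet \ll B^{2t-3}L^{\eps-3}.$$
This already beats the first claimed bound $B^{2t-5/2}L^{\eps-3/2}$; but in fact the first assertion only asks for the weaker estimate, so one may simply quote the $B^{2t-5/2}L^{\eps-3/2}$ bound as a consequence. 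Here a little care is needed: the exponent $2t-5/2$ is what one obtains if one inserts a sixth moment bound $\int_0^1|f(\tet)|^6\d\tet \ll B^{7/2}L^{\eps-?}$ rather than Hua's fourth moment; since Lemma \ref{lemma6.2} gives the sixth moment only conditionally, I would instead derive the first assertion by interpolating Hua's lemma with the trivial bound, or more simply observe that $B^{2t-3}L^{\eps-3} \ll B^{2t-5/2}L^{\eps-3/2}$ holds trivially, so the stronger fourth-moment argument suffices for all three displayed estimates.

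For the minor arc estimates, I would proceed exactly as in the second half of the proof of Lemma \ref{lemma2.4}: insert the classical Weyl inequality in the form $\sup_{\tet\in\grm}|f(\lam_t\tet)|\ll B^{3/4+\eps}$ (see \cite[Lemma 2.4]{Vau1997}), pulling out one factor $|f(\lam_t\tet)|^2$ and bounding the remaining integral by the already-established full-circle mean value for $D_{t-1}$. This gives
$$\int_\grm |D_t(\tet)|^2\d\tet \le \Bigl(\sup_{\tet\in\grm}|f(\lam_t\tet)|\Bigr)^2 \int_0^1|D_{t-1}(\tet)|^2\d\tet \ll B^{3/2+\eps}\cdot B^{2(t-1)-3}L^{\eps-3} \ll B^{2t-7/2+\eps}L^{\eps-3}.$$
To obtain the sharper logarithmic savings claimed—namely $B^{2t-3}L^{\eps-3}$ for $t\ge 4$ and $B^{2t-7/2}L^{\eps-5/2}$ for $t\ge 5$—one cannot afford the $\eps$-loss from Weyl's inequality on all of $\grm$; instead I would prune more carefully, using the pruning machinery of \cite[\S\S4.3 and 4.4]{Vau1997} to show that on the major arcs $\grM$ the contribution of $\int_\grM |D_t(\tet)|^2\d\tet$ is of the permissible size, while on the genuine minor arcs $\grm$ one extracts $|f(\lam_t\tet)|^2$ with the sharper pointwise bound that holds away from $\grM$, together with an extra factor of $f(\lam_{t-1}\tet)$ when $t\ge 5$, reducing to the sixth- or eighth-moment-free estimates already in hand. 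The precise logarithmic exponents are then bookkeeping, tracking how many factors of $L$ are saved by the pruning; the cited references \cite{Vau1997} and \cite{Kaw1996} supply exactly these refinements.

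The main obstacle is not any single hard estimate—everything reduces to Hua's lemma and Weyl's inequality—but rather the careful accounting of the logarithmic powers. The statement asks for savings of a full power $L^{-3/2}$, $L^{-3}$, or $L^{-5/2}$ beyond the polynomial main term, and these are genuinely sharp in the sense that they match what is needed downstream in \S\S8--10 for the asymptotic formula; losing even a factor of $L^\eps$ in the wrong place would be fatal. Thus the delicate point is to run the pruning argument of \cite[\S\S4.3 and 4.4]{Vau1997} so as to obtain, for $\int_\grM|f(\tet)|^4\d\tet$, the bound $B L^{\eps-3}$ rather than merely $B^{1+\eps}$, and similarly to see that removing a factor $|f(\lam_t\tet)|^2$ on the minor arcs costs $B^{3/2}$ up to a bounded power of $L$ only. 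I would organise this by first establishing the fourth-moment bound with the stated logarithmic refinement as a standalone claim (citing \cite{Vau1997} and, for the explicit form, \cite{Kaw1996}), and then deriving all six displayed inequalities as formal consequences via Schwarz's inequality and the minor-arc supremum bound, exactly as in the proof of Lemma \ref{lemma2.4}.
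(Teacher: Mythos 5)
Your proposal contains a fatal error at its foundation: you claim $\int_0^1|f(\tet)|^4\d\tet\ll B^{1+\eps}$, refined to $\ll BL^{\eps-3}$, but this is off by a full power of $B$. For the cubic Weyl sum $f$, the number of solutions of $x_1^3+x_2^3=x_3^3+x_4^3$ in $[-B,B]^4$ is $\gg B^2$ by diagonal solutions alone, and the best unconditional bound is Hooley's theorem (cited as \cite[Theorem 1]{Hoo1980} and recorded in (\ref{7.2})) giving precisely $\int_0^1|f(\alp)|^4\d\alp\ll B^2$, with no logarithmic improvement. Hua's lemma for cubes gives $\int_0^1|f|^8\ll B^{5+\eps}$, not a fourth-moment bound of strength $B^{1+\eps}$. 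Consequently your line $\int_0^1|D_t|^2\ll B^{2t-4}\int_0^1|f|^4\ll B^{2t-3}L^{\eps-3}$ actually yields only $\ll B^{2t-2}$, weaker than the first claimed estimate, and the whole chain of ``formal consequences'' you build on it collapses.

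The paper's argument is structured around the eighth moment on minor arcs rather than the fourth moment on the full circle. The key inputs (recorded in (\ref{7.2})) are Hooley's fourth-moment bound $\int_0^1|f|^4\ll B^2$ together with Boklan's $\int_\grm|f|^8\d\alp\ll B^5L^{\eps-3}$, which your proof never invokes. The second displayed estimate then follows directly from Boklan via H\"older (pulling out $2t-8$ trivial factors), needing $t\ge 4$. The first estimate comes from Schwarz's inequality between the fourth and eighth moments to get $\int_\grm|f|^6\ll B^{7/2}L^{\eps-3/2}$, supplemented by the major-arc bound $\int_\grM|f|^6\ll B^3$ from Lemma \ref{lemma7.2}, and then H\"older; your suggestion that the fourth-moment route gives something even stronger is false. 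The third estimate requires not the standard Weyl bound $B^{3/4+\eps}$ (which loses an $\eps$ power of $B$) but the sharpened form $\sup_{\alp\in\grm}|f(\alp)|\ll B^{3/4}L^{1/4+\eps}$ available from \cite{Vau1986} via Hall--Tenenbaum \cite{HT1988}, applied on top of Boklan's estimate to produce $\int_\grm|f|^{10}\ll B^{13/2}L^{\eps-5/2}$, and then H\"older with $t\ge 5$. You correctly sense that the $B^\eps$ loss from ordinary Weyl would be fatal, but the remedy is not ad hoc pruning: it is the logarithmic refinement of Weyl's inequality, which your proposal does not identify.
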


\begin{proof} The upper bounds
\begin{equation}\label{7.2}
\int_0^1|f(\alp)|^4\d\alp \ll B^2\quad \text{and}\quad \int_\grm |f(\alp)|^8\d\alp \ll B^5L^{\eps-3},
\end{equation}
that follow, respectively, from Hooley \cite[Theorem 1]{Hoo1980} and Boklan \cite{Bok1993}, combine through the medium of Schwarz's inequality to give
$$\int_\grm |f(\alp)|^6\d\alp \le \Bigl( \int_0^1|f(\alp)|^4\d\alp \Bigr)^{1/2}\Bigl( \int_\grm |f(\alp)|^8\d\alp \Bigr)^{1/2}\ll B^{7/2}L^{\eps-3/2}.$$
In view of Lemma \ref{lemma7.2}, therefore, one finds that
$$\int_0^1|f(\alp)|^6\d\alp =\int_\grM|f(\alp)|^6\d\alp +\int_\grm|f(\alp)|^6\d\alp \ll B^3+B^{7/2}L^{\eps -3/2}.$$
By applying H\"older's inequality, when $3\le t\le r$ one obtains 
$$\int_0^1|D_t(\tet)|^2\d\tet \le g(0)^{2t-6}\int_0^1|f(\tet)|^6\d\tet \ll B^{2t-6}(B^{7/2}L^{\eps-3/2}).$$
This establishes the first estimate of the lemma.\par

The second estimate of the lemma follows from the second bound of (\ref{7.2}), following an application of H\"older's inequality in a manner similar to that above. For the third estimate of the lemma, we begin by recalling the sharpened version of Weyl's inequality
$$\sup_{\alp\in \grm}|f(\alp)|\ll B^{3/4}L^{1/4+\eps},$$
available from \cite{Vau1986} via the work of Hall and Tenenbaum \cite{HT1988}. This leads from the second estimate of (\ref{7.2}) to the upper bound
$$\int_\grm|f(\alp)|^{10}\d\alp \ll \Bigl( \sup_{\alp\in\grm}|f(\alp)|\Bigr)^2\int_\grm |f(\alp)|^8\d\alp \ll (B^{3/4}L^{1/4+\eps})^2B^5L^{\eps-3}.$$
The final estimate of the lemma therefore follows once again by employing H\"older's inequality.
\end{proof}

\section{Systems of type E} The treatment of systems of type E is similar to that of systems of type B in \S4, and we imitate the latter throughout this section.

\begin{lemma}\label{lemma8.1} For systems of type E, one has $N(B;\grK)\ll B^{s-6}\calL^{-1}$.
\end{lemma}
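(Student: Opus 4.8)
The plan is to mimic the proof of Lemma \ref{lemma4.1} for systems of type B, but with all smooth Weyl sums $g$ replaced by classical Weyl sums $f$, and with the improved major arc and minor arc estimates of \S7 (Lemmata \ref{lemma7.2}, \ref{lemma7.3} and \ref{lemma7.4}) in place of their smooth analogues from \S2. Recall that for systems of type E we have $l\ge 4$, $m\ge 4$, $n\ge 4$ and $s\in \{12,13\}$. First I would prune from $\grK$ to a subset $\grK_0$ on which $a_{l-1}\alp+b_{l-1}\bet$ and $a_l\alp+b_l\bet$ both lie in $\grM$. On the complementary part, the sharpened Weyl inequality $\sup_{\tet\in\grm}|f(\tet)|\ll B^{3/4}L^{1/4+\eps}$ (from \cite{Vau1986} via Hall--Tenenbaum, as recorded in \S7) controls one factor; after pairing the remaining factors of $F(\alp,\bet)H(\bet)$ via Schwarz's inequality and invoking the mean value bounds of Lemma \ref{lemma7.4}, together with the major arc estimate $\int_\grM|G(\alp)|\d\alp\ll B^{m-3+\eps}$ (a consequence of Lemma \ref{lemma7.3}, or of Hua's lemma and Lemma \ref{lemma7.2} when $m=4$), one obtains $N(B;\grK\setminus\grK_0)\ll B^{s-6-\tau}$ with room to spare.

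For the main contribution $N(B;\grK_0)$, I would set up an application of H\"older's inequality exactly paralleling the one in the proof of Lemma \ref{lemma4.1}. The key auxiliary quantities are: a supremum of the product $\Psi(\alp,\bet)=\prod_{i=1}^2|h(c_i\alp)h(d_i\bet)^3h(a_i\alp+b_i\bet)|$ (or the appropriate analogue with classical Weyl sums) over the minor arcs $\grn$, for which the argument of \cite[Lemma 10]{BW2007a} yields the decay factor $Q^{-1/10}$; the eighth-moment-type mean values of products of two Weyl sums over $\grM$, handled by Lemma \ref{lemma7.2} with a suitable choice of $\del$; and the full-box mean value $J_0$ of the product of the remaining sums, estimated via Lemma \ref{lemma7.4} (or directly by Hua's lemma, since $\int_0^1|f(\tet)|^8\d\tet\ll B^{5+\eps}$). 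Assembling these with exponents chosen so that they sum to $1$, one arrives at a bound of the shape $B^{s-6}Q^{-c}$ for some positive constant $c$, which since $Q=L^{1/100}$ comfortably implies $N(B;\grK_0)\ll B^{s-6}\calL^{-1}$.

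The main obstacle I anticipate is bookkeeping with the H\"older exponents: because $s$ can be $12$ or $13$ and $l$, $m$, $n$ vary over the three triples in family (E), the number of `extra' trivially-estimated Weyl sums $f(0)=g(0)$ changes, and the split between sums placed in the $\Psi$-factor, the mixed $\grM$-integrals, and the $J_0$-factor must be re-balanced so that the exponents are non-negative and sum to one while still extracting a positive power of $Q$. I would organise this by first reducing, via trivial estimates $|f(\tet)|\le B$ on all but eleven of the variables, to a core eleven-variable configuration — precisely as is done implicitly in \S\S3--4 — so that the exponent arithmetic becomes identical to the type B computation; the one genuinely new input is merely that the mean value inputs are now the (slightly stronger) Weyl-sum bounds of \S7 rather than the smooth-sum bounds of \S2, which only improves the final power of $Q$. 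Beyond this, the estimates are routine, and I would present the proof in the compressed style of \S5, indicating only the modifications to the argument of Lemma \ref{lemma4.1}.
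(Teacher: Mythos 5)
Your plan diverges substantially from the paper's proof, and it has a genuine gap that would need to be closed before the argument could be believed.

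The paper's proof of Lemma \ref{lemma8.1} does \emph{not} prune from $\grK$ to a subset $\grK_0$. It applies H\"older's inequality once, directly over all of $\grK$, with exponents that depend explicitly on $l$ and $m$: the auxiliary mean values $U_{ijk}$ and $V_{ijk}$ each carry only a factor $|f(a_i\alp+b_i\bet)|^2$, which is estimated via the $\sup_{\lam\in\dbR}$ form of Lemma \ref{lemma7.2} \emph{without} requiring $a_i\alp+b_i\bet$ to lie in $\grM$. This is precisely why no pruning step is needed in the type E case, unlike in Lemmata \ref{lemma3.1} and \ref{lemma4.1}, where the quantities $J_{i,k}^E$ carry a factor $|g(a_k\alp+b_k\bet)|^{5/2}$ that forces $a_k\alp+b_k\bet\in\grM$. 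You have imported a complication from the type B argument that the paper avoids entirely.

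The more serious issue is your assertion that ``the one genuinely new input is merely that the mean value inputs are now the (slightly stronger) Weyl-sum bounds of \S7 \ldots which only improves the final power of $Q$,'' so that ``the exponent arithmetic becomes identical to the type B computation.'' This is not correct, and it masks the real difficulty. Comparing Lemma \ref{lemma2.4} with Lemma \ref{lemma7.4}, the smooth bound is $B^{2t-11/4-9\tau}$ while the classical bound is $B^{2t-5/2}L^{\eps-3/2}$; since $11/4>5/2$, the classical bound is \emph{weaker} in its $B$-power, the gain being only a logarithmic saving. More decisively, Lemma \ref{lemma2.2} gives $\int_\grM|g(a\tet)|^{2+\del}|h(b\tet+\lam)|^2\d\tet\ll B^{1+\del}$, whereas the classical surrogate in Lemma \ref{lemma7.2} is $\int_\grM|f(a\tet)|^{3+\del}|f(b\tet+\lam)|^2\d\tet\ll B^{2+\del}$, which requires exponent at least $3$ (not $2$) on the first factor. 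The weight $5/2$ appearing in $\Phi_E$ and $J_{i,k}^E$ in \S4 therefore cannot simply be carried over; the paper compensates by using weights $10/3$ and $13/3$ (note $10/3+13/3+2\cdot 3=16$ exactly matches $8\cdot 2$, the pointwise exponent count), and the resulting H\"older exponents $1/(24lm)$ and $1/(8lm)$ are tuned to this. Verifying that \emph{some} recalibration of the type B exponents actually closes, with nonnegative exponents summing to one and a residual positive power of $Q$, is exactly the heart of the lemma, and you have not done it. The paper's explicit exponent choices show that it can be done, but the balancing is nontrivial, not ``identical to the type B computation,'' and your route would need the same care.

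Finally, even for the pruning estimate on $\grK\setminus\grK_0$, a naive application of Cauchy--Schwarz with only the full-circle mean values of Lemma \ref{lemma7.4} gives $B^{s-23/4+\eps}$, which is \emph{larger} than $B^{s-6}$; one must additionally exploit that $\bet\in\grM$ on $\grK$, using $\int_\grM|H(\bet)|^2\d\bet\ll B^{2n-3}$, to salvage the bound. This is a further instance where the details do not transfer mechanically.
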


\begin{proof} Define the mean values
\begin{align*}
U_{ijk}&=\iint_\grK |f(c_j\alp)|^{10/3}|f(d_k\bet)|^{13/3}|f(a_i\alp+b_i\bet)|^2\d\alp\d\bet,\\
V_{ijk}&=\iint_\grK |f(c_j\alp)|^{13/3}|f(d_k\bet)|^{10/3}|f(a_i\alp+b_i\bet)|^2\d\alp\d\bet
\end{align*}
and put
$$\Psi(\alp,\bet)=\prod_{i=1}^l|f(a_i\alp+b_i\bet)|^{24(l-2)m}\prod_{j=1}^m|f(c_j\alp)|^{4l(6m-23)}\prod_{k=1}^4|f(d_k\bet)|^{lm}.$$
Then an application of H\"older's inequality reveals that
\begin{equation}\label{8.1}
N(B;\grK)\le \Bigl( \sup_{(\alp,\bet)\in \grn}\Psi(\alp,\bet)\Bigr)^{1/(24lm)}\prod_{i=1}^l\prod_{j=1}^m\prod_{k=1}^4(U_{ijk}V_{ijk})^{1/(8lm)}.
\end{equation}

\par The argument of the proof of \cite[Lemma 10]{BW2007a} shows that
$$\sup_{(\alp,\bet)\in \grn}\Psi(\alp,\bet)\ll (B^{l+m-17/3})^{24lm}Q^{-1/10}.$$
As a consequence of Lemma \ref{lemma7.2}, meanwhile, one has
\begin{align*}
U_{ijk}&\le \Bigl( \int_\grM|f(d_k\bet)|^{13/3}\d\bet \Bigr)\Bigl( \sup_{\lam\in\dbR}\int_\grM |f(c_j\alp)|^{10/3}|f(a_i\alp+\lam)|^2\d\alp \Bigr)\\
&\ll (B^{4/3})(B^{7/3})=B^{11/3},
\end{align*}
and a symmetric argument yields the estimate $V_{ijk}\ll B^{11/3}$. Combining these estimates within the framework of (\ref{8.1}), we conclude that
$$N(B;\grK)\ll (B^{l+m-17/3}Q^{-1/(240lm)})(B^{11/3})\ll B^{s-6}Q^{-1/4800},$$
and the bound asserted in the lemma follows immediately.
\end{proof}

We proceed now as in \S4, adopting the notation introduced in the discussion associated with Lemmata \ref{lemma3.2} and \ref{lemma3.3}. For systems of type E, the truncation parameter is fixed to be $T=B^{l-3}L$. As in \S4, we pause at this point to establish an estimate for the auxiliary quantity $\Zet_i$.

\begin{lemma}\label{lemma8.2} For systems of type E, one has $\Zet_i\ll B^{2l-3}L^{\eps-3}$ $(i=1,2)$.
\end{lemma}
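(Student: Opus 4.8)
The plan is to imitate the proof of Lemma \ref{lemma4.2} essentially verbatim, substituting Lemma \ref{lemma7.4} for Lemma \ref{lemma2.4} as the source of minor arc mean value estimates, Lemmata \ref{lemma7.2} and \ref{lemma7.3} for Lemma \ref{lemma2.2} as the source of major arc estimates, and recalling that for systems of type E the truncation parameter is $T=B^{l-3}L$. Since $l\ge4$ for systems of type E, the functions $F(\alp,0)=\prod_{i=1}^lf(a_i\alp)$ and $F(0,\bet)=\prod_{i=1}^lf(b_i\bet)$ are each products of at least four classical Weyl sums; identifying them with $D_l$ in the notation of Lemma \ref{lemma7.4}, its second estimate supplies
$$\int_\grm|F(\alp,0)|^2\d\alp\ll B^{2l-3}L^{\eps-3}\quad\text{and}\quad\int_\grm|F(0,\bet)|^2\d\bet\ll B^{2l-3}L^{\eps-3}.$$
Moreover, since $\rho_1(0)$ and $\rho_2(0)$ each count the integral points of a box of side $B$ on a single diagonal cubic surface in $l\ge4$ variables, a routine application of H\"older's inequality to the relevant Weyl sum product gives the crude bound $\rho_1(0)+\rho_2(0)\ll B^{l-2+\eps}$, which will be enough to dispose of the frequency zero.

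I would first treat the case $i=2$, which is the simpler. As in the proof of Lemma \ref{lemma4.2} one has $\Zet_2=\sum\rho_2(v)^2$, the sum being over $v\in\grX$ with $\rho_2(v)>T$. For such $v\ne0$, orthogonality gives $\rho_2(v)=\int_0^1F(0,\bet)e(-\bet v)\d\bet$, while the methods underlying Lemma \ref{lemma7.3} (a product of $l\ge4$ Weyl sums twisted by a non-zero frequency) furnish $\bigl|\int_\grM F(0,\bet)e(-\bet v)\d\bet\bigr|\ll B^{l-3}L^\eps=o(T)$; hence the minor arc part of the integral exceeds $\tfrac12\rho_2(v)$, and Bessel's inequality together with the minor arc bound above yields a contribution $O(B^{2l-3}L^{\eps-3})$. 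The term $v=0$ contributes at most $\rho_2(0)^2\ll B^{2l-4+\eps}$, which is also $O(B^{2l-3}L^{\eps-3})$. This establishes the case $i=2$.

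For $i=1$ I would follow the corresponding part of the proof of Lemma \ref{lemma4.2}. Since $(u,v)\in\grX_1$ forces $\rho_2(v)\le T$, one has $\Zet_1\le T\sum_{(u,v)\in\grX_1}\rho(u,v)$, and $\sum_{(u,v)\in\grX_1}\rho(u,v)\le\sum_{u:\rho_1(u)>T}\rho_1(u)\le T^{-1}\sum_{u:\rho_1(u)>T}\rho_1(u)^2$. Arguing exactly as in the case $i=2$, for $u\ne0$ one passes to the minor arc integral at the cost of a factor $2$ and then invokes Bessel's inequality and the minor arc bound for $F(\alp,0)$, obtaining $\sum_{u\ne0,\,\rho_1(u)>T}\rho_1(u)^2\ll B^{2l-3}L^{\eps-3}$; the term $u=0$ contributes at most $\rho_1(0)\ll B^{l-2+\eps}$ to $\sum_{(u,v)\in\grX_1}\rho(u,v)$. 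Hence $\sum_{(u,v)\in\grX_1}\rho(u,v)\ll T^{-1}B^{2l-3}L^{\eps-3}+B^{l-2+\eps}\ll B^{l}L^{\eps-4}$, and multiplying through by $T=B^{l-3}L$ gives $\Zet_1\ll B^{2l-3}L^{\eps-3}$, as required.

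The one point demanding more than a mechanical transcription of \S4 is that $T=B^{l-3}L$ exceeds the generic major arc bound only by a power of $L$, not a power of $B$: the crude estimate $\int_\grM|F(\alp,0)|\d\alp\ll B^{l-3+\eps}$ from H\"older's inequality and Lemma \ref{lemma7.2} is no longer small compared with $T$, so one cannot conclude from it that $\rho_j(w)>T$ forces the minor arc integral to dominate. The remedy, and the essential new input, is the sharper bound $\ll B^{l-3}L^\eps$ for the major arc integral of a product of $l\ge4$ Weyl sums twisted by a non-zero frequency, available from the methods of \cite[\S\S4.3 and 4.4]{Vau1997} (cf.\ \cite[equations (1.3) and (1.4)]{Kaw1996}) that underpin Lemma \ref{lemma7.3}. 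With that estimate in hand, the remaining work is routine bookkeeping with the exponents.
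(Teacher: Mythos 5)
Your argument is correct and follows the paper's own proof in all essentials: both treat the frequency zero separately via the crude bound $\rho_j(0)\ll B^{l-2}$ (your extra $B^\eps$ is harmless), both use the sharpened major-arc bound $\int_\grM F(\alp,0)e(-u\alp)\d\alp\ll B^{l-3}L^\eps$ for $u\ne 0$ (rather than the crude $B^{l-3+\eps}$) to pass to the minor arcs when $\rho_j(w)>T=B^{l-3}L$, and both then apply Bessel's inequality with the bound $\int_\grm|F|^2\ll B^{2l-3}L^{\eps-3}$ from Lemma \ref{lemma7.4}. Your closing observation that the log-power truncation $T$ forces the sharper major-arc estimate is exactly the point the paper addresses implicitly by invoking the method of Lemma \ref{lemma7.3}; you have merely made it explicit.
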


\begin{proof} Since $l\ge 4$, the estimate
$$\int_\grM F(\alp,0)e(-\alp u)\d\alp \ll B^{l-3}L^\eps\ (u\ne 0)$$
may be established just as in the proof of Lemma \ref{lemma7.3}. Meanwhile, H\"older's inequality combines with the first estimate of (\ref{7.2}) to supply the bound
\begin{equation}\label{8.2}
\int_0^1 F(\alp,0)\d\alp \ll B^{l-2},
\end{equation}
and Lemma \ref{lemma7.4} delivers the estimate
\begin{equation}\label{8.2a}
\int_\grm |F(\alp,0)|^2\d\alp \ll B^{2l-3}L^{\eps-3}.
\end{equation}
The argument of the proof of Lemma \ref{lemma4.2} therefore demonstrates that
\begin{align*}
\sum_{(u,v)\in \grX_1}\rho(u,v)&\le B^{3-l}L^{-1}\sum_{\substack{u\in\grX\\ \rho_1(u)>B^{l-3}L}}\rho_1(u)^2\\
&\ll B^{3-l}L^{-1}\Bigl( \rho_1(0)^2+\int_\grm |F(\alp,0)|^2\d\alp \Bigr) \\
&\ll B^{3-l}L^{-1}((B^{l-2})^2+B^{2l-3}L^{\eps-3}),
\end{align*}
and hence
$$\Zet_1\le B^{l-3}L\sum_{(u,v)\in \grX_1}\rho(u,v)\ll B^{2l-3}L^{\eps-3}.$$
Similarly, and again following the argument of the proof of Lemma \ref{lemma4.2}, one finds that
$$\Zet_2\ll \Bigl( \rho_2(0)^2+\int_\grm |F(0,\bet)|^2\d\bet \Bigr)\ll (B^{l-2})^2+B^{2l-3}L^{\eps-3}.$$
The conclusion of the lemma therefore follows both for $i=1$ and $i=2$.
\end{proof}

\begin{lemma}\label{lemma8.3} For systems of type E, one has
$$N_0(B;\grM,\grM)-N(B;\grM\times \grM)\ll B^{s-6}L^{\eps-3/2}.$$
\end{lemma}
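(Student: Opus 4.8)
The plan is to follow the argument of Lemma~\ref{lemma4.3} essentially verbatim, making only the cosmetic adjustments forced by the transition from smooth Weyl sums to classical ones. Recall that $N_0(B;\grM,\grM)$ differs from $N(B;\grM\times\grM)$ precisely by the contribution of the pairs $(u,v)\in\grX_1\cup\grX_2$, so that
\begin{equation}\label{8.3}
N(B;\grM\times\grM)-N_0(B;\grM,\grM)=\sum_{(u,v)\in\grX_1\cup\grX_2}\rho(u,v)R_1(u;\grM)R_2(v;\grM).
\end{equation}
As in \S4, the main device is to estimate the quantity $\Ups_i=\sum_{(u,v)\in\grX_i}\rho(u,v)|R_2(v;\grM)|$ by means of Cauchy's inequality, bounding $\Zet_i$ via Lemma~\ref{lemma8.2} and $\sum_{v\in\grX}|R_2(v;\grM)|^2$ via Bessel's inequality together with the first estimate of Lemma~\ref{lemma7.4} applied to $H(\bet)$; the remaining factor $R_1(u;\grM)$ is handled by Lemma~\ref{lemma7.3}, noting that for systems of type E one has $m\ge 4$.

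First I would record the Bessel step: since $H(\bet)=\prod_{k=1}^nf(d_k\bet)$ with $n\ge 4$, the first estimate of Lemma~\ref{lemma7.4} (taking $t=n$) gives
$$\sum_{v\in\grX}|R_2(v;\grM)|^2\le \int_0^1|H(\bet)|^2\d\bet\ll B^{2n-5/2}L^{\eps-3/2}.$$
Combining this with Lemma~\ref{lemma8.2} via Cauchy's inequality yields, for $i\in\{1,2\}$,
$$\Ups_i\le \Zet_i^{1/2}\Bigl(\sum_{v\in\grX}|R_2(v;\grM)|^2\Bigr)^{1/2}\ll \bigl(B^{2l-3}L^{\eps-3}\bigr)^{1/2}\bigl(B^{2n-5/2}L^{\eps-3/2}\bigr)^{1/2}\ll B^{l+n-11/4}L^{\eps-9/4}.$$
Next I would insert the bound $R_1(u;\grM)\ll B^{m-3}L^\eps$, valid for all $u\ne 0$ by Lemma~\ref{lemma7.3}, and handle the single term $u=0$ separately using $R_1(0;\grM)\ll B^{m-3+\eps}$ (contributing $\rho(0,v)$ summed against $|R_2(v;\grM)|$, which is absorbed comfortably). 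Feeding these into \eqref{8.3} gives
$$N(B;\grM\times\grM)-N_0(B;\grM,\grM)\ll B^{m-3}L^\eps(\Ups_1+\Ups_2)\ll B^{l+m+n-23/4}L^{\eps-9/4}=B^{s-23/4}L^{\eps-9/4},$$
which is stronger than the claimed $B^{s-6}L^{\eps-3/2}$; indeed the exponent of $B$ drops by $23/4$ below $s$, and the logarithmic power is harmless.

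The one point requiring genuine care—the step I expect to be the mild obstacle—is ensuring that the diagonal contribution $u=0$ (and symmetrically any boundary effects hidden in the passage from $\grX_1\cup\grX_2$ to the sum over all $v$) does not spoil the logarithmic saving. Here one should note that $\rho(0,v)$ is controlled by $\rho_1(0)\ll B^{l-2}$ from \eqref{8.2}, so the $u=0$ contribution to \eqref{8.3} is at most $B^{l-2}B^{m-3+\eps}\sum_v\rho(0,v)^{1/2}|\cdots|$—more directly, it is bounded by $\rho_1(0)\,R_1(0;\grM)\sup_v|R_2(v;\grM)|$ times the number of relevant $v$, which a trivial count places well below $B^{s-6}L^{\eps-3/2}$. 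With this caveat dispatched, the reader will have no difficulty completing the argument by a routine imitation of the proof of Lemma~\ref{lemma4.3}, and the lemma follows.
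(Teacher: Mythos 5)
Your approach follows the paper's strategy, but there is a genuine quantitative gap that defeats the argument as written. The issue is the bound you place on $\int_0^1|H(\bet)|^2\d\bet$. You invoke the first estimate of Lemma~\ref{lemma7.4} with $t=n$, getting $B^{2n-5/2}L^{\eps-3/2}$. That estimate is valid for all $t\ge 3$ and is therefore not sharp when $n\ge 4$. Feeding it through Cauchy--Schwarz with Lemma~\ref{lemma8.2} gives $\Ups_i\ll B^{l+n-11/4}L^{\eps-9/4}$, and then the final expression is $B^{s-23/4}L^{\eps-9/4}$. You assert this is ``stronger than the claimed $B^{s-6}L^{\eps-3/2}$'', but in fact $s-23/4=s-5.75>s-6$, so your bound exceeds the target by a factor $B^{1/4}L^{-3/4}$, which tends to infinity. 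The logarithmic saving cannot compensate for a positive power of $B$, so the lemma does not follow from the chain of inequalities you have written.

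The fix is exactly what the paper does: since $n\ge 4$ for systems of type E, one has the stronger polynomial bound $\int_0^1|H(\bet)|^2\d\bet\ll B^{2n-3}$. This is obtained by splitting $\int_0^1=\int_\grM+\int_\grm$ and applying Lemma~\ref{lemma7.2} (which gives $\ll B^{2n-3}$ for the major arcs, since $|H|^2$ is a product of $2n\ge 8$ factors) together with the \emph{second} estimate of Lemma~\ref{lemma7.4} (valid for $t\ge 4$, giving $\ll B^{2n-3}L^{\eps-3}$ on the minor arcs). The major arc piece dominates, yielding a clean $B^{2n-3}$ with no log loss. With this input, $\Ups_i\ll B^{l+n-3}L^{\eps-3/2}$, and multiplying by $R_1(u;\grM)\ll B^{m-3}L^\eps$ produces exactly $B^{s-6}L^{\eps-3/2}$ as claimed. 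Your treatment of the $u=0$ diagonal term is imprecise in its phrasing (one does not multiply by ``the number of relevant $v$''; rather, one sums $\rho(0,v)R_2(v;\grM)$ directly, obtaining $\ll B^{n-3+\eps}\rho_1(0)\ll B^{l+n-5+\eps}$ and hence a contribution $\ll B^{s-8+\eps}$), but the underlying idea there is sound and is indeed how the paper disposes of that term. The essential flaw is the choice of mean value estimate for $H$.
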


\begin{proof} Since in present circumstances one has $n\ge 4$, it follows from Lemmata \ref{lemma7.2} and \ref{lemma7.4} via H\"older's inequality that
\begin{equation}\label{8.3}
\int_0^1|H(\bet)|^2\d\bet\le \int_\grM|H(\bet)|^2\d\bet +\int_\grm |H(\bet)|^2\d\bet \ll B^{2n-3}.
\end{equation}
Following the argument of the proof of Lemma \ref{lemma4.3}, therefore, one obtains
$$\Ups_i\le \Zet_i^{1/2}\Bigl( \int_0^1|H(\bet)|^2\d\bet\Bigr)^{1/2}\ll B^{l+n-3}L^{\eps-3/2}.$$
For systems of type E one has $m\ge 4$, and so it follows from Lemma \ref{lemma7.3} that
\begin{align*}N(B;\grM\times \grM)&-N_0(B;\grM,\grM)\\
&=\sum_{(u,v)\in \grX_1\cup \grX_2}\rho(u,v)R_1(u;\grM)R_2(v;\grM)\\
&\ll B^{m-3}\Bigl( B^\eps \sum_{v\in \grX}\rho(0,v)|R_2(v;\grM)|+L^\eps (\Ups_1+\Ups_2)\Bigr).
\end{align*}
But since $l\ge 4$ and $n\ge 4$, one finds from (\ref{8.2}) and Lemma \ref{lemma7.3} that
$$\sum_{v\in \grX}\rho(0,v)=\rho_1(0)\ll B^{l-2}\quad \text{and}\quad R_2(v;\grM)\ll B^{n-3+\eps}.$$
Consequently,
\begin{align*}
N(B;\grM\times \grM)-N_0(B;\grM,\grM)&\ll B^{m-3}(B^{l+n-5+\eps}+B^{l+n-3}L^{\eps-3/2})\\
&\ll B^{s-6}L^{\eps-3/2},
\end{align*}
and the proof of the lemma is complete.
\end{proof}

\begin{lemma}\label{lemma8.4} For systems of type E, one has $N_0(B;\grm,[0,1))\ll B^{s-6}L^{\eps-1/2}$.
\end{lemma}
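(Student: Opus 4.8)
The plan is to follow the proof of Lemma~\ref{lemma4.4} almost line for line, with the Weyl sum mean value estimates of Lemma~\ref{lemma7.4} and the bound $\int_0^1|H(\bet)|^2\d\bet\ll B^{2n-3}$ from~(\ref{8.3}) playing the roles of their type~B counterparts. Recall that for systems of type~E the truncation parameter is $T=B^{l-3}L$, and that $N_0(B;\grm,[0,1))=\sum_{(u,v)\in\grX_0}\rho(u,v)R_1(u;\grm)R_2(v;[0,1))$. First I would apply Cauchy's inequality to write $N_0(B;\grm,[0,1))\le V_1^{1/2}V_2^{1/2}$, where $V_1=\sum_{(u,v)\in\grX_0}\rho(u,v)|R_1(u;\grm)|^2$ and $V_2=\sum_{(u,v)\in\grX_0}\rho(u,v)|R_2(v;[0,1))|^2$.

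To bound $V_1$, I would group by $u$, use $\sum_v\rho(u,v)=\rho_1(u)$ together with the constraint $\rho_1(u)\le T$ built into $\grX_0$, and then invoke Bessel's inequality to obtain $V_1\le T\sum_{u\in\grX}|R_1(u;\grm)|^2\le T\int_\grm|G(\alp)|^2\d\alp$. Since $G(\alp)=\prod_{j=1}^m f(c_j\alp)$ with each $c_j$ nonzero, Lemma~\ref{lemma7.4} then gives $\int_\grm|G(\alp)|^2\d\alp\ll B^{2m-3}L^{\eps-3}$ when $m=4$, and $\int_\grm|G(\alp)|^2\d\alp\ll B^{2m-7/2}L^{\eps-5/2}$ when $m=5$. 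For $V_2$ an identical argument, using now the constraint $\rho_2(v)\le T$ on $\grX_0$ and the estimate~(\ref{8.3}), gives $V_2\le T\int_0^1|H(\bet)|^2\d\bet\ll TB^{2n-3}$, where $n=4$ throughout type~E.

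It then remains to multiply, recalling $T=B^{l-3}L$ and $s=l+m+n$. When $m=4$ this produces $N_0(B;\grm,[0,1))\ll B^{l-3}L\,(B^{2m-3}L^{\eps-3})^{1/2}(B^{2n-3})^{1/2}\ll B^{s-6}L^{\eps/2-1/2}$, which is absorbed into the asserted bound $B^{s-6}L^{\eps-1/2}$; when $m=5$ the sharper minor arc estimate for $\int_\grm|G|^2\d\alp$ leaves the stronger bound $B^{s-25/4}L^{\eps/2-1/4}$, and since $s-\tfrac{25}{4}=s-6-\tfrac14$ the factor $B^{-1/4}$ comfortably absorbs the excess power of $L$. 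The hard part will be nothing more than the $L$-power bookkeeping in the critical case $m=4$: one must feed in Boklan's minor arc eighth-moment bound through the second estimate of Lemma~\ref{lemma7.4} at $t=4$ for $\int_\grm|G|^2\d\alp$, and the major/minor arc splitting behind~(\ref{8.3}) (rather than the first estimate of Lemma~\ref{lemma7.4}) for $\int_0^1|H|^2\d\bet$, since either of the weaker alternatives would only yield a bound of size $B^{s-6}B^{1/4}$ and so miss the convexity threshold entirely.
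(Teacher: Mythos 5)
Your proposal is correct and essentially identical to the paper's proof: both adapt the Cauchy--Schwarz/Bessel argument of Lemma~\ref{lemma4.4} with truncation parameter $T=B^{l-3}L$, then invoke the second estimate of Lemma~\ref{lemma7.4} for $\int_\grm|G(\alp)|^2\d\alp$ and (\ref{8.3}) for $\int_0^1|H(\bet)|^2\d\bet$. The only cosmetic difference is that you split into the cases $m=4$ and $m=5$, whereas the paper uses the uniform $t\ge4$ bound $B^{2m-3}L^{\eps-3}$ for both values of $m$; this is harmless but unnecessary, since the uniform bound already suffices.
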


\begin{proof} By adapting the argument of the proof of Lemma \ref{lemma4.4} to the present context, one obtains
$$N_0(B;\grm,[0,1))\ll B^{l-3}L\Bigl(\int_\grm |G(\alp)|^2\d\alp\Bigr)^{1/2}\Bigl( \int_0^1|H(\bet)|^2\d\bet \Bigr)^{1/2}.$$
The first integral on the right hand side may be estimated by means of Lemma \ref{lemma7.4}, and the second from (\ref{8.3}). Thus one finds that
$$N_0(B;\grm,[0,1))\ll B^{l-3}L(B^{2m-3}L^{\eps-3})^{1/2}(B^{2n-3})^{1/2}\ll B^{s-6}L^{\eps-1/2}.$$
This completes the proof of the lemma.
\end{proof}

\begin{lemma}\label{lemma8.5} For systems of type E, one has $N_0(B;\grM,\grm)\ll B^{s-6}L^{\eps-1/2}$.
\end{lemma}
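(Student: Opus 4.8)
The plan is to estimate $N_0(B;\grM,\grm)$ directly from its defining identity $(\ref{3.15})$, exploiting the arithmetic truncation rather than comparing with $N(B;\grM\times\grm)$. Recall that for systems of type E the truncation parameter is $T=B^{l-3}L$, so that $(\ref{3.15})$ reads
\[
N_0(B;\grM,\grm)=\sum_{(u,v)\in\grX_0}\rho(u,v)R_1(u;\grM)R_2(v;\grm).
\]
I would separate the single term $u=0$ from the remaining terms and bound each contribution in turn.

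For the terms with $u\ne0$ one has, since $m\ge4$, the uniform bound $|R_1(u;\grM)|\ll B^{m-3}L^\eps$ from Lemma~\ref{lemma7.3}. Moreover, whenever $(u,v)\in\grX_0$ one has $\rho_2(v)\le T$, so that, by $(\ref{3.10})$, $\sum_{u:(u,v)\in\grX_0}\rho(u,v)\le T$ for every $v$. Hence these terms contribute at most a constant multiple of
\[
B^{m-3}L^\eps\,T\sum_{v\in\grX}|R_2(v;\grm)|.
\]
By Cauchy's inequality, Bessel's inequality, and the second estimate of Lemma~\ref{lemma7.4} (valid since $n\ge4$), one obtains
\[
\sum_{v\in\grX}|R_2(v;\grm)|\le|\grX|^{1/2}\Bigl(\int_\grm|H(\bet)|^2\,\d\bet\Bigr)^{1/2}\ll B^{3/2}\bigl(B^{2n-3}L^{\eps-3}\bigr)^{1/2}\ll B^{n}L^{\eps-3/2},
\]
and, recalling $T=B^{l-3}L$, the contribution of the terms with $u\ne0$ is accordingly
\[
\ll B^{m-3}L^\eps\cdot B^{l-3}L\cdot B^{n}L^{\eps-3/2}\ll B^{s-6}L^{\eps-1/2}.
\]

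For the term with $u=0$ I distinguish two cases. If $\rho_1(0)>T$ then $(0,v)\notin\grX_0$ for every $v$ and the term is absent. Otherwise $\rho_1(0)\le T$, and the term is bounded by
\[
|R_1(0;\grM)|\sum_v\rho(0,v)|R_2(v;\grm)|\le|R_1(0;\grM)|\,\rho_1(0)\,\sup_v|R_2(v;\grm)|,
\]
on using $(\ref{3.10})$. Here $R_1(0;\grM)\ll B^{m-3+\eps}$ by Lemma~\ref{lemma7.3}, while H\"older's inequality together with the first estimate of $(\ref{7.2})$ (recalling $n\ge4$) gives $\sup_v|R_2(v;\grm)|\le\int_\grm|H(\bet)|\,\d\bet\ll B^{n-2}$. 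Thus this term is $\ll B^{m-3+\eps}\cdot T\cdot B^{n-2}=B^{s-8+\eps}L\ll B^{s-6}L^{-1}$. Combining this with the estimate for the terms $u\ne0$ completes the proof.

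The point at which care is needed is the treatment of the terms with $u\ne0$: it is essential to bound $\sum_{u:(u,v)\in\grX_0}\rho(u,v)$ by the truncation level $T$ rather than by $\rho_2(v)$ followed by a mean value of $F(0,\bet)$, for the full-interval second moment $\int_0^1|F(0,\bet)|^2\,\d\bet=\sum_v\rho_2(v)^2$ exceeds $B^{2l-3}$ by a factor $B^{1/2}$, and a direct transcription of the argument of Lemma~\ref{lemma3.4} would leak a power of $B$. The entire saving is instead extracted from the minor-arc mean value $\int_\grm|H(\bet)|^2\,\d\bet\ll B^{2n-3}L^{\eps-3}$ furnished by Lemma~\ref{lemma7.4}, ultimately a consequence of Boklan's refinement of Hua's inequality; it is the factor $L^{-3}$ there that, after Cauchy's inequality, produces the logarithmic power $L^{\eps-1/2}$ in the conclusion.
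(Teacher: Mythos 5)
Your proof is correct, and it takes a genuinely different route from the paper's. The paper adapts the proof of Lemma~\ref{lemma4.4}: apply Cauchy's inequality directly to the double sum in~(\ref{3.15}) to obtain $N_0(B;\grM,\grm)\le V_1^{1/2}V_2^{1/2}$ with $V_1=\sum_{(u,v)\in\grX_0}\rho(u,v)|R_1(u;\grM)|^2$ and $V_2=\sum_{(u,v)\in\grX_0}\rho(u,v)|R_2(v;\grm)|^2$, then apply the truncation $T=B^{l-3}L$ symmetrically to both $V_1$ and $V_2$, arriving at $B^{l-3}L\bigl(\int_0^1|G|^2\bigr)^{1/2}\bigl(\int_\grm|H|^2\bigr)^{1/2}\ll B^{l-3}L\,(B^{2m-3})^{1/2}(B^{2n-3}L^{\eps-3})^{1/2}$. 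You instead use the uniform pointwise bound $R_1(u;\grM)\ll B^{m-3}L^\eps$ (which forces you to split off the term $u=0$, since Lemma~\ref{lemma7.3} requires $u\ne 0$ for the $L^\eps$ bound), apply the truncation $T$ to the inner $u$-sum of $\rho(u,v)$, and reserve Cauchy's inequality for the resulting $v$-sum, paying the factor $|\grX|^{1/2}\asymp B^{3/2}$. Both routes yield the same power $B^{s-6}L^{\eps-1/2}$, since $(\int_0^1|G|^2)^{1/2}$ and $|\grX|^{1/2}\sup_{u\ne 0}|R_1(u;\grM)|$ are of the same order; the paper's symmetric Cauchy avoids having to handle $u=0$ separately and so reads a little more cleanly, while your version replaces the mean value bound on $G$ by a pointwise one. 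Your closing remark correctly identifies why the Lemma~\ref{lemma3.4} template (Cauchy against $\sum_v\rho_2(v)^2=\int_0^1|F(0,\bet)|^2\d\bet$) would lose a power of $B$ here; the paper sidesteps this exactly as you do, by exploiting the truncation level $T$ rather than the full second moment of $F(0,\bet)$.
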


\begin{proof} By adapting the argument of the proof of Lemma \ref{lemma4.4} to the present context, one obtains
$$N_0(B;\grM,\grm)\ll B^{l-3}L\Bigl(\int_0^1 |G(\alp)|^2\d\alp\Bigr)^{1/2}\Bigl( \int_\grm|H(\bet)|^2\d\bet \Bigr)^{1/2}.$$
The first integral on the right hand side may be estimated by means of a variant of (\ref{8.3}), and the second by means of Lemma \ref{lemma7.4}. Thus one finds that
$$N_0(B;\grM,\grm)\ll B^{l-3}L(B^{2m-3})^{1/2}(B^{2n-3}L^{\eps-3})^{1/2}\ll B^{s-6}L^{\eps-1/2}.$$
This completes the proof of the lemma.
\end{proof}

We may now complete the proof of Theorem \ref{theorem1.2} for systems of type E. We simply combine (\ref{7.1}) with Lemmata \ref{lemma8.1}, \ref{lemma8.3}, \ref{lemma8.4} and \ref{lemma8.5} as in the analogous argument completing the discussion of \S4, obtaining
$$N(B)\ge N(B;\grN)+O(B^{s-6}\calL^{-1})=\calC B^{s-6}+O(B^{s-6}\calL^{-1}).$$

\section{Systems of type F} In common with the treatment of systems of type C in \S5, our argument for systems of type F, wherein $s=13$ and $(l,m,n)=(3,5,5)$, may be substantially abbreviated by adjusting the argument of \S8 through modification of the generating functions $F(\alp,\bet)$, $G(\alp)$ and $H(\bet)$. We begin with a discussion of the pruning operation implicit in the estimation of $N(B;\grK)$.\par

\begin{lemma}\label{lemma9.1} For systems of type F, one has $N(B;\grK)\ll B^7\calL^{-1}$.
\end{lemma}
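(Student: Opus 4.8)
The plan is to establish Lemma \ref{lemma9.1} for the shape $(l,m,n)=(3,5,5)$ directly, transcribing the H\"older-based pruning argument employed for systems of type C in the proof of Lemma \ref{lemma5.1}. With the generating functions of \S7, the integrand whose modulus must be integrated over $\grK=(\grM\times\grM)\setminus\grN$ is the thirteen-fold product
\[F(\alp,\bet)G(\alp)H(\bet)=\prod_{i=1}^3f(a_i\alp+b_i\bet)\prod_{j=1}^5f(c_j\alp)\prod_{k=1}^5f(d_k\bet),\]
so, since $s-6=7$, the goal is precisely to show that $\iint_\grK|F(\alp,\bet)G(\alp)H(\bet)|\d\alp\d\bet\ll B^7\calL^{-1}$. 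The only structural departure from type C is the presence of three genuinely mixed Weyl sums $f(a_i\alp+b_i\bet)$ $(i=1,2,3)$ in $F$, rather than the two sums $h(a_i\alp+b_i\bet)$ $(i=1,2)$ occurring there; the subsequent relabelling to the type E shape $(5,4,4)$ plays no role in this pruning step and is deferred.

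The argument would rest on three ingredients. First, a product $\Psi(\alp,\bet)$ comprising one copy of each of the mixed sums $f(a_i\alp+b_i\bet)$ together with one copy of each of a pair of $\alp$-only sums, say $f(c_1\alp)$ and $f(c_2\alp)$, and a pair of $\bet$-only sums $f(d_1\bet)$ and $f(d_2\bet)$, raised to suitable integer powers. Since $\Psi$ contains both pure and genuinely mixed factors, the argument of the proof of \cite[Lemma 10]{BW2007a} shows that $\Psi$ is small on the two-dimensional minor arcs $\grn$, losing a factor $Q^{-1/10}$ against its trivial upper bound; because $\grK\subseteq\grn$, an appropriate power of $\sup_{(\alp,\bet)\in\grn}\Psi(\alp,\bet)$ may be extracted as the decisive saving. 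Secondly, mean values over $\grK$ of the shape $\iint_\grK|f(c_i\alp)|^{\gam}|f(d_j\bet)|^{\del}|f(a_t\alp+b_t\bet)|^2\d\alp\d\bet$, with $i,j\in\{3,4,5\}$ and $t\in\{1,2,3\}$, the exponents $\gam$ and $\del$ being chosen large enough that the relevant cases of Lemma \ref{lemma7.2} apply. Here, since $a_t\ne0$ and $b_t\ne0$, one fixes $\bet$ and regards $f(a_t\alp+b_t\bet)$ as a cubic Weyl sum $f(a_t\alp+\lam)$ in $\alp$; the integral then factors as a product of two one-dimensional major-arc integrals, controlled by the two estimates of Lemma \ref{lemma7.2}. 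Thirdly, the eighth moments $\int_0^1\int_0^1|f(c_k\alp)f(d_k\bet)|^8\d\alp\d\bet$, which on factoring and appealing to \cite[Theorem 2]{Vau1986} are $\ll B^{10}$.

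A single application of H\"older's inequality, with the exponents of these mean values arranged so that the thirteen factors of the integrand are reconstituted exactly and the reciprocals of the H\"older exponents attached to the non-supremum factors sum to $1$, would then yield a bound $N(B;\grK)\ll B^7Q^{-c}$ for some positive constant $c$. Since $Q=L^{1/100}$ and $\calL=\log L$, this is $O(B^7\calL^{-1})$, which is the assertion of the lemma.

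The essential difficulty is the choice of the H\"older exponents. Unlike in \S5, the classical Weyl sum $f$ satisfies only weaker mean-value estimates on the major arcs than do the smooth sums $g$ and $h$---in particular, the exponent on $f(c_i\alp)$ in the paired integral must exceed $3$ rather than $2$ for Lemma \ref{lemma7.2} to be of service---so the exponents of Lemma \ref{lemma5.1} cannot simply be carried over. One must verify that, with the third mixed sum distributed over the mean-value family in a way that both reconstitutes the integrand and keeps every $\grK$-supported mean value in the requisite product form, there is a consistent assignment of exponents for which the surviving powers of $B$ total exactly $s-6=7$, the H\"older condition holds, and a positive power of $Q$ survives. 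Once such an assignment is in hand, the remaining estimates are entirely routine, following the pattern of the proofs of Lemmata \ref{lemma5.1} and \ref{lemma8.1} with $f$ in place of $g$ and $h$ throughout.
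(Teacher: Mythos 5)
Your high-level plan (a supremum of a suitable $\Psi$ on $\grn$, Hölder over $\grK$-supported mean values controlled by Lemma \ref{lemma7.2}, eighth moments to absorb the surplus) is in the right genre, but you stop precisely where the work is: you never produce the exponent assignment, and you explicitly concede you have not verified that one exists. That is a genuine gap, not a routine detail. Moreover, transcribing the type C layout of Lemma \ref{lemma5.1}---where the two mixed $h$-sums are distributed into the $\grK$-supported mean values and the short-index pure sums live in both $\Psi$ and a separate pair of eighth moments---creates exactly the bookkeeping headache you identify, and it is not how the paper handles type F.

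The paper's actual proof is considerably simpler, and sidesteps the difficulty entirely by putting \emph{all three} mixed sums exclusively into $\Psi$ and none into the mean values. Specifically, one takes
$$\Psi(\alp,\bet)=\prod_{i=1}^3|f(a_i\alp+b_i\bet)|^{10}\prod_{j=1}^5|f(c_j\alp)f(d_j\bet)|,\qquad
U_j=\iint_\grK |f(c_j\alp)f(d_j\bet)|^{9/2}\,\d\alp\,\d\bet .$$
Then $\Psi^{1/10}$ restores exactly one copy of each mixed sum and a $1/10$ power of each pure sum; the remaining $9/10$ powers of the ten pure sums are collected by H\"older with exponents $5$ into $\prod_{j=1}^5 U_j^{1/5}$, and the exponent accounting is immediate: $1/10+9/10=1$ for each pure factor and $10\cdot 1/10=1$ for each mixed factor. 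Each $U_j$ factors as a product of two one-dimensional major-arc integrals, each $\ll B^{3/2}$ by the second estimate of Lemma \ref{lemma7.2} (with $\del=1/2$); since $\deg\Psi=40$, the argument of \cite[Lemma 10]{BW2007a} gives $\sup_{\grn}\Psi\ll B^{40}Q^{-1/10}$, and the conclusion $N(B;\grK)\ll B^{4}Q^{-1/100}\cdot B^3=B^7Q^{-1/100}$ follows. No mixed sum ever appears in a $\grK$-supported mean value, no eighth moment is needed, and there is nothing to ``verify'' about a consistent exponent assignment: it is built in. You should replace the plan of distributing the mixed sums across the mean-value family with this cleaner decomposition, which is the natural generalisation not of the type C argument but of the structure already visible in Lemmata \ref{lemma8.1} and \ref{lemma10.1}.
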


\begin{proof} Define the mean values
$$U_j=\iint_\grK |f(c_j\alp)f(d_j\bet)|^{9/2}\d\alp\d\bet ,$$
and put
$$\Psi(\alp,\bet)=\prod_{i=1}^3|f(a_i\alp+b_i\bet)|^{10}\prod_{j=1}^5|f(c_j\alp)f(d_j\bet)|.$$
Then an application of H\"older's inequality reveals that
\begin{equation}\label{9.1}
N(B;\grK)\le \Bigl( \sup_{(\alp,\bet)\in \grn}\Psi(\alp,\bet)\Bigr)^{1/10}\prod_{j=1}^5U_j^{1/5}.
\end{equation}
The argument of the proof of \cite[Lemma 10]{BW2007a} shows that
$$\sup_{(\alp,\bet)\in \grn}\Psi(\alp,\bet)\ll B^{40}Q^{-1/10}.$$
As a consequence of Lemma \ref{lemma7.2}, meanwhile, one has
\begin{align*}
U_j&\le \Bigl( \int_\grM |f(c_j\alp)|^{9/2}\d\alp\Bigr)\Bigl( \int_\grM|f(d_j\bet)|^{9/2}\d\bet \Bigr)\\
&\ll (B^{3/2})^2=B^3.
\end{align*}
Combining these estimates with (\ref{9.1}), we conclude that
$$N(B;\grK)\ll (B^{40}Q^{-1/10})^{1/10}B^3\ll B^7Q^{-1/100}.$$
This completes the proof of the lemma.
\end{proof}

Our next step is to relabel the coefficients of the system (\ref{1.1}) so that $\mtil=m-1$, $\ntil=n-1$, $\ltil=l+2$, which is to say that $(\ltil,\mtil,\ntil)=(5,4,4)$, and to put
$$\ctil_j=c_j\quad \text{and}\quad \dtil_j=d_j\quad (1\le j\le 4),$$
and
$$(\atil_i,\btil_i)=(a_i,b_i)\quad (i=1,2,3),\quad (\atil_4,\btil_4)=(0,d_5)\quad (\atil_5,\btil_5)=(c_5,0).$$
As in the discussion of \S5, we then define the generating functions $\Ftil(\alp,\bet)$, $\Gtil(\alp)$ and $\Htil(\bet)$ as in the respective definitions of $F(\alp,\bet)$, $G(\alp)$ and $H(\bet)$ in (\ref{2.2}) and (\ref{2.3}), save that in the present context the integers $l$, $m$, $n$, and the coefficients $a_i$, $b_i$, $c_j$ and $d_k$, are to be decorated by tildes. Further notation from \S\S2 and 3 is again understood to have the meaning naturally inferred in like manner when decorated by a tilde. An examination of the argument of \S8, leading from the discussion preceding Lemma \ref{lemma8.2} to the conclusion of the section, now reveals that no adjustment is necessary in order to accommodate the change of circumstances implicit in our present analysis. Here it is worth noting that, despite the fact that we now have $\ltil=5$ and $\atil_4=0$, the presence of four non-zero coefficients in the equation (\ref{3.8}) ensures that the analogue of the upper bounds (\ref{8.2}) and (\ref{8.2a}) remain valid. Thus one obtains $\Zettil_1\ll B^7L^{\eps-3}$, and by means of a symmetric argument also $\Zettil_2\ll B^7L^{\eps-3}$. The analogue of Lemma \ref{lemma8.3} delivers the bound
$$\Ntil_0(B;\grM,\grM)-\Ntil(B;\grM\times \grM)\ll B^7L^{\eps-3/2},$$
and analogues of Lemmata \ref{lemma8.4} and \ref{lemma8.5} yield
$$\Ntil_0(B;\grm,[0,1))\ll B^7L^{\eps-1/2}\quad \text{and}\quad \Ntil_0(B;\grM,\grm)\ll B^7L^{\eps-1/2}.$$
We therefore deduce that
\begin{align*}
\Ntil(B)&\ge \Ntil_0(B;\grM,\grM)+\Ntil_0(B;\grM,\grm)+\Ntil_0(B;\grm,[0,1))\\
&=\Ntil(B;\grM\times \grM)+O(B^7L^{\eps-1/2})=N(B;\grM\times \grM)+O(B^7L^{\eps-1/2}).
\end{align*}
Finally, we conclude from Lemmata \ref{lemma7.1} and \ref{lemma9.1} that
$$N(B)=\Ntil(B)\ge N(B;\grN)+O(B^7\calL^{-1})=\calC B^7+O(B^7\calL^{-1}),$$
and this completes the proof of Theorem \ref{theorem1.2} for systems of type F.

\section{Systems of type G} Our argument when $(l,m,n)=(5,5,3)$ is motivated by the treatment of systems of type D in \S6.

\begin{lemma}\label{lemma10.1} For systems of type G, one has $N(B;\grK)\ll B^7\calL^{-1}$.
\end{lemma}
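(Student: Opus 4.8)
The plan is to adapt the pruning argument of Lemma~\ref{lemma6.4} to the present circumstances, replacing the restricted and smooth exponential sums used there by the classical Weyl sums prescribed in~\S7. Thus I would work throughout with $F(\alp,\bet)=\prod_{i=1}^5f(a_i\alp+b_i\bet)$, $G(\alp)=\prod_{j=1}^5f(c_j\alp)$ and $H(\bet)=\prod_{k=1}^3f(d_k\bet)$, and begin from the bound
\[
N(B;\grK)\le \iint_\grK|F(\alp,\bet)G(\alp)H(\bet)|\d\alp\d\bet ,
\]
which is immediate from $\grK\subseteq\grM\times\grM$.

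The heart of the matter is a H\"older dissection of the integrand modelled on~(\ref{6.5}) and~(\ref{9.1}). I would introduce auxiliary mean values of the shapes
\begin{align*}
U&=\iint_\grK|f(c_j\alp)f(d_k\bet)|^{9/2}\d\alp\d\bet ,\\
V&=\iint_\grK|f(c_j\alp)f(d_k\bet)|^{3+\del}|f(a_i\alp+b_i\bet)|^2\d\alp\d\bet ,
\end{align*}
for suitable ranges of $i,j,k$, together with a weight $\Psi(\alp,\bet)$ formed from powers of the $f(a_i\alp+b_i\bet)$ and of the $f(c_j\alp)$, $f(d_k\bet)$ on independent linear forms. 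The exponents are to be chosen so that, on raising $\Psi$ to a small power and the auxiliary integrals to H\"older weights summing to $1$, each of the thirteen Weyl sums constituting $FGH$ is recovered with total exponent $1$; two of the mixed sums $f(a_i\alp+b_i\bet)$ may, if need be, be bounded trivially by $f(0)\ll B$, just as the $g$-factors in $F$ were in the proof of Lemma~\ref{lemma6.4}.

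Next I would estimate the constituent pieces. For $\sup_{(\alp,\bet)\in\grn}\Psi(\alp,\bet)$ I would appeal to the narrow-minor-arc argument of~\cite[Lemma~10]{BW2007a}, which supplies a bound of the shape $B^{\deg\Psi}Q^{-1/10}$; here it is essential that $\Psi$ retain enough Weyl sums on independent combinations of $\alp$ and $\bet$ for that argument to function. The integrals $U$ separate over $\alp$ and $\bet$ (since $\grK\subseteq\grM\times\grM$) and are controlled by the second bound of Lemma~\ref{lemma7.2}, giving $U\ll(B^{3/2})^2=B^3$ exactly as in the proof of Lemma~\ref{lemma9.1}. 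The integrals $V$ I would bound by holding $\alp$ fixed and integrating first in $\bet$ — the sum $f(a_i\alp+b_i\bet)$ being a Weyl sum in $\bet$ with a fixed shift — whence the first bound of Lemma~\ref{lemma7.2} yields $\int_\grM|f(d_k\bet)^{3+\del}f(a_i\alp+b_i\bet)^2|\d\bet\ll B^{2+\del}$, and then the second bound of Lemma~\ref{lemma7.2}, combined with the trivial estimate $\int_\grM|f(\tet)|^2\d\tet\ll B$, gives $\int_\grM|f(c_j\alp)|^{3+\del}\d\alp\ll B^{1+\del}$, so that $V\ll B^{3+2\del}$; where an eighth-moment factor intervenes one uses $\int_0^1|f(\tet)|^8\d\tet\ll B^{5+\eps}$, available from Boklan's estimate in~(\ref{7.2}) and the major arc bound of Lemma~\ref{lemma7.2}. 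Assembling these within H\"older's inequality, the powers of $B$ should sum to $s-6=7$ while a positive power of $Q^{-1/10}$ survives, giving $N(B;\grK)\ll B^7Q^{-c}$ for some fixed $c>0$; since $Q=L^{1/100}$, this is $\ll B^7\calL^{-1}$, as required.

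The step I expect to be the main obstacle is the bookkeeping in the H\"older dissection, which is complicated by the asymmetry $m=5>n=3$: in contrast with the situation of Lemma~\ref{lemma9.1}, the five sums $f(c_j\alp)$ cannot be matched one-to-one with the three sums $f(d_k\bet)$, so some of the $\bet$-integrations are under-resourced and must be reinforced by a mixed sum $f(a_i\alp+b_i\bet)$; this is precisely the feature that makes systems of type D, and to a lesser extent type G, more delicate than those of types E and F, though here the availability of three sums $f(d_k\bet)$ rather than the two present in type D keeps the whole argument unconditional. One must balance the H\"older exponents so that simultaneously (i) $\Psi$ still admits the $Q^{-1/10}$ saving of~\cite[Lemma~10]{BW2007a}, (ii) every power of $f$ occurring in a constituent integral lies in the range covered by Lemma~\ref{lemma7.2}, and (iii) the total exponent of $B$ comes out to exactly $7$. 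Exhibiting a consistent such choice is the crux; thereafter the estimates are routine.
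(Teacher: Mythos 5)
Your overall strategy — a H\"older dissection of $|FGH|$ into a narrow-arc weight $\Psi$ handled by \cite[Lemma 10]{BW2007a} and a collection of double integrals controlled by Lemma~\ref{lemma7.2}, modelled on Lemmata~\ref{lemma6.4}, \ref{lemma8.1} and \ref{lemma9.1} — is precisely the paper's strategy, and your bound $\ll B^7Q^{-c}\ll B^7\calL^{-1}$ at the end is the correct shape of conclusion. However, you explicitly stop short of exhibiting the H\"older decomposition, writing that ``exhibiting a consistent such choice is the crux; thereafter the estimates are routine.'' That crux is the whole content of the lemma, and it is left undone, so what you have is a plan rather than a proof.

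The paper resolves the asymmetry $m=5>n=3$ that worries you, and it does so without discarding any mixed sums trivially. It takes
$$\Psi(\alp,\bet)=\prod_{i=1}^5\bigl|f(a_i\alp+b_i\bet)^6f(c_i\alp)\bigr|,\qquad U_{ij}=\iint_\grK |f(c_i\alp)|^{9/2}\bigl|f(d_j\bet)^3f(a_i\alp+b_i\bet)^2\bigr|\d\alp\d\bet,$$
indexed over \emph{all} $15$ pairs $(i,j)$ with $1\le i\le 5$, $1\le j\le 3$, and applies H\"older in the form $N(B;\grK)\ll\bigl(\sup_\grn\Psi\bigr)^{1/10}\prod_{i,j}U_{ij}^{1/15}$. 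The bookkeeping then works out automatically: each $f(d_j\bet)$ appears in $5$ of the $15$ integrals, so $\prod U_{ij}^{1/15}$ already gives it the full exponent $5\cdot\tfrac{3}{15}=1$; each $f(c_i\alp)$ and each $f(a_i\alp+b_i\bet)$ appear in only $3$ of the $15$, yielding exponents $3\cdot\tfrac{9/2}{15}=\tfrac{9}{10}$ and $3\cdot\tfrac{2}{15}=\tfrac{2}{5}$, which $\Psi^{1/10}$ tops up by $\tfrac1{10}$ and $\tfrac{3}{5}$ respectively. With $U_{ij}\ll(B^{3/2})(B^2)=B^{7/2}$ from Lemma~\ref{lemma7.2} and $\sup_\grn\Psi\ll B^{35}Q^{-1/10}$, one obtains $N(B;\grK)\ll B^{7/2}\cdot B^{7/2}Q^{-1/100}=B^7Q^{-1/100}$.

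Two specific points in your plan diverge from this and would need repair. First, your auxiliary integral $U=\iint_\grK|f(c_j\alp)f(d_k\bet)|^{9/2}$ contains no mixed sum at all, so the mixed sums must be absorbed entirely by $\Psi$ and $V$; with only three $d_k$'s, the resulting exponent balance is delicate and you have not checked it. Second, your fallback of bounding two mixed sums trivially by $B$ each (by analogy with the two $g$-factors in Lemma~\ref{lemma6.4}) is \emph{not} what happens here: Lemma~\ref{lemma6.4} trivially bounds $g(0)^2$ because $F$ there genuinely contains classical $g$-sums outside the $h$-block, whereas for type~G all five factors of $F$ are $f$'s on independent linear forms and the paper makes full use of all of them. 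Discarding two would leave an $11$-sum integral that must be shown to be $\ll B^5Q^{-c}$, a different and unverified claim. In short: right framework, wrong (or unfinished) decomposition at the decisive step.
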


\begin{proof} Define the mean values
$$U_{ij}=\iint_\grK |f(c_i\alp)|^{9/2}|f(d_j\bet)^3f(a_i\alp+b_i\bet)^2|\d\alp\d\bet$$
and put
$$\Psi(\alp,\bet)=\prod_{i=1}^5|f(a_i\alp+b_i\bet)^6f(c_i\alp)|.$$
Then an application of H\"older's inequality reveals that
\begin{equation}\label{10.1}
N(B;\grK)\ll \Bigl( \sup_{(\alp,\bet)\in\grn}\Psi(\alp,\bet)\Bigr)^{1/10}\prod_{i=1}^5\prod_{j=1}^3U_{ij}^{1/15}.
\end{equation}
The argument of the proof of \cite[Lemma 10]{BW2007a} shows that
$$\sup_{(\alp,\bet)\in\grm}\Psi(\alp,\bet)\ll B^{35}Q^{-1/10}.$$
As a consequence of Lemma \ref{lemma7.2}, on the other hand, one has
\begin{align*}
U_{ij}&\ll \Bigl( \int_\grM |f(c_i\alp)|^{9/2}\d\alp\Bigr) \Bigl(\sup_{\lam\in\dbR}\int_\grM |f(d_j\bet)^3f(b_i\bet+\lam)^2|\d\bet\Bigr) \\
&\ll (B^{3/2})(B^2)=B^{7/2}.
\end{align*}
Combining these estimates within (\ref{10.1}), we conclude that
$$N(B;\grK)\ll (B^{35}Q^{-1/10})^{1/10}B^{7/2}\ll B^7Q^{-1/100}.$$
The conclusion of the lemma now follows.
\end{proof}

We proceed now as in \S4, adopting the notation introduced in the discussion prior to Lemmata \ref{lemma3.2} and \ref{lemma3.3}. For systems of type G, the truncation parameter is fixed to be $T=B^{l-3}L$. Our immediate goal is to derive a bound for the quantity $\Zet_i$ introduced prior to Lemma \ref{lemma4.2}.

\begin{lemma}\label{lemma10.2}
For systems of type G, one has $\Zet_i\ll B^{13/2}L^{\eps-5/2}\quad (i=1,2)$.
\end{lemma}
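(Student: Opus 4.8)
The plan is to imitate the proof of Lemma~\ref{lemma8.2} almost line for line. The decisive new feature is that for systems of type G one has $l=5$, so that $F(\alp,0)=\prod_{i=1}^{5}f(a_i\alp)$ and $F(0,\bet)=\prod_{i=1}^{5}f(b_i\bet)$ are each \emph{products of five} classical Weyl sums; this is what makes the third (sharpest) mean value estimate of Lemma~\ref{lemma7.4} available, in contrast with the situation in \S8 where only the second such estimate could be used. Recall that the truncation parameter is $T=B^{l-3}L=B^2L$.

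First I would record the three auxiliary estimates that take the place of those employed in Lemma~\ref{lemma8.2}. Since $l=5\ge 4$, the argument of the proof of Lemma~\ref{lemma7.3} gives
$$\int_\grM F(\alp,0)e(-\alp u)\d\alp\ll B^{l-3}L^\eps\qquad(u\ne 0),$$
together with the companion bound in which $F(0,\bet)$ replaces $F(\alp,0)$; H\"older's inequality combined with the first estimate of (\ref{7.2}) yields $\int_0^1F(\alp,0)\d\alp\ll B^{l-2}$, and likewise for $F(0,\bet)$; and, noting that $F(\alp,0)=D_5(\alp)$ and $F(0,\bet)=D_5(\bet)$ in the notation preceding Lemma~\ref{lemma7.4}, the third estimate of that lemma with $t=5$ supplies
$$\int_\grm|F(\alp,0)|^2\d\alp\ll B^{2l-7/2}L^{\eps-5/2}=B^{13/2}L^{\eps-5/2},$$
and the same bound with $F(0,\bet)$ in place of $F(\alp,0)$.

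With these estimates in hand I would run the argument of the proof of Lemma~\ref{lemma4.2}. If $u\in\grX$ satisfies $\rho_1(u)>T$, then either $u\ne 0$, in which case the first displayed estimate gives $|\int_\grm F(\alp,0)e(-\alp u)\d\alp|\ge\tfrac12\rho_1(u)$, or $u=0$, in which case $\rho_1(0)^2\ll B^{2l-4}$; summing over $u\in\grX$ and applying Bessel's inequality therefore yields
$$\sum_{\substack{u\in\grX\\ \rho_1(u)>T}}\rho_1(u)^2\ll B^{2l-4}+\int_\grm|F(\alp,0)|^2\d\alp\ll B^{13/2}L^{\eps-5/2},$$
the second term dominating since a fixed positive power of $B$ outgrows every power of $L$. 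Since $(u,v)\in\grX_1$ forces $\rho_2(v)\le T$, one has $\sum_{u\,:\,(u,v)\in\grX_1}\rho(u,v)\le\rho_2(v)\le T$, and hence
$$\Zet_1\le T\sum_{(u,v)\in\grX_1}\rho(u,v)\le T\sum_{\substack{u\in\grX\\ \rho_1(u)>T}}\rho_1(u)\le\sum_{\substack{u\in\grX\\ \rho_1(u)>T}}\rho_1(u)^2\ll B^{13/2}L^{\eps-5/2}.$$
For $i=2$ the inner sum over $u$ vanishes unless $\rho_2(v)>T$, in which event it equals $\rho_2(v)$, so that $\Zet_2=\sum_{v\,:\,\rho_2(v)>T}\rho_2(v)^2$; the estimates above, applied with $F(0,\bet)$ in place of $F(\alp,0)$, then give $\Zet_2\ll B^{13/2}L^{\eps-5/2}$.

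I do not expect a genuine obstacle here; the one point meriting care is the verification that $F(\alp,0)$ and $F(0,\bet)$ are bona fide products of five Weyl sums, so that the third mean value estimate of Lemma~\ref{lemma7.4}, rather than the weaker second estimate used for systems of type E, may legitimately be invoked. This is precisely what produces the saving of $B^{1/2}L^{1/2}$ over the type~E bound of Lemma~\ref{lemma8.2}, and it is available because $l=5$ and the coefficients $a_i$ and $b_i$ are all non-zero by hypothesis.
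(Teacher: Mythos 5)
Your proposal is correct and tracks the paper's own proof essentially line for line: both defer to the argument of Lemma~\ref{lemma8.2} (and hence Lemma~\ref{lemma4.2}), use the separate treatment of the $u=0$ term via $\rho_1(0)^2\ll B^{2l-4}$, and invoke the third estimate of Lemma~\ref{lemma7.4} (available precisely because $l=5$) to obtain $\int_\grm|F(\alp,0)|^2\,\d\alp\ll B^{13/2}L^{\eps-5/2}$ and its companion. The minor reorderings in your chain of inequalities for $\Zet_1$ and $\Zet_2$ are immaterial.
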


\begin{proof} We apply the argument of the proof of Lemma \ref{lemma8.2}, noting that since $l=5$, in this instance Lemma \ref{lemma7.4} delivers the estimates
$$\int_\grm |F(\alp,0)|^2\d\alp \ll B^{2l-7/2}L^{\eps-5/2}\quad \text{and}\quad \int_\grm |F(0,\bet)|^2\d\bet \ll B^{2l-7/2}L^{\eps-5/2}.$$
Thus we obtain
$$\sum_{(u,v)\in \grX_1}\rho(u,v)\ll B^{3-l}L^{-1}((B^{l-2})^2+B^{2l-7/2}L^{\eps-5/2}),$$
and hence
$$\Zet_1\ll B^{l-3}L\sum_{(u,v)\in \grX_1}\rho(u,v)\ll B^{2l-7/2}L^{\eps-5/2}.$$
Also, though more directly,
$$\Zet_2\ll (B^{l-2})^2+B^{2l-7/2}L^{\eps-5/2}.$$
The conclusion of the lemma now follows for $i=1$ and $2$.
\end{proof}

\begin{lemma}\label{lemma10.3} For systems of type G, one has
$$N_0(B;\grM,\grM)-N(B;\grM\times \grM)\ll B^7L^{-1}.$$
\end{lemma}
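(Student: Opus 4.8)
\textbf{Proof proposal for Lemma~\ref{lemma10.3}.}
The plan is to mimic closely the proof of Lemma~\ref{lemma8.3} for systems of type~E, exploiting the additional arithmetic smoothing built into $N_0$ and the improved $\Zet_i$-estimate of Lemma~\ref{lemma10.2}. Recall from \eqref{3.12}, \eqref{3.14}, \eqref{3.15} that
$$N(B;\grM\times \grM)-N_0(B;\grM,\grM)=\sum_{(u,v)\in \grX_1\cup \grX_2}\rho(u,v)R_1(u;\grM)R_2(v;\grM),$$
so the task is to bound this exceptional-set contribution by $O(B^7L^{-1})$. As in Lemma~\ref{lemma4.3}, it is convenient to split off the diagonal term $u=0$ (resp.\ $v=0$), since the trivial estimate $R_1(0;\grM)\ll B^{m-3+\eps}$ from Lemma~\ref{lemma7.3} carries an $\eps$-loss, whereas for $u\ne 0$ one has the stronger $R_1(u;\grM)\ll B^{m-3}L^\eps$. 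Since $m=5$ here, Lemma~\ref{lemma7.3} in fact supplies $R_1(w;\grM)\ll B^{m-3}=B^2$ for \emph{all} integers $w$, which removes the need to treat $u=0$ separately on the $G$-side; on the $H$-side, however, $n=3<4$, so one must instead work with the second moment $\int_0^1|H(\bet)|^2\d\bet$.

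The key input is that, since $n=3$, Hua's lemma gives $\int_0^1|H(\bet)|^2\d\bet=\int_0^1|f(d_1\bet)f(d_2\bet)f(d_3\bet)|^2\d\bet\ll B^{3+\eps}=B^{2n-3+\eps}$; more precisely, applying Lemma~\ref{lemma7.2} on the major arcs and Lemma~\ref{lemma7.4} (with $t=3$) on the minor arcs, one obtains the clean bound $\int_0^1|H(\bet)|^2\d\bet\ll B^{2n-5/2}L^{\eps-3/2}=B^{7/2}L^{\eps-3/2}$. Following the argument of Lemma~\ref{lemma4.3}, set $\Ups_i=\sum_{(u,v)\in \grX_i}\rho(u,v)|R_2(v;\grM)|$; by Bessel's inequality, $\sum_{v\in\grX}|R_2(v;\grM)|^2\le\int_0^1|H(\bet)|^2\d\bet\ll B^{7/2}L^{\eps-3/2}$, whence Cauchy's inequality together with Lemma~\ref{lemma10.2} gives
$$\Ups_i\le \Zet_i^{1/2}\Bigl(\sum_{v\in\grX}|R_2(v;\grM)|^2\Bigr)^{1/2}\ll (B^{13/2}L^{\eps-5/2})^{1/2}(B^{7/2}L^{\eps-3/2})^{1/2}=B^5L^{\eps-2}.$$
Using $R_1(u;\grM)\ll B^2$ for all $u$ (Lemma~\ref{lemma7.3} with $m=5$), we conclude
$$N(B;\grM\times \grM)-N_0(B;\grM,\grM)\ll B^2(\Ups_1+\Ups_2)\ll B^7L^{\eps-2}\ll B^7L^{-1},$$
which is the assertion of the lemma.

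The main obstacle, as always in these arguments, is bookkeeping the powers of $L$: one needs the $\Zet_i$-bound to beat $B^{2l}$ by a genuine power of $L$, and this is exactly what Lemma~\ref{lemma10.2} provides through Boklan's eighth-moment estimate and the sharpened Weyl inequality underlying the third estimate of Lemma~\ref{lemma7.4}. In particular it is essential here that $l=5$, so that $F(\alp,0)$ is a product of five Weyl sums and the $B^{2l-7/2}L^{\eps-5/2}$ minor-arc bound applies; were $l=4$ one would only have $B^{2l-3}L^{\eps-3}$, giving $\Ups_i\ll B^{l+n-3}L^{\eps-3/2}$ and hence the weaker $B^{s-6}L^{\eps-3/2}$, which still suffices but is not what is claimed. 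The diagonal contributions ($u=0$ or $v=0$) are harmless: for $u=0$ one uses $\rho_1(0)\ll B^{l-2}$ from \eqref{8.2} together with $\sum_{v}\rho(0,v)|R_2(v;\grM)|\ll B^{l-2}(\int_0^1|H|^2)^{1/2}\ll B^{l-2}B^{7/4}L^\eps$, and multiplying by $R_1(0;\grM)\ll B^2$ gives a contribution $\ll B^{l+7/4+\eps}\ll B^7L^{-1}$; the case $v=0$ is symmetric and even more favourable. This completes the proof plan.
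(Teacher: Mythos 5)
Your argument follows the paper's proof essentially line for line: the same expansion of $N(B;\grM\times\grM)-N_0(B;\grM,\grM)$ over $\grX_1\cup\grX_2$, the uniform bound $R_1(u;\grM)\ll B^2$ via Lemma~\ref{lemma7.3} using $m=5$, the bound $\int_0^1|H|^2\d\bet\ll B^{7/2}L^{\eps-3/2}$ from Lemma~\ref{lemma7.4}, and the Cauchy step giving $\Ups_i\le\Zet_i^{1/2}(\int_0^1|H|^2)^{1/2}\ll B^5L^{\eps-2}$ with Lemma~\ref{lemma10.2}, concluding $\ll B^7L^{\eps-2}$.

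One small caution: the aside that ``Hua's lemma gives $\int_0^1|H(\bet)|^2\d\bet\ll B^{3+\eps}$'' is false --- that bound for a sixth moment of cubic Weyl sums is only available under HRH (cf.\ Lemma~\ref{lemma6.2}); unconditionally one has only $\int_0^1|f|^6\ll B^{7/2}L^{\eps-3/2}$, which is the \emph{weaker}, not ``more precise,'' estimate, and is what both you and the paper actually use. Likewise Lemma~\ref{lemma7.4} with $t=3$ only furnishes the full-interval bound (the minor-arc refinements there need $t\ge 4$), so there is no major/minor split available for $H$ here. Neither slip affects your main chain of inequalities, which is correct.
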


\begin{proof} In the present situation one has $n=3$, and so Lemma \ref{lemma7.4} yields
\begin{equation}\label{10.2}
\int_0^1|H(\bet)|^2\d\bet \ll B^{7/2}L^{\eps-3/2}.
\end{equation}
Following the argument of the proof of Lemma \ref{lemma4.3}, one obtains
$$\Ups_i\le \Zet_i^{1/2}\Bigl( \int_0^1|H(\bet)|^2\d\bet\Bigr)^{1/2}\ll B^5L^{\eps-2}.$$
But for systems of type G one has $m=5$, and so it follows from Lemma \ref{lemma7.3} that $R_1(u;\grM)\ll B^2$ uniformly in $u$. We therefore conclude that
\begin{align*}
N(B;\grM\times \grM)-N_0(B;\grM,\grM)&\ll \sum_{(u,v)\in \grX_1\cup \grX_2}\rho(u,v)R_1(u;\grM)R_2(v;\grM)\\
&\ll B^2(\Ups_1+\Ups_2)\ll B^7L^{\eps-2},
\end{align*}
and the proof of the lemma is complete.
\end{proof}

\begin{lemma}\label{lemma10.4} For systems of type G, one has $N_0(B;\grm,[0,1))\ll B^7L^{\eps-1}$.
\end{lemma}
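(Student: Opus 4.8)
The plan is to adapt the proof of Lemma \ref{lemma8.4} to the present setting, where $(l,m,n)=(5,5,3)$, so that the only substantive change lies in the minor-arc mean-square estimate for $H(\bet)$. First I would apply Cauchy's inequality to the defining sum for $N_0(B;\grm,[0,1))$ exactly as in the proof of Lemma \ref{lemma4.4}: writing $N_0(B;\grm,[0,1))\le V_1^{1/2}V_2^{1/2}$ with $V_1=\sum_{(u,v)\in\grX_0}\rho(u,v)|R_1(u;\grm)|^2$ and $V_2=\sum_{(u,v)\in\grX_0}\rho(u,v)|R_2(v;[0,1))|^2$. Using the constraint $\rho_1(u)\le T=B^{l-3}L$ on $\grX_0$ and then Bessel's inequality, one gets $V_1\le B^{l-3}L\int_\grm|G(\alp)|^2\d\alp$, and similarly $V_2\le B^{l-3}L\int_0^1|H(\bet)|^2\d\bet$.

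Next I would insert the relevant mean-square bounds. Since $m=5$, the third estimate of Lemma \ref{lemma7.4} applies to $G(\alp)$ and gives $\int_\grm|G(\alp)|^2\d\alp\ll B^{2m-7/2}L^{\eps-5/2}$. For $H(\bet)$ we have $n=3$, so $H$ is a product of only three Weyl sums and the minor-arc estimates of Lemma \ref{lemma7.4} do not apply directly; instead I would use the bound $\int_0^1|H(\bet)|^2\d\bet\ll B^{7/2}L^{\eps-3/2}$ recorded as (\ref{10.2}) in the proof of Lemma \ref{lemma10.3}, which follows from the first estimate of Lemma \ref{lemma7.4} with $t=3$. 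Combining, one obtains
$$N_0(B;\grm,[0,1))\ll B^{l-3}L\,(B^{2m-7/2}L^{\eps-5/2})^{1/2}(B^{7/2}L^{\eps-3/2})^{1/2}.$$
With $l=m=5$ the exponent of $B$ is $(l-3)+(m-7/4)+7/4=2+13/4+7/4=7$, and the power of $L$ is $1-5/4-3/4+\eps=\eps-1$, which yields the claimed bound $N_0(B;\grm,[0,1))\ll B^7L^{\eps-1}$.

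The only point requiring a little care is the bookkeeping of the logarithmic powers: the gain over $B^7$ comes entirely from the factor $L$ contributed by the truncation parameter $T=B^{l-3}L$ being outweighed by the half-powers $L^{-5/4}$ and $L^{-3/4}$ harvested from the two mean-square integrals, so one must be sure to use the sharper form of Lemma \ref{lemma7.4} (the $t=5$ case) for $G$ rather than the weaker $t=4$ bound, which would only give $L^{\eps-1/2}$ overall and still suffice for the statement but not match the displayed exponent. I do not expect any genuine obstacle here; the argument is a routine transcription of the proof of Lemma \ref{lemma8.4}, with the substitution $n=3$ forcing the use of (\ref{10.2}) in place of the type-E estimate (\ref{8.3}) for the mean square of $H$.
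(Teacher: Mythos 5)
Your main argument is correct and is exactly the paper's proof: Cauchy's inequality and the $\grX_0$-constraint reduce $N_0(B;\grm,[0,1))$ to $B^{l-3}L$ times the square roots of $\int_\grm|G|^2$ and $\int_0^1|H|^2$, the former estimated by the $t\ge 5$ case of Lemma \ref{lemma7.4} (valid since $m=5$) and the latter by (\ref{10.2}), giving $B^2L\cdot(B^{13/2}L^{\eps-5/2})^{1/2}(B^{7/2}L^{\eps-3/2})^{1/2}\ll B^7L^{\eps-1}$.

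Your closing aside, however, is off: the second estimate of Lemma \ref{lemma7.4} gives $\int_\grm|G|^2\d\alp\ll B^{2m-3}L^{\eps-3}=B^7L^{\eps-3}$, which carries an extra factor $B^{1/2}$ relative to the third estimate's $B^{13/2}L^{\eps-5/2}$. Feeding that in yields $B^2L\cdot(B^7L^{\eps-3})^{1/2}(B^{7/2}L^{\eps-3/2})^{1/2}\ll B^{29/4}L^{\eps-5/4}$, which is worse by $B^{1/4}$ and does \emph{not} suffice. So the sharper $t\ge5$ bound for $G$ is genuinely required here to reach $B^7$ at all (unlike in the type-E analogue, Lemma \ref{lemma8.4}, where $n\ge4$ allows (\ref{8.3}) to compensate), not merely to fine-tune the power of $L$.
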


\begin{proof} Adapting the argument of the proof of Lemma \ref{lemma4.4} to the present situation, one finds that
$$N_0(B;\grm,[0,1))\ll B^{l-3}L\Bigl( \int_\grm |G(\alp)|^2\d\alp \Bigr)^{1/2}\Bigl( \int_0^1|H(\bet)|^2\d\bet \Bigr)^{1/2}.$$
The first integral on the right hand side may be estimated via Lemma \ref{lemma7.4}, and the second by means of (\ref{10.2}). Thus one obtains
$$N_0(B;\grm,[0,1))\ll B^2L(B^{13/2}L^{\eps-5/2})^{1/2}(B^{7/2}L^{\eps-3/2})^{1/2}\ll B^7L^{\eps-1}.$$
This completes the proof of the lemma.
\end{proof}

\begin{lemma}\label{lemma10.5} For systems of type G, one has $N_0(B;\grM,\grm)\ll B^7L^{-1}$.
\end{lemma}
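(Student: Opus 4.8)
The plan is to adapt the two-step scheme of the proof of Lemma~\ref{lemma6.8}; the Cauchy--Bessel route used for systems of type~E in the proof of Lemma~\ref{lemma8.5} is not available here, since with $n=3$ the sharpest mean square estimate for $H(\bet)$ over $\grm$ is the one recorded in (\ref{10.2}), and this is too weak to absorb the loss incurred in decoupling the $\alp$- and $\bet$-variables. Thus I would first bound $N(B;\grM\times\grm)-N_0(B;\grM,\grm)$ by means of the exceptional-set estimate of Lemma~\ref{lemma10.2}, and then estimate $N(B;\grM\times\grm)$ directly.

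For the first step I would argue as in the opening of the proof of Lemma~\ref{lemma6.8}. Since $m=5$, Lemma~\ref{lemma7.3} gives the uniform bound $R_1(u;\grM)\ll B^2$, while Bessel's inequality and (\ref{10.2}) give $\sum_{v\in\grX}|R_2(v;\grm)|^2\le\int_\grm|H(\bet)|^2\d\bet\ll B^{7/2}L^{\eps-3/2}$. Combining these via Cauchy's inequality with the bound $\Zet_i\ll B^{13/2}L^{\eps-5/2}$ of Lemma~\ref{lemma10.2}, one obtains
\[
N(B;\grM\times\grm)-N_0(B;\grM,\grm)=\!\!\sum_{(u,v)\in\grX_1\cup\grX_2}\!\!\rho(u,v)R_1(u;\grM)R_2(v;\grm)\ll B^2(\Zet_1+\Zet_2)^{1/2}(B^{7/2}L^{\eps-3/2})^{1/2}\ll B^7L^{\eps-2},
\]
which is $\ll B^7L^{-1}$.

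For the second step I would write $N(B;\grM\times\grm)=\int_\grM G(\alp)\bigl(\int_\grm F(\alp,\bet)H(\bet)\d\bet\bigr)\d\alp$ and estimate the inner integral uniformly in $\alp$. Recalling that for systems of type~G one has $F(\alp,\bet)=\prod_{i=1}^5f(a_i\alp+b_i\bet)$ and $H(\bet)=\prod_{k=1}^3f(d_k\bet)$, H\"older's inequality with eight equal exponents gives
\[
\Bigl|\int_\grm F(\alp,\bet)H(\bet)\d\bet\Bigr|\le\prod_{i=1}^5\Bigl(\int_\grm|f(a_i\alp+b_i\bet)|^8\d\bet\Bigr)^{1/8}\prod_{k=1}^3\Bigl(\int_\grm|f(d_k\bet)|^8\d\bet\Bigr)^{1/8}.
\]
For the five factors arising from $F$, a change of variable yields $\int_\grm|f(a_i\alp+b_i\bet)|^8\d\bet\le\int_0^1|f(\tet)|^8\d\tet\ll B^5$, the last step following from Lemma~\ref{lemma7.2} (with $\del=4$) on $\grM$ together with the second estimate of (\ref{7.2}) on $\grm$. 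For the three factors arising from $H$ one has $\int_\grm|f(d_k\bet)|^8\d\bet\ll B^5L^{\eps-3}$ by the second estimate of (\ref{7.2}), exactly as in the proof of Lemma~\ref{lemma7.4}; this is the crucial input. Hence $\int_\grm F(\alp,\bet)H(\bet)\d\bet\ll B^5L^{\eps-9/8}$ uniformly in $\alp$, and since $m=5$ gives $\int_\grM|G(\alp)|\d\alp\ll B^2$ via H\"older's inequality and Lemma~\ref{lemma7.2} (with $\del=1$), we conclude that $N(B;\grM\times\grm)\ll B^7L^{\eps-9/8}\ll B^7L^{-1}$. Combining the two steps completes the proof.

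The delicate point is the second step: one needs a saving in $\int_\grm F(\alp,\bet)H(\bet)\d\bet$ of a power of $\log B$ strictly exceeding $1$. A cruder treatment --- removing two factors by Weyl's inequality and handling the remaining six by the sixth moment $\int_0^1|f(\tet)|^6\d\tet\ll B^{7/2}L^{\eps-3/2}$ --- delivers only $B^5L^{\eps-1}$, hence only $N(B;\grM\times\grm)\ll B^7L^{\eps-1}$, which is not quite $\ll B^7L^{-1}$. The remedy is to observe that all three $d_k$-factors of $H(\bet)$ may be assigned the full minor-arc eighth moment of Boklan, whose logarithmic gains combine to $3\cdot\tfrac38=\tfrac98>1$.
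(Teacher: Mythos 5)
Your proposal is correct and follows essentially the same two-step path as the paper: first you estimate $N(B;\grM\times\grm)-N_0(B;\grM,\grm)$ exactly as the paper does, using Lemma~\ref{lemma7.3}, Lemma~\ref{lemma10.2} and (\ref{10.2}) to obtain $\ll B^7L^{\eps-2}$; then you bound $N(B;\grM\times\grm)$ by applying H\"older with eight equal exponents to the inner $\bet$-integral, sending the three $d_k$-factors to Boklan's minor-arc eighth moment $(B^5L^{\eps-3})^{3/8}$ and the five $a_i\alp+b_i\bet$-factors to the full eighth moment $(B^5)^{5/8}$, giving $B^5L^{\eps-9/8}$ uniformly in $\alp$, and finally multiplying by $\int_\grM|G(\alp)|\d\alp\ll B^2$. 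This matches the paper's argument line for line; your closing remark identifying the $3\cdot\tfrac{3}{8}=\tfrac{9}{8}>1$ logarithmic saving as the decisive feature is an accurate diagnosis of where the estimate is tight.
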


\begin{proof} First, adapting the argument of the proof of Lemma \ref{lemma4.5}, we infer from Lemma \ref{lemma7.3} that
\begin{align*}N(B;\grM\times \grm)-N_0(B;\grM,\grm)&=\sum_{(u,v)\in \grX_1\cup \grX_2}\rho(u,v)R_1(u;\grM)R_2(v;\grm)\\
&\ll B^2(\Zet_1+\Zet_2)^{1/2}\Bigl( \int_0^1|H(\bet)|^2\d\bet\Bigr)^{1/2}.
\end{align*}
Consequently, from (\ref{10.2}) and Lemma \ref{lemma10.2}, one obtains
\begin{align}
N(B;\grM\times \grm)-N_0(B;\grM,\grm)&\ll B^2(B^{13/2}L^{\eps-5/2})^{1/2}(B^{7/2}L^{\eps-3/2})^{1/2}\notag \\
&=B^7L^{\eps-2}.\label{10.3}
\end{align}

We next estimate $N(B;\grM\times \grm)$, observing that an application of H\"older's inequality together with (\ref{7.2}) and \cite[Theorem 2]{Vau1986} yields
\begin{align*}
\int_\grm F(\alp,\bet)H(\bet)\d\bet &\ll \prod_{k=1}^3\Bigl( \int_\grm |f(d_k\bet)|^8\d\bet \Bigr)^{1/8}\prod_{i=1}^5\Bigl( \int_0^1|f(a_i\alp+b_i\bet)|^8\d\bet\Bigr)^{1/8}\\
&\ll (B^5L^{\eps-3})^{3/8}(B^5)^{5/8}\ll B^5L^{\eps-9/8}.
\end{align*}
Thus, one deduces from Lemma \ref{lemma7.3} that
$$N(B;\grM\times \grm)\ll B^5L^{\eps-9/8}\int_\grM |G(\alp)|\d\alp \ll B^7L^{-1}.$$
The conclusion of the lemma now follows by reference to (\ref{10.3}).
\end{proof}

The proof of Theorem \ref{theorem1.2} for systems of type G follows by combining (\ref{7.1}) with Lemmata \ref{lemma10.1}, \ref{lemma10.3}, \ref{lemma10.4} and \ref{lemma10.5}, just as in the analogous argument completing the analysis of \S4, and so we arrive at the lower bound
$$N(B)\ge N(B;\grN)+O(B^7\calL^{-1})=\calC B^7+O(B^7\calL^{-1}).$$

\section{Further applications} The key feature of the systems amenable to our methods is a block structure. Our methods make possible the analysis of Diophantine systems of the shape
$$\left.\begin{aligned}
&\phi(x_1,\ldots ,x_l)+\psi(y_1,\ldots,y_m)&&=0,\\
&\chi(x_1,\ldots ,x_l)&+\ome(z_1,\ldots ,z_n)&=0,
\end{aligned}\, \right\}$$
for homogeneous polynomials $\phi$, $\psi$, $\chi$, $\ome$ of degree $d$, provided that $l$, $m$, $n$ are suitably large. The simplest situations to describe are those wherein one has non-trivial minor arc estimates in mean square for each of the polynomials $\phi$, $\psi$, $\chi$, $\ome$. Such is the case, for example, when these polynomials are suitably non-singular forms in a number of variables exceeding $(d-1)2^{d-1}$, as a consequence of the work of Birch \cite{Bir1961}, and also when these polynomials are diagonal forms of degree $d$ in $d^2$ variables (see \cite{Woo2011a, Woo2011b}). In the latter case, moreover, if one restricts the variables to be smooth then one can reduce the number of variables required to $\frac{1}{2}d(\log d+\log \log d+O(1))$ (see the methods of \cite{Woo1992, Woo1995a}).\par

It may be worthwhile to be more specific concerning the diagonal examples alluded to above. Consider then the Diophantine system
\begin{equation}\label{11.1}
\left.\begin{aligned}
&a_1x_1^d+\ldots +a_lx_l^d+c_1y_1^d+\ldots +c_my_m^d&&=0,\\
&b_1x_1^d+\ldots +b_lx_l^d&+d_1z_1^d+\ldots +d_nz_n^d&=0,
\end{aligned}\, \right\}
\end{equation}
wherein $l$, $m$, $n$ are each at least $\frac{1}{2}d(\log d+\log \log d+O(1))$. Also, let $N(B)$ denote the number of integral solutions of (\ref{11.1}) with $|x_i|,|y_i|,|z_i|\le B$. Provided that the system (\ref{11.1}) admits non-singular real and $p$-adic solutions for each prime number $p$, then one may prove via our methods that $N(B)\gg B^{s-2d}$, where $s=l+m+n$. Such systems, then, are accessible to our methods when $s\ge (\frac{3}{2}+o(1))d\log d$, previous approaches being applicable only for $s\ge (2+o(1))d\log d$. With $\calC$ defined to be the product of local densities associated with the system (\ref{11.1}), on the other hand, one may obtain the lower bound $N(B)\ge (\calC+o(1))B^{s-2d}$ whenever $s\ge 3d^2$. Hitherto, such a conclusion would be available only for $s\ge 4d^2$ or thereabouts.\par

We finish by noting that at the cost of additional complications our methods may be generalised so as to be applicable to systems of three or more equations. Thus, a system of $r$ equations partitioned appropriately into $r+1$ blocks may be successfully analysed by recourse to higher moment estimates along the lines contained in our previous work \cite{BW2010a}. The conditions that must be imposed on the number of variables comprising each block become progressively more complicated to analyse as $r$ increases. When the number of blocks exceeds $r+1$, on the other hand, although inspiration may be drawn from the investigations of this paper, it seems fair to comment that the situation remains highly experimental.

\bibliographystyle{amsbracket}
\providecommand{\bysame}{\leavevmode\hbox to3em{\hrulefill}\thinspace}

\end{document}